\documentclass{amsart}
\usepackage[all]{xy}
\usepackage{verbatim}
\usepackage{color}
\usepackage{amsthm}
\usepackage{amssymb}
\usepackage[colorlinks=true]{hyperref}
%\usepackage[square, comma, sort&compress]{natbib}

%\newcommand{\Goncalo}[1]{\textcolor{blue}{#1}}

%\long\def\Michel#1{{\color{red}#1}}

%\numberwithin{equation}{sub}

%\swapnumbers

\setcounter{equation}{0}

\numberwithin{equation}{section}

\newtheorem{theorem}[equation]{Theorem}
\newtheorem*{theorem*}{Theorem} \newtheorem{lemma}[equation]{Lemma}

\newtheorem*{conjecture*}{Mamma Conjecture}
\newtheorem*{conjecture1*}{Mamma Conjecture (revisited)}
\newtheorem{proposition}[equation]{Proposition}
\newtheorem{corollary}[equation]{Corollary}
\newtheorem*{corollary*}{Corollary}

\theoremstyle{remark}
\newtheorem{definition}[equation]{Definition}

\newtheorem{example}[equation]{Example}

\newtheorem{notation}[equation]{Notation}

\theoremstyle{remark}
\newtheorem{remark}[equation]{Remark}

\setcounter{tocdepth}{1}

\newcommand{\cA}{{\mathcal A}}
\newcommand{\cB}{{\mathcal B}}
\newcommand{\cC}{{\mathcal C}}
\newcommand{\cD}{{\mathcal D}}

\newcommand{\cF}{{\mathcal F}}

\newcommand{\cL}{{\mathcal L}}

\newcommand{\cN}{{\mathcal N}}
\newcommand{\cO}{{\mathcal O}}
\newcommand{\cP}{{\mathcal P}}

\newcommand{\cW}{{\mathcal W}}
\newcommand{\cX}{{\mathcal X}}

\newcommand{\cZ}{{\mathcal Z}}

\newcommand{\RHom}{{\mathrm{RHom}}}

% Spectra

\newcommand{\bbF}{\mathbb{F}}
\newcommand{\bbG}{\mathbb{G}}

\newcommand{\bbN}{\mathbb{N}}

\newcommand{\bbQ}{\mathbb{Q}}
\newcommand{\bbZ}{\mathbb{Z}}

\DeclareMathOperator{\SmProj}{SmProj} % Smooth projective varieties
 % Smooth projective varieties
 % Smooth projective varieties
 % Correspondences

\DeclareMathOperator{\Id}{Id}
\DeclareMathOperator{\id}{id}

\DeclareMathOperator{\NChow}{NChow} % category of noncommutative Chow motives
\DeclareMathOperator{\NNum}{NNum} % category of noncommutative numerical motives
\DeclareMathOperator{\CSep}{CSep}
\DeclareMathOperator{\Sep}{Sep}
\DeclareMathOperator{\CSA}{CSA} 
\DeclareMathOperator{\AM}{AM} 
\DeclareMathOperator{\Cov}{Cov} 
\DeclareMathOperator{\Heck}{Heck} 
\DeclareMathOperator{\Hecke}{Hecke} 
\DeclareMathOperator{\Perm}{Perm} 
\DeclareMathOperator{\Etale}{Etale} 
%\DeclareMathOperator{\Chow}{Chow} % category of noncommutative homological motives
%\DeclareMathOperator{\dgcat}{dgcat} % category of dg categories
%\DeclareMathOperator{\perf}{perf} % category of dg categories
 % category of Chow motives
 % category of numerical motives

\DeclareMathOperator{\Ch}{Ch}
 % Kontsevich's category of noncommutative motives
 % Bootstrap category
\DeclareMathOperator{\Fun}{Fun} % Functor category
 % Bivariant algebraic K-theory

 %rational equivalence
 %homological equivalence
 % numerical equivalence 

 % numerical equivalence 
 % numerical equivalence 
 % codimension 
\newcommand{\dgcat}{\mathrm{dgcat}} % codimension 
 % codimension 
 % codimension 

%\newcommand{\dgcat}{\mathsf{dgcat}}

\newcommand{\perf}{\mathrm{perf}}

\newcommand{\Chow}{\mathrm{Chow}}

\newcommand{\dg}{\mathrm{dg}}

\newcommand{\Hom}{\mathrm{Hom}}
\newcommand{\End}{\mathrm{End}}

\newcommand{\rep}{\mathrm{rep}}

\newcommand{\Hmo}{\mathrm{Hmo}}% Morita homotopy theory
\newcommand{\op}{\mathrm{op}}

\newcommand{\Map}{\mathrm{Map}}

\newcommand{\too}{\longrightarrow}

% additivity

\newcommand{\REnd}{\mathbf{R}\mathrm{End}}

\newcommand{\ie}{\textsl{i.e.}\ }
\newcommand{\eg}{\textsl{e.g.}}

\def\Perm{\operatorname{Perm}}
\def\Cov{\operatorname{Cov}}
\def\CSep{\operatorname{CSep}}
\def\Sep{\operatorname{Sep}}

\def\ind{\operatorname{ind}}

\def\Br{\operatorname{Br}}
\def\dBr{\operatorname{dBr}}
\def\r{\rightarrow}

\let\oldmarginpar\marginpar
\def\marginpar#1{\oldmarginpar{\tiny #1}}
\def\DPic{\operatorname{DPic}}
\def\Pic{\operatorname{Pic}}
\def\pr{\operatorname{pr}}

\def\cosk{\operatorname{cosk}}

\def\Spec{\operatorname{Spec}}

\begin{document}

\title[Noncommutative motives of separable algebras]{Noncommutative motives of separable algebras}
\author{Gon{\c c}alo~Tabuada and Michel Van den Bergh}

\address{Gon{\c c}alo Tabuada, Department of Mathematics, MIT, Cambridge, MA 02139, USA}
\email{tabuada@math.mit.edu}
\urladdr{http://math.mit.edu/~tabuada}
\thanks{G.~Tabuada was partially supported by a NSF CAREER Award.}

\address{Michel Van den Bergh, Departement WNI, Universiteit Hasselt, 3590 Diepenbeek, Belgium}
\email{michel.vandenbergh@uhasselt.be} 
\urladdr{http://hardy.uhasselt.be/personal/vdbergh/Members/~michelid.html}
\thanks{M.~Van den Bergh is a Director of Research at the FWO Flanders}

\subjclass[2000]{16H05, 16K50, 14M15, 18D20, 20C08}
\date{\today}

\keywords{Noncommutative motives, separable algebra, Brauer group, twisted flag variety, Hecke algebra, convolution, cyclic sieving phenomenon, dg Azumaya algebra.}

\abstract{In this article we study in detail the category of
  noncommutative motives of separable algebras $\mathrm{Sep}(k)$ over
  a base field $k$. We start by constructing four different models of
  the full subcategory of commutative separable algebras
  $\mathrm{CSep}(k)$. Making use of these models, we then explain how the category $\mathrm{Sep}(k)$
  can be described as a ``fibered $\bbZ$-order'' over
  $\mathrm{CSep}(k)$. This viewpoint leads to several computations and
  structural properties of the category $\mathrm{Sep}(k)$. For
  example, we obtain a complete dictionary between directs sums of
  noncommutative motives of central simple algebras (=CSA) and
  sequences of elements in the Brauer group of $k$. As a first
  application, we establish two families of motivic relations between
  CSA which hold for every additive invariant (\eg\ algebraic
  $K$-theory, cyclic homology, and topological Hochschild
  homology). As a second application, we compute the additive
  invariants of twisted flag varieties using solely the Brauer classes
  of the corresponding CSA. Along the way, we categorify the cyclic
  sieving phenomenon and compute the (rational) noncommutative motives
  of purely inseparable field extensions and of dg Azumaya algebras.}
}

\maketitle
\vskip-\baselineskip
\vskip-\baselineskip
%\vskip-\baselineskip
%\tableofcontents

%\bigskip

%\medskip

%-------------------------------------------------------------------------------
\section{Introduction}
%-------------------------------------------------------------------------------
%-------------------------------------------------------------------------------
\subsection*{Noncommutative motives}
%-------------------------------------------------------------------------------
A {\em dg category} $\cA$, over a base field $k$, is a category enriched over complexes of $k$-vector spaces; see \S\ref{sec:dg}. Every (dg) $k$-algebra $A$ naturally gives rise
to a dg category with a single object. Another source of examples is provided by schemes since the category of perfect complexes $\perf(X)$ of every quasi-compact quasi-separated $k$-scheme $X$ admits a canonical dg enhancement $\perf_\dg(X)$. In what follows, $\dgcat(k)$ denotes the category of dg~categories.%\footnote{When $X$ is quasi-projective this dg enhancement is unique; see Lunts-Orlov \cite[Thm.~2.12]{LO}.} $\perf_\dg(X)$; see Keller \cite[\S4.6]{ICM-Keller}. 

Invariants such as algebraic $K$-theory, cyclic homology, and topological Hochschild homology, extend naturally from $k$-algebras to dg categories. In order to study them simultaneously the notion of {\em additive invariant} was introduced in \cite{Additive} and the {\em universal additive invariant} $U:\dgcat(k) \to \Hmo_0(k)$ was constructed; consult \S\ref{sub:additive}-\ref{sub:universal} for details. Given any additive category $\mathrm{D}$, there is an induced equivalence
\begin{equation}\label{eq:categories}
U^\ast: \Fun(\Hmo_0(k),\mathrm{D}) \stackrel{\simeq}{\too} \Fun_{\mathrm{add}}(\dgcat(k),\mathrm{D})\,,
\end{equation}
where the left-hand-side denotes the category of additive functors and the right-hand-side the category of additive invariants. Because of this universal property, which is reminiscent from the yoga of motives, $\Hmo_0(k)$ is called the category of {\em noncommutative motives}. The tensor product of $k$-algebras extends also naturally to dg categories. It gives rise to a symmetric monoidal structure on $\dgcat(k)$ which descends to $\Hmo_0(k)$ making the functor $U$ symmetric monoidal.

Following Kontsevich \cite{IAS,Miami,finMot}, a dg category $\cA$ is called {\em smooth} if it is perfect as a bimodule over itself and {\em proper} if $\sum_i \mathrm{dim} \,H^i \cA(x,y)< \infty$ for every ordered pair of objects $(x,y)$. Examples include the finite dimensional $k$-algebras of finite global dimension (when $k$ is perfect) and the dg categories $\perf_\dg(X)$ associated to smooth projective $k$-schemes $X$. The category of {\em noncommutative Chow motives $\NChow(k)$} was introduced in \cite{CvsNC} as the idempotent completion of the full subcategory of $\Hmo_0(k)$ consisting of the smooth proper dg categories. By construction, $\NChow(k)$ is additive and rigid symmetric monoidal; consult the survey \cite[\S4]{Buenos}.
%-------------------------------------------------------------------------------
\subsection*{Motivating goal}
%-------------------------------------------------------------------------------
Given an additive rigid symmetric monoidal category $\cC$, its $\otimes$-ideal $\cN$ is defined by the following formula
$$ \cN(a,b):=\{f:a \to b\,|\, \forall \,\,g:b \to a \,\,\,\text{we}\,\,\text{have}\,\,\, \mathrm{tr}(g \circ f) =0\}\,,$$
where $\mathrm{tr}$ stands for the categorical trace. Motivated by Andr{\'e}-Kahn's description of the category of numerical motives (see \cite[Example~7.1.2]{AK}), the category of {\em noncommutative numerical motives $\NNum(k)$} was introduced in \cite{Semisimple} as the idempotent completion of the quotient category $\NChow(k)/\cN$. Our first result is the following:
\begin{theorem}\label{thm:motivation}
When $k$ is of characteristic zero, the Hom-groups of the additive category $\NNum(k)$ are finitely generated abelian groups.
\end{theorem}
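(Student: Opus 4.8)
The plan is to realize each Hom-group of $\NNum(k)$ as a subgroup of a finitely generated free abelian group $\bbZ^d$: the embedding will come from the non-degeneracy of the trace pairing that defines $\NNum(k)$, and the finiteness of $d$ is where the hypothesis $\mathrm{char}(k)=0$ genuinely enters.

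First I would reduce to Hom-groups between smooth proper dg categories. Since $\NNum(k)$ is an idempotent completion, every object is a direct summand of the image of some smooth proper dg category, so each Hom-group of $\NNum(k)$ is an abelian-group direct summand of one of the form $\Hom_{\NNum(k)}(\cA,\cB)$ with $\cA,\cB$ smooth proper; as a direct summand of a finitely generated abelian group is finitely generated, it suffices to treat those. The point of passing all the way to $\NNum(k)$ is that the analogous $\NChow(k)$-groups need not be finitely generated: already for $\cA=\perf_\dg(X)$ with $X$ a smooth projective curve of positive genus one has $\Hom_{\NChow(k)}(\mathbf 1,\cA)\cong K_0(X)$, which is typically not finitely generated (it contains $\Pic^0(X)$).

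Set $G:=\Hom_{\NChow(k)}(\cA,\cB)$ and $G':=\Hom_{\NChow(k)}(\cB,\cA)$. By construction $\Hom_{\NNum(k)}(\cA,\cB)$ is the quotient of $G$ by the left radical of the bilinear form $\langle f,g\rangle:=\mathrm{tr}(g\circ f)\in\End_{\NChow(k)}(\mathbf 1)\cong K_0(k)\cong\bbZ$ on $G\times G'$; by cyclicity of the categorical trace this form is, up to swapping the factors, the one defining $\Hom_{\NNum(k)}(\cB,\cA)$ from $G'$, so the resulting $\bbZ$-valued form on $\Hom_{\NNum(k)}(\cA,\cB)\times\Hom_{\NNum(k)}(\cB,\cA)$ is non-degenerate on both sides; in particular $\Hom_{\NNum(k)}(\cA,\cB)$ is torsion-free. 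The characteristic-zero input is that the rationalized Hom-groups of $\NNum(k)_\bbQ$ are finite-dimensional --- part of what makes $\NNum(k)_\bbQ$ abelian semisimple, cf.\ \cite{Semisimple}; concretely, in characteristic zero periodic cyclic homology descends to a symmetric monoidal functor on $\NChow(k)_\bbQ$ with finite-dimensional values whose kernel ideal is contained in $\cN$, since symmetric monoidal functors preserve traces and $\bbQ\hookrightarrow k$. As $\cN$ is cut out by the trace pairing and rationalization is exact, $\Hom_{\NNum(k)}(\cA,\cB)\otimes_\bbZ\bbQ\cong\Hom_{\NNum(k)_\bbQ}(\cA,\cB)$; write $d$ for its dimension, noting that $\Hom_{\NNum(k)_\bbQ}(\cB,\cA)$ then has dimension $d$ as well.

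To finish, pick $g_1,\dots,g_d\in\Hom_{\NNum(k)}(\cB,\cA)$ whose classes are a $\bbQ$-basis of $\Hom_{\NNum(k)_\bbQ}(\cB,\cA)$ and form $\Phi\colon\Hom_{\NNum(k)}(\cA,\cB)\to\bbZ^d$, $f\mapsto(\langle f,g_1\rangle,\dots,\langle f,g_d\rangle)$. An $f$ in the kernel pairs trivially with each $g_i$, hence --- since every element of $\Hom_{\NNum(k)}(\cB,\cA)$ has a nonzero integral multiple lying in the subgroup generated by the $g_i$ (these spanning after rationalization) and the pairing takes values in $\bbZ$ --- with all of $\Hom_{\NNum(k)}(\cB,\cA)$, so $f=0$ by left non-degeneracy. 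Thus $\Phi$ realizes $\Hom_{\NNum(k)}(\cA,\cB)$ as a subgroup of $\bbZ^d$, which makes it free of finite rank. The only non-formal step is the finite-dimensionality of the rationalized Hom-groups --- it plays exactly the role of Kleiman's finiteness theorem behind the finite generation of cycles modulo numerical equivalence in Grothendieck's theory --- and I expect that, should one prefer not to quote \cite{Semisimple}, the real work would be to establish the Künneth isomorphism and strict monoidality needed to promote periodic cyclic homology to the required symmetric monoidal functor on $\NChow(k)_\bbQ$.
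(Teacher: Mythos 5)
Your argument is correct, and it rests on the same two pillars as the paper's proof: the $\bbZ$-valued trace pairing, which is non-degenerate on numerical Hom-groups by the very definition of $\cN$, and the finite-dimensionality of the rationalized Hom-groups coming from the semisimplicity of $\NNum(k)_\bbQ$ in characteristic zero \cite[Thm.~1.10]{Semisimple}. The packaging, however, is genuinely different. The paper first uses rigidity to reduce to groups of the form $\Hom_{\NNum(k)}(U(k),U(\cA))$, identifies these with $K_0(\cA)/_{\!\mathrm{Ker}(\chi)}$ for the Euler form $\chi$ via \cite{Kontsevich} (this is \eqref{eq:description-key}), invokes \cite{Galois} for the compatibility of $\cN$ with change of coefficients, and concludes by embedding $K_0(\cA)/_{\!\mathrm{Ker}(\chi)}$ into its $\bbZ$-linear dual. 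You instead work directly with the abstract trace pairing between $\Hom_{\NNum(k)}(\cA,\cB)$ and $\Hom_{\NNum(k)}(\cB,\cA)$ (no rigidity reduction and no explicit $K_0$-description), verify the compatibility of $\cN$ with rationalization by hand by clearing denominators in the $\bbZ$-valued pairing rather than citing \cite{Galois}, and embed into $\bbZ^d$ by pairing against integral lifts of a $\bbQ$-basis of the opposite Hom-group, correctly using the torsion-freeness you extract from non-degeneracy (or, equivalently, that torsion elements pair to zero in $\bbZ$). What your route buys is self-containedness: the only external input is the semisimplicity theorem of \cite{Semisimple} (your remark that one could instead use periodic cyclic homology is exactly how that theorem is proved, so it is not a shortcut). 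What the paper's route buys is the concrete identification of the numerical Hom-groups with $K_0$ modulo the kernel of the Euler form, which is of independent interest beyond the finiteness statement.
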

Intuitively speaking, Theorem \ref{thm:motivation} shows that the category $\NNum(k)$ encode only a finite amount of data. This motivates the following ambitious goal:

\vspace{0.1cm}

{\it Goal: Construct a simple and explicit model of the category $\NNum(k)$.}

\vspace{0.1cm}

Since $\NNum(k)$ contains information about {\em all} smooth proper dg categories, the above goal seems completely out of reach at
the present time. In this article we give the first step towards its
solution by addressing the case of the full subcategory of separable
algebras. Already in this case some surprisingly interesting phenomena
occur! For example, the latter category is strongly related with the classical theory of $\bbZ$-orders; see Proposition \ref{prop:ring}. For a number of applications, please consult \S\ref{sec:applications}.  
%-------------------------------------------------------------------------------
\section{Statement of results}
%-------------------------------------------------------------------------------
Throughout the article, except in the above Theorem \ref{thm:motivation}, $k$ will be a field of arbitrary
characteristic. Unless stated differently, all tensor products will be
taken over $k$. Let $G:=\mathrm{Gal}(k_{\mathrm{sep}}/k)$ be the
absolute Galois group of $k$, equipped with its profinite
  topology. Given a central simple $k$-algebra $A$, we will write $\mathrm{deg}(A)$ for its degree, $\mathrm{ind}(A)$ for its index, $\mathrm{per}(A)$ for its period, and
$[A]$ for its class in the Brauer group $\mathrm{Br}(k)$ of $k$.
%-------------------------------------------------------------------------------
\subsection*{Commutative separable algebras}
%-------------------------------------------------------------------------------
Recall from \cite[\S III Prop.~4.1]{Knus} that a commutative $k$-algebra $A$ is separable if and only if it is isomorphic to a finite product of finite separable field extensions of $k$. Thanks to \cite[\S III Thm.~1.4(1) and Prop.~3.2]{Knus}, every (commutative) separable $k$-algebra is smooth and proper as a dg category. Let us then denote by $\CSep(k)$ the full subcategory of $\NChow(k)$ consisting of the objects $U(A)$ with $A$ a commutative separable $k$-algebra. As mentioned in \S\ref{sub:universal}, $U(\cA) \oplus U(\cB) \simeq U(\cA \times \cB)$ for all dg categories $\cA$ and $\cB$. This implies that the category $\CSep(k)$ is additive. Since (commutative) separable $k$-algebras are stable under tensor product (see \cite[\S III Prop.~1.7]{Knus}) and the universal additive invariant $U$ is symmetric monoidal, $\CSep(k)$ is moreover symmetric monoidal. The following result is a special case of Proposition \ref{prop:properties2} stated below.
\begin{proposition}\label{prop:properties} 
The quotient functor $\CSep(k) \to\NNum(k)$ is fully-faithful.
\end{proposition}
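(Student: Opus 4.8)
The plan is to show that the quotient functor $\CSep(k) \to \NNum(k)$ is full and faithful by controlling the $\otimes$-ideal $\cN$ on the subcategory $\CSep(k)$, and for this I would first reduce to understanding $\CSep(k)$ as concretely as possible. Every commutative separable $k$-algebra is a finite product of finite separable field extensions $L_i/k$, and since $U$ sends finite products to direct sums, $\CSep(k)$ is the additive category generated by the motives $U(L)$ with $L/k$ finite separable. The first step is therefore to compute the Hom-groups $\Hom_{\NChow(k)}(U(L), U(L'))$ for finite separable extensions $L, L'$. Using the standard description of Hom-groups in $\NChow(k)$ as $K_0$ of the relevant bimodule category — equivalently $K_0(L \otimes_k L'^{\op}) = K_0(L \otimes_k L')$ since these are commutative — one identifies this with the free abelian group on the (finite) set of maximal ideals of $L \otimes_k L'$, i.e.\ on the set of $G$-orbits of pairs of embeddings, by Galois descent. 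In particular all these Hom-groups are free of finite rank, and composition is given by an explicit ``span''/correspondence formula counting intermediate primes.

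Next I would compute the categorical trace on $\CSep(k)$. For a finite separable field extension $L/k$, the object $U(L)$ is dualizable with $U(L)^\vee \simeq U(L)$ (separable algebras are symmetric Frobenius, or one invokes that $U(\perf_\dg X)$ is self-dual for $X$ smooth proper of dimension $0$), and the evaluation/coevaluation are given by the trace form $L \otimes_k L \to k$ and its dual. Concretely, an endomorphism $f$ of $U(L)$, represented by a class in $K_0(L\otimes_k L)$ — a $\bbZ$-linear combination of the residue fields $L_{\mathfrak p}$ at the primes $\mathfrak p$ of $L\otimes_k L$ — has categorical trace equal to $\sum_{\mathfrak p} n_{\mathfrak p} \cdot [L_{\mathfrak p}:k]/[L:k]$, or more precisely a $\bbZ$-valued pairing; the key point is that the trace pairing $\Hom(U(L),U(L')) \otimes \Hom(U(L'),U(L)) \to \bbZ$ is \emph{non-degenerate over $\bbZ$} because it is (a rescaling of) the standard intersection pairing on the finite $G$-set of embeddings, which is visibly unimodular on the basis of orbits up to multiplication by orbit sizes — hence degenerate only in the sense of having elementary divisors, but not in the sense of having a nonzero radical. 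More care is needed here: $\cN$ is the radical of this pairing, so I must argue the radical is zero on all Hom-groups of $\CSep(k)$.

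Granting that $\cN$ restricted to $\CSep(k)$ vanishes, fullness and faithfulness follow almost formally. Faithfulness on $\CSep(k)$ is exactly the statement that $\cN(U(A), U(B)) = 0$ for commutative separable $A, B$, which is the radical computation above. Fullness requires that every morphism in $\NNum(k)$ between objects of $\CSep(k)$ lifts to $\NChow(k)$; since $\NNum(k)$ is the idempotent completion of $\NChow(k)/\cN$ and the objects $U(A)$ already lie in $\NChow(k)/\cN$ (no idempotent completion needed to reach them), a morphism $U(A) \to U(B)$ in $\NNum(k)$ is the same as a morphism in $\NChow(k)/\cN$, which by definition is a class in $\Hom_{\NChow(k)}(U(A),U(B))/\cN(U(A),U(B)) = \Hom_{\NChow(k)}(U(A),U(B))$, and this lifts tautologically. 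So the entire content is concentrated in the trace computation.

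The main obstacle I anticipate is the non-degeneracy of the trace pairing over $\bbZ$ — showing $\cN$ truly vanishes rather than merely has finite index. Over $\bbQ$ this is classical (the category of rational numerical motives of $0$-dimensional varieties is semisimple, being representations of profinite groups), but the statement here is integral. The resolution should come from the explicit basis: $\Hom_{\NChow(k)}(U(L),U(L'))$ has a $\bbZ$-basis indexed by primes of $L\otimes_k L'$, and the composition pairing with the dual basis of $\Hom_{\NChow(k)}(U(L'),U(L))$ computes, via the projection formula, something like $\sum$ over primes lying over a fixed prime, which is a \emph{diagonal} matrix in these bases with nonzero (integer) entries — hence injective, so the radical is zero even integrally. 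One must be careful that the pairing is between $\Hom(U(L),U(L'))$ and $\Hom(U(L'),U(L))$ (not a single group with itself), and that the relevant matrix, while not the identity, has no kernel; writing out the composition $L\otimes_k L' \leftarrow L\otimes_k L' \otimes_k L'' \to \cdots$ and tracing through $K_0$-pushforwards should make this transparent, and is presumably exactly what Proposition \ref{prop:properties2} does in the more general (possibly noncommutative) setting.
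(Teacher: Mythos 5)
Your proposal is correct, but it takes a different route from the paper. The paper does not touch the Hom-groups between general objects of $\CSep(k)$ at all: it proves the stronger Proposition \ref{prop:properties2} (for all separable algebras) and deduces the present statement as a special case, invoking \cite[Lem.~7.1.1]{AK} to reduce, via rigidity, to non-degeneracy of the single family of pairings $\Hom(U(k),U(A))\times\Hom(U(A),U(k))\to\bbZ$; additivity and Morita invariance then reduce to $A$ a division algebra (in the commutative case, a separable field $L$), where the pairing is simply $(m,n)\mapsto m\cdot\dim_k(D)\cdot n$, visibly with zero radical. You instead compute the full trace pairing $\Hom(U(L),U(L'))\times\Hom(U(L'),U(L))\to\bbZ$ using the identification with $K_0(L\otimes_k L')$ and the orbit basis, and observe that its Gram matrix is a scaled permutation matrix (orbit paired against its transpose with value the orbit size), hence has trivial radical integrally; your treatment of fullness (Homs unchanged under idempotent completion, quotient Homs lift tautologically once $\cN$ vanishes) matches the formal part of the argument. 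What each buys: the paper's reduction is shorter and extends verbatim to noncommutative separable algebras, while your computation effectively re-derives the convolution/correspondence description of $\CSep(k)$ (the category $\Cov(G)$ of Proposition \ref{thm:main1}) and so gives finer information about the pairing. One caveat: your stated trace formula $\sum_{\mathfrak p} n_{\mathfrak p}[L_{\mathfrak p}:k]/[L:k]$ is not correct --- the categorical trace of the basis class $[L_{\mathfrak p}]$ equals $[L:k]$ when $\mathfrak p$ is the diagonal prime and $0$ otherwise (so the trace of the identity is $[L:k]$, not $1$) --- but this slip does not affect your argument, since the property you actually use (an orbit pairs nontrivially, with value its size, only against its transpose) is the correct one.
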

Intuitively speaking, Proposition~\ref{prop:properties} shows that $\CSep(k)$ is insensitive to the numerical equivalence relation. Consider now the following four categories:
\begin{itemize}
\item[(i)] Recall from \cite[\S4]{Andre} that the category of Chow motives $\Chow(k)$ comes equipped with a symmetric monoidal functor $M: \SmProj(k)^\op \to \Chow(k)$ defined on smooth projective $k$-schemes. Let us denote by $\Etale(k)$ the full subcategory of $\Chow(k)$ consisting of the objects $M(X)$ with $X$ a finite {\'e}tale $k$-scheme. Note that $\Etale(k)$ is an additive symmetric monoidal category.
\item[(ii)] Given a finite $G$-set $S$, let $\Map^G(S,\bbZ)$ be the set of $G$-invariant functions $\alpha: S \to \bbZ$. The {\em convolution category $\Cov(G)$} is defined as follows: 
the objects are the finite $G$-sets $S$; the morphisms $\Hom_{\Cov(G)}(S_1,S_2)$ are the $G$-invariant functions $\Map^G(S_1 \times S_2, \bbZ)$; the composition law is the convolution product
$$
\Map^G(S_1\times S_2, \bbZ) \times \Map^G(S_2\times S_3, \bbZ)  \too \Map^G(S_1 \times S_3, \bbZ)
$$
where 
$$
(\alpha, \beta) \mapsto (\alpha \ast \beta) (s_1, s_3):= \sum_{s_2 \in S_2}\alpha(s_1,s_2) \cdot \beta(s_2, s_3)\,;
$$
the identities are the $G$-invariant functions $S \times S \to \bbZ$ which are equal to $1$ on the diagonal and $0$ elsewhere. The disjoint union and the cartesian product of finite $G$-sets makes $\Cov(G)$ into an additive symmetric monoidal category.
\item[(iii)] Given closed subgroups $H,K \subseteq G$ of finite index, let $H \backslash G/K$ be the set of $(H,K)$ double cosets in $G$ and $\Map(H\backslash G/K, \bbZ)$ the set of functions $\alpha: H\backslash  G/K \to \bbZ$. The category $\Heck(G)$ is defined as follows: the objects are the closed subgroups $H \subseteq G$ of finite index; the morphisms $\Hom_{\Heck(G)}(H,K)$ are the functions $\Map(H\backslash  G/K, \bbZ)$; the composition law is the convolution~product
$$ \Map(H\backslash G/ K, \bbZ) \times \Map(K\backslash G/ L, \bbZ) \too \Map(H\backslash G/ L, \bbZ) \,,$$
where 
$$ (\alpha, \beta) \mapsto (\alpha \star\beta)(\overline{g}) := \sum_{\overline{h} \in G/K}\alpha(h^{-1}) \cdot \beta(h g)\,;$$
the identities are the characteristic functions $\delta_{H1H}: H\backslash G/H \to \bbZ$. Note that $\End_{\Heck(G)}(H,H)$ identifies with the classical Hecke algebra of the pair $(G,H)$. In the particular case where $H$ is a normal subgroup of $G$, the latter Hecke algebra reduces to the group ring $\bbZ[G/H]$. The {\em Hecke category $\Hecke(G)$} is defined as the closure of $\Heck(G)$ under (formal) finite direct sums. By construction, $\Hecke(G)$ is an additive category.
\item[(iv)] A right $\bbZ[G]$-module $N$ is called a {\em permutation $G$-module} if it admits a finite $\bbZ$-basis which is invariant under the $G$-action. Equivalently, $N$ is of the form $\bbZ[S]$ for some finite $G$-set $S$.  Let us denote by $\Perm(G)$ the full subcategory of those right $\bbZ[G]$-modules which are permutation $G$-modules. Note that $\Perm(G)$ is an additive category.
\end{itemize}
By combining Merkurjev-Panin's work \cite[Prop.~1.7]{MP} with \cite[Thm.~6.10]{twisted}, we obtain the following additive equivalence of categories
\begin{eqnarray}\label{eq:equivalence-MP}
\CSep(k) \stackrel{\simeq}{\too} \Perm(G) && U(A) \mapsto K_0(A \otimes k_{\mathrm{sep}})\,.
\end{eqnarray}
Other models of the category $\CSep(k)$ are provided by the following result:
\begin{proposition}\label{thm:main1}
The following additive categories are equivalent:
\begin{eqnarray}\label{eq:5-categories}
\CSep(k) \quad \Etale(k) \quad \Cov(G) \quad \Hecke(G) \quad \Perm(G)\,.
\end{eqnarray}
Their idempotent completion is the category of (integral) Artin motives $\AM(k)$.
\end{proposition}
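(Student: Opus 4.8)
The starting point is the additive equivalence \eqref{eq:equivalence-MP}, which already identifies $\CSep(k)$ with $\Perm(G)$, so it remains to relate $\Etale(k)$, $\Cov(G)$ and $\Hecke(G)$ to $\Perm(G)$. The plan is to use the small category $\Heck(G)$ --- whose objects are the finite-index closed subgroups $H \subseteq G$ --- as a common ``skeleton'': I will produce fully faithful functors out of $\Heck(G)$ into $\Perm(G)$ and into $\Cov(G)$ whose images are biproduct-dense, and then invoke the universal property of the additive (finite-biproduct) completion, $\Hecke(G)$ being by construction the completion of $\Heck(G)$ in this sense.

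For the first functor, $H \mapsto \bbZ[G/H]$ defines a functor $\Heck(G) \to \Perm(G)$; a standard application of Frobenius reciprocity identifies $\Hom_{\bbZ[G]}(\bbZ[G/H], \bbZ[G/K])$ with $\Map(H\backslash G/K, \bbZ)$ compatibly with composition, so this functor is fully faithful, and its image is biproduct-dense since every permutation $G$-module is a finite direct sum of modules $\bbZ[G/H]$. For the second, $H \mapsto G/H$ defines a functor $\Heck(G) \to \Cov(G)$: the orbits of the diagonal $G$-action on $G/H \times G/K$ are indexed by the double cosets $H\backslash G/K$, which gives $\Map^G(G/H \times G/K, \bbZ) \cong \Map(H\backslash G/K, \bbZ)$; one checks this identification is compatible with the convolution products $\star$ and $\ast$ and with identities, so the functor is fully faithful, and its image is biproduct-dense since (as $G$ is profinite) every finite $G$-set is a finite disjoint union --- which in $\Cov(G)$ is the biproduct --- of transitive $G$-sets. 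The universal property of the biproduct completion now yields additive equivalences $\Hecke(G) \simeq \Perm(G)$ and $\Hecke(G) \simeq \Cov(G)$.

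Next I would treat $\Etale(k) \simeq \Cov(G)$. Galois descent gives the classical anti-equivalence $X \mapsto X(k_{\mathrm{sep}})$ between finite {\'e}tale $k$-schemes and finite $G$-sets, and composing with the contravariant functor $M$ of item~(i) matches the objects of $\Etale(k)$ with those of $\Cov(G)$. For the morphisms, the product $X \times_k Y$ of two finite {\'e}tale $k$-schemes is again finite {\'e}tale, so $\Hom_{\Chow(k)}(M(X), M(Y)) = \mathrm{CH}^0(X \times_k Y) = \bbZ[\pi_0(X \times_k Y)]$, and $\pi_0(X \times_k Y)$ is precisely the set of orbits of the diagonal $G$-action on $X(k_{\mathrm{sep}}) \times Y(k_{\mathrm{sep}})$; this identifies the Hom-group with $\Map^G(X(k_{\mathrm{sep}}) \times Y(k_{\mathrm{sep}}), \bbZ)$. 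Because $X$ and $Y$ are zero-dimensional, the composition of algebraic correspondences (pull back along the two projections from the triple product, multiply, push forward to the outer factors) reduces, for locally constant $\bbZ$-valued functions, to ``multiply and sum over the middle fibre'', \ie to the convolution product $\ast$, and the class of the diagonal goes to the identity function. This gives an additive equivalence $\Etale(k) \simeq \Cov(G)$; chaining it with the previous equivalences and with \eqref{eq:equivalence-MP} proves the first assertion.

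For the last sentence, recall that the category of integral Artin motives $\AM(k)$ is the idempotent completion of $\Etale(k)$ --- equivalently, the pseudo-abelian subcategory of $\Chow(k)$ generated by the motives of finite {\'e}tale $k$-schemes. Since idempotent completion carries equivalences to equivalences, the idempotent completions of all five categories in \eqref{eq:5-categories} are canonically equivalent to $\AM(k)$. The main obstacle is not any single step but the bookkeeping required to match the three a priori distinct composition laws at play --- the convolution products $\ast$ and $\star$, the composition of algebraic correspondences on finite {\'e}tale schemes, and composition of $\bbZ[G]$-linear maps --- while tracking variances, left- versus right-module structures, and the $H\backslash G/K$ versus $K\backslash G/H$ double-coset conventions. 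Each identification is classical in isolation; assembling them into a single coherent web of equivalences is where the care lies.
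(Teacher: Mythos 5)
Your proposal is correct, but it takes a partly different route from the paper's. The pieces you share with the paper are the use of \eqref{eq:equivalence-MP} to bring $\CSep(k)$ into the picture, the identification $\Map(H\backslash G/K,\bbZ)\simeq\Map^G(G/H\times G/K,\bbZ)$ underlying $\Hecke(G)\simeq\Cov(G)$, and the citation of Andr\'e for $\AM(k)$ being the idempotent completion. You diverge in two places. First, for the \'etale piece the paper does \emph{not} compare $\Etale(k)$ with $\Cov(G)$ directly: it proves $\Etale(k)\simeq\CSep(k)$ by invoking the fully-faithful embedding $\Etale(k)\hookrightarrow\Chow(k)/_{\!-\otimes\bbZ(1)}$ into the orbit category and the comparison functor $\Psi$ of Bernardara--Tabuada, i.e.\ it leverages previously established commutative-vs-noncommutative comparison theorems; you instead compute the correspondence category of finite \'etale $k$-schemes from scratch ($\Hom=\mathrm{CH}^0(X\times_kY)=\bbZ[\pi_0(X\times_kY)]$, composition of correspondences on zero-dimensional schemes reducing to convolution), which is classical and correct, more elementary and self-contained, but requires you to actually carry out the push--pull bookkeeping that the paper outsources to the references. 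Second, you organize the $G$-set side around $\Heck(G)$ as a hub, getting $\Hecke(G)\simeq\Perm(G)$ via $H\mapsto\bbZ[G/H]$ and the universal property of the finite-biproduct completion, whereas the paper constructs $\Cov(G)\simeq\Perm(G)$ directly on all finite $G$-sets and then $\Hecke(G)\simeq\Cov(G)$; these are equivalent amounts of work. One terminological slip: $X\mapsto X(k_{\mathrm{sep}})$ is a \emph{covariant} equivalence between finite \'etale $k$-schemes and finite $G$-sets (the anti-equivalence is with \'etale $k$-algebras); this is harmless here since the Hom-groups on both sides are symmetric correspondence groups and you verify the composition law directly, but it is exactly the kind of variance bookkeeping you yourself flag as the delicate point.
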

By combining Proposition \ref{thm:main1} with some examples arising from integral representation theory, we obtain the following (surprising) remarks:
\begin{remark} 
\label{rem:swan}
Let $H:=\langle a, b, c\,|\, a^8=b^2=c^2=abc\rangle$ be the generalized quaternion group of order $32$. R. Swan constructed in \cite{Swan}  a non-free projective left ideal $I \subset \bbZ[H]$ such that $I \oplus \bbZ[H] \simeq \bbZ[H] \oplus \bbZ[H]$ as $\bbZ[H]$-modules. We claim that $I$ is {\em not} a permutation $H$-module. Assume that $I \simeq \bbZ[S]$ for some finite $H$-set $S$. Since $H$ is a $2$-group, $(\bbZ/2\bbZ)[H]$ is a local ring. This implies that $I$ is indecomposable and consequently that $S$ has a single $H$-orbit. Making use of the equality $\mathrm{rank}_\bbZ(I)=\mathrm{rank}_\bbZ(\bbZ[H])$, we hence conclude that $S\simeq H$. This contradicts the fact that $I$ is non-free. As an application, we obtain the following~results:
\begin{itemize}
\item[(i)] The above categories \eqref{eq:5-categories} are {\em not} idempotent complete! Choose a Galois field extension $l/k$ inside $k_{\mathrm{sep}}$ with Galois group $H$. Via the induced group homomorphism $G \twoheadrightarrow H$, $\mathrm{Perm}(H)$ identifies with a full subcategory of $\Perm(G)$. Consequently, the idempotent in $\End_{\Perm(G)}(\bbZ[H]\oplus \bbZ[H])$ corresponding to the projection $\bbZ[H] \oplus \bbZ[H] \to I$ does not split in $\Perm(G)$.
\item[(ii)] The category of (integral) Artin motives $\AM(k)$ does {\em not} satisfy cancellation\footnote{It seems reasonable to conjecture that the above categories \eqref{eq:5-categories} also do {\em not} satisfy cancellation. Surprisingly, it appears that this problem (in the particular case of the category $\Perm(G)$) has not been studied; we have consulted a few experts on this matter.}!
\end{itemize}
\end{remark}
\begin{remark}
\label{rem:scot}
Let $H:=PSL(2, \bbF_{29})$.
L. Scott constructed in \cite{Scott} non-conjugate subgroups $L_1, L_2 \subset H$ such that $\bbZ[H/L_1]\simeq \bbZ[H/L_2]$ as $\bbZ[H]$-modules. Let us choose a Galois field extension $l/k$ inside $k_{\mathrm{sep}}$ with Galois group $H$ and write $l_i$ for the associated field extension $l^{L_i}$. Since $L_1$ is non-conjugate to $L_2$, $l_1\not \simeq l_2$. On the other, since the above equivalence of categories \eqref{eq:equivalence-MP} sends $l_i$  to the permutation $G$-module $\bbZ[H/L_i]$, we have $U(l_1) \simeq U(l_2)$.
\end{remark}
We finish this subsection with a result concerning inseparable field extensions:
\begin{theorem}\label{thm:inseparable}
Given a purely inseparable field extension $l/k$ of degree $p^r$, we have an isomorphism $U(k)_R \simeq U(l)_R$ for every commutative ring $R$ containing $1/p$.
\end{theorem}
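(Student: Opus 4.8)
The plan is to construct, directly at the level of noncommutative motives, two morphisms $f\colon U(k)\to U(l)$ and $g\colon U(l)\to U(k)$ in $\Hmo_0(k)$ whose two composites both equal multiplication by $p^r$, and then to invert $p$. Recall from \S\ref{sub:universal} that $\Hom_{\Hmo_0(k)}(U(\cA),U(\cB))$ is computed by the Grothendieck group of the category of $\cA$-$\cB$-bimodules which are perfect over $\cB$, that composition is induced by the derived tensor product of bimodules over the intermediate dg category, and that $\id_{U(\cA)}$ is the class of the diagonal bimodule $\cA$. The inclusion $k\hookrightarrow l$ gives, by extension of scalars, the morphism $f$ represented by $l$ regarded as a $k$-$l$-bimodule (perfect over $l$, being free of rank one); and, since $l$ is proper over $k$, by restriction of scalars the morphism $g$ represented by $l$ regarded as an $l$-$k$-bimodule (perfect over $k$, being free of rank $p^r$).

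First I would compute $g\circ f\colon U(k)\to U(k)$: under the above dictionary it is the class of ${}_k(l\otimes_l l)_k\simeq{}_k l_k$, i.e. of $l$ regarded merely as a $k$-vector space, which has dimension $[l:k]=p^r$. Since $\End_{\Hmo_0(k)}(U(k))\simeq K_0(k)\simeq\bbZ$ is generated by the diagonal bimodule $k$, we conclude $g\circ f=p^r\cdot\id_{U(k)}$. This step uses only that $l/k$ is finite of degree $p^r$.

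The heart of the matter is the composite $f\circ g\colon U(l)\to U(l)$, which is the class of ${}_l(l\otimes_k l)_l$, that is of $l\otimes_k l$ viewed as a module over the commutative ring $l\otimes_k l$. Here pure inseparability is essential: for each $x\in l$ one has $x^{p^n}\in k$ for $n$ large, hence $(x\otimes 1-1\otimes x)^{p^n}=x^{p^n}\otimes 1-1\otimes x^{p^n}=0$, so the kernel $\mathfrak m$ of the multiplication map $l\otimes_k l\to l$ is generated by nilpotents; being finitely generated (as $l\otimes_k l$ is a finite-dimensional $k$-algebra, hence Noetherian) it is itself nilpotent. Thus $l\otimes_k l$ is a local Artinian $l$-algebra with residue field $l$; its $\mathfrak m$-adic filtration is a finite filtration by two-sided ideals, each successive quotient $\mathfrak m^i/\mathfrak m^{i+1}$ is an $l$-vector space on which the left and right $l$-actions both factor through $l\otimes_k l\to l$, i.e. is a direct sum of copies of the diagonal bimodule $l$, and every term of the filtration is perfect over $l$ (a subspace of the finite-dimensional $l$-vector space $l\otimes_k l$). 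Summing the associated short exact sequences in $K_0$ gives $[{}_l(l\otimes_k l)_l]=\big(\dim_l(l\otimes_k l)\big)\cdot[{}_l l_l]=p^r\cdot\id_{U(l)}$, hence $f\circ g=p^r\cdot\id_{U(l)}$.

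Finally I would pass to $R$-coefficients in $\Hmo_0(k)$. As $1/p\in R$, the integer $p^r$ is a unit in $R$, so in $\Hmo_0(k)_R$ the morphisms $f$ and $(p^r)^{-1}g$ are mutually inverse, giving $U(k)_R\simeq U(l)_R$. The main obstacle is the bimodule bookkeeping of the third paragraph: one must verify that $l\otimes_k l$ and all the quotients $\mathfrak m^i/\mathfrak m^{i+1}$ are genuinely perfect over $l$ (so that they define morphisms in $\Hmo_0(k)$ and additivity along the filtration is legitimate), and it is precisely the nilpotence of $\mathfrak m$---equivalently, the pure inseparability of $l/k$---that forces every composition factor to be the diagonal bimodule; for a nontrivial separable extension $l\otimes_k l$ contains idempotents and $f\circ g$ would fail to be a multiple of the identity.
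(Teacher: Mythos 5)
Your proposal is correct and follows essentially the same route as the paper: the same two bimodule morphisms $U(k)\to U(l)$ and $U(l)\to U(k)$, the same verification that both composites equal $p^r$ times the identity in $\Hmo_0(k)$, and then inversion of $p$. The only difference is that where the paper cites \cite[Lem.~9.6]{TV} for the locality of $l\otimes_k l$ and reads off the class by a dimension count, you reprove this inline via the nilpotence of the augmentation ideal and the $\mathfrak{m}$-adic filtration, which amounts to the same computation.
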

Intuitively speaking, Theorem~\ref{thm:inseparable} shows that the noncommutative motives of  purely inseparable field extensions only contain torsion information.
\begin{corollary}\label{cor:inseparable}
Let $l/k$ be a purely inseparable field extension of degree $p^r$ and $E:\dgcat(k) \to \mathrm{D}$ an additive invariant with values in a $\bbZ[1/p]$-linear category. Under these assumptions, we have an isomorphism $E(k)\simeq E(l)$.
\end{corollary}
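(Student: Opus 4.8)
The plan is to obtain Corollary~\ref{cor:inseparable} as a purely formal consequence of Theorem~\ref{thm:inseparable} together with the universal property of the functor $U$; no new geometric or representation-theoretic input should be required.

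First I would invoke the equivalence \eqref{eq:categories}: since $E$ is an additive invariant, it corresponds to an additive functor $\overline{E}: \Hmo_0(k) \to \mathrm{D}$, unique up to isomorphism, satisfying $E \simeq \overline{E}\circ U$. Next I would exploit the hypothesis that the additive category $\mathrm{D}$ is $\bbZ[1/p]$-linear. For every pair of objects $X,Y$ of $\Hmo_0(k)$, the map $\Hom_{\Hmo_0(k)}(X,Y) \to \Hom_{\mathrm{D}}(\overline{E}X,\overline{E}Y)$ induced by $\overline{E}$ is a homomorphism of abelian groups whose target is a $\bbZ[1/p]$-module, hence it factors uniquely through $\Hom_{\Hmo_0(k)}(X,Y)\otimes_{\bbZ}\bbZ[1/p]$. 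Consequently $\overline{E}$ factors as the composite of the change-of-coefficients functor $\Hmo_0(k) \to \Hmo_0(k)_{\bbZ[1/p]}$ (same objects, Hom-groups base-changed to $\bbZ[1/p]$) with an additive functor $\widetilde{E}: \Hmo_0(k)_{\bbZ[1/p]} \to \mathrm{D}$. Finally, I would apply $\widetilde{E}$ to the isomorphism $U(k)_{\bbZ[1/p]} \simeq U(l)_{\bbZ[1/p]}$ provided by Theorem~\ref{thm:inseparable} with $R = \bbZ[1/p]$ (legitimate since $1/p \in \bbZ[1/p]$); combining this with the identifications $E(k) \simeq \widetilde{E}(U(k)_{\bbZ[1/p]})$ and $E(l) \simeq \widetilde{E}(U(l)_{\bbZ[1/p]})$ yields $E(k) \simeq E(l)$.

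The main point is that all the substantive work has already been carried out in Theorem~\ref{thm:inseparable}; here the only things to verify are routine compatibilities — that $l$, being a $k$-algebra, is an object of $\dgcat(k)$ so that $E(l)$ makes sense, and that the change-of-coefficients functor on $\Hmo_0(k)$ is compatible with $U$ in the manner used above. I therefore do not anticipate any genuine obstacle in this deduction; the role of the hypothesis that $\mathrm{D}$ be $\bbZ[1/p]$-linear is precisely to permit the passage from integral noncommutative motives, where $U(k)$ and $U(l)$ genuinely differ (they record torsion information), to the $\bbZ[1/p]$-linearized category, where Theorem~\ref{thm:inseparable} makes them agree.
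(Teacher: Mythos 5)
Your argument is correct and is essentially the paper's own proof: the paper deduces the corollary by combining Theorem \ref{thm:inseparable} with the equivalence \eqref{eq:categories-R}, and your factorization of $\overline{E}$ through the change-of-coefficients functor $\Hmo_0(k)\to\Hmo_0(k)_{\bbZ[1/p]}$ is precisely the content of that equivalence, which you re-derive rather than cite. No gap; the final step of applying $\widetilde{E}$ to the isomorphism $U(k)_{\bbZ[1/p]}\simeq U(l)_{\bbZ[1/p]}$ is exactly what the paper does.
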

\begin{proof}
It follows from the combination of Theorem \ref{thm:inseparable} with equivalence \eqref{eq:categories-R}.
\end{proof}
%-------------------------------------------------------------------------------
\subsection*{Separable algebras}
%-------------------------------------------------------------------------------
Recall from \cite[\S III Thm.~3.1]{Knus} that a $k$-algebra $A$ is
separable if and only if it is isomorphic to a finite product of
matrix algebras $M_{r\times r}(D)$. Here, $D$ is a finite dimensional
division $k$-algebra with center $Z(D)$ a finite separable field
extension of $k$. Let us denote by $\Sep(k)$ the full subcategory of
$\NChow(k)$ consisting of the objects $U(A)$ with $A$ a separable
$k$-algebra. As explained above, $\Sep(k)$ is an additive symmetric
monoidal subcategory of $\NChow(k)$. The following result generalizes Proposition \ref{prop:properties}.
\begin{proposition}\label{prop:properties2}
The quotient functor $ \Sep(k) \to \NNum(k)$ is fully-faithful.
\end{proposition}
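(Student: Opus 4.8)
The plan is to reduce the statement to a computation of categorical traces in $\NChow(k)$, exactly as in the proof of Proposition~\ref{prop:properties}. Since $\NNum(k)$ is obtained from $\NChow(k)$ by quotienting out the $\otimes$-ideal $\cN$ and then idempotent-completing, and since $\Sep(k)$ is already idempotent-complete inside $\NChow(k)$ (separable algebras are stable under the formation of matrix algebras and finite products, so the relevant idempotents split), full faithfulness of $\Sep(k)\to\NNum(k)$ is equivalent to the assertion that $\cN(U(A),U(B))=0$ for all separable $k$-algebras $A,B$. First I would observe that, because $U$ is additive and monoidal and every separable $k$-algebra decomposes as a finite product of matrix algebras $M_{r\times r}(D)$ over division algebras, it suffices to treat the case where $A=M_{r\times r}(D)$ and $B=M_{s\times s}(D')$ with $D,D'$ central division algebras over finite separable extensions of $k$. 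Morita invariance in $\NChow(k)$ gives $U(M_{r\times r}(D))\simeq U(D)$, so one is reduced to division algebras $D,D'$.

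Next I would use the structural description of $\Sep(k)$ as a ``fibered $\bbZ$-order'' over $\CSep(k)$ alluded to in the introduction (Proposition~\ref{prop:ring}), together with the fact that the Hom-groups in $\NChow(k)$ between noncommutative motives of central simple algebras are computed, after base change to $k_{\mathrm{sep}}$, by $G$-equivariant data. Concretely, $\Hom_{\NChow(k)}(U(D),U(D'))$ sits inside $\Hom_{\NChow(k_{\mathrm{sep}})}(U(D_{\mathrm{sep}}),U(D'_{\mathrm{sep}}))$, and over $k_{\mathrm{sep}}$ every central simple algebra splits, so this latter group is the corresponding Hom-group between Artin motives, which is a free abelian group with a basis of ``point classes.'' The key point is that on such a basis the categorical trace pairing is nondegenerate (this is the classical fact, going back to Andr\'e--Kahn, that numerical equivalence is trivial on Artin/\'etale motives, already used to prove Proposition~\ref{prop:properties} via the equivalence with $\Perm(G)$), and that the trace form is compatible with Galois descent, so a morphism $f:U(D)\to U(D')$ that is numerically trivial in $\NChow(k)$ becomes numerically trivial over $k_{\mathrm{sep}}$, hence is $0$ over $k_{\mathrm{sep}}$, hence is $0$.

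The main obstacle I anticipate is the bookkeeping needed to make the descent argument for the trace form precise: one must check that the categorical trace in $\NChow(k)$ is sent, under base change $-\otimes_k k_{\mathrm{sep}}$, to the categorical trace in $\NChow(k_{\mathrm{sep}})$ (this uses that $-\otimes_k k_{\mathrm{sep}}$ is symmetric monoidal and preserves duals, which holds because all the dg categories in sight are smooth and proper), and that the inclusion of Hom-groups $\Hom_{\NChow(k)}(U(D),U(D'))\hookrightarrow\Hom_{\NChow(k_{\mathrm{sep}})}(\cdots)$ really is injective — which follows from the identification of the source with the $G$-fixed points of the target, itself a consequence of the fibered-order picture and of Merkurjev--Panin's description combined with \cite[Thm.~6.10]{twisted} as recorded in \eqref{eq:equivalence-MP}. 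Once these compatibilities are in place, the vanishing of $\cN$ on $\Sep(k)$ is immediate from its vanishing over a separably closed field, and Proposition~\ref{prop:properties} is recovered as the special case $D=D'=k$-with-a-twist, i.e.\ the commutative case.
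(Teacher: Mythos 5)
Your opening reductions are mostly fine: full-faithfulness of $\Sep(k)\to\NNum(k)$ is indeed equivalent to $\cN(U(A),U(B))=0$ for all separable $A,B$, and one may reduce to division algebras by additivity and Morita invariance. (But note that the idempotent-completeness of $\Sep(k)$ you assert is neither justified nor needed: the functor $\NChow(k)/\cN\to\NNum(k)$ is an idempotent completion and hence automatically fully faithful, which is all the reduction requires; Remark \ref{rem:swan} shows the analogous completeness claim already fails for $\CSep(k)$, so you should not assume it for $\Sep(k)$.) The genuine gap is the central step of your descent argument: from ``$f$ is numerically trivial in $\NChow(k)$'' you conclude that $f\otimes k_{\mathrm{sep}}$ is numerically trivial in $\NChow(k_{\mathrm{sep}})$, hence zero. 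Numerical triviality over $k$ only gives $\mathrm{tr}(g\circ f)=0$ for $g$ defined over $k$, whereas numerical triviality over $k_{\mathrm{sep}}$ requires vanishing against \emph{all} morphisms over $k_{\mathrm{sep}}$; compatibility of the categorical trace with the symmetric monoidal functor $-\otimes k_{\mathrm{sep}}$ only yields the former. The base-change map on Hom-groups has image a small sublattice: already for $A=B=l$ a Galois extension of degree $d$, $\Hom_{\Sep(k)}(U(l),U(l))\simeq K_0(l\otimes l)\simeq \bbZ^{d}$ maps into a group of rank $d^{2}$ over $k_{\mathrm{sep}}$, and nondegeneracy of a pairing says nothing about its restriction to a pair of sublattices (such a restriction can even vanish identically). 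Closing this gap would require either a transfer/norm argument relating $\mathrm{tr}_l(g'\circ f_l)$, for $g'$ defined over finite subextensions $l\subset k_{\mathrm{sep}}$, to traces over $k$, or an explicit computation of the integral pairing over $k$ --- which is exactly what the descent was meant to avoid. Relatedly, your justification of injectivity via ``$\Hom_k$ equals the $G$-fixed points of $\Hom_{k_{\mathrm{sep}}}$'' is incorrect in the noncommutative case: by Lemma \ref{lem:aux}(ii) the generator of $K_0(D^\op\otimes D')$ maps to $\mathrm{ind}$ times a point class, so the image is in general a proper finite-index subgroup of the $G$-invariants (injectivity does hold, but not for the reason you give; the $G$-fixed-point description \eqref{eq:equivalence-MP} is special to the commutative case).

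For comparison, the paper's proof stays entirely over $k$ and is much shorter: since $\Sep(k)$ is rigid symmetric monoidal, \cite[Lem.~7.1.1]{AK} reduces full-faithfulness to the nondegeneracy of the composition pairings $\Hom(U(k),U(A))\times\Hom(U(A),U(k))\to\Hom(U(k),U(k))\simeq\bbZ$; after reducing to $A=D$ a division algebra, this pairing is $\bbZ\times\bbZ\to\bbZ$, $(m,n)\mapsto m\cdot\dim(D)\cdot n$, which is visibly nondegenerate. Adopting that reduction-to-the-unit step eliminates the need for any base change to $k_{\mathrm{sep}}$.
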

\begin{proof}
Since the category $\Sep(k)$ is rigid symmetric monoidal, it suffices to show that the composition bilinear pairings (see \S\ref{sub:universal})
$$
\Hom_{\Sep(k)}(U(k),U(A)) \times \Hom_{\Sep(k)}(U(A),U(k)) \too \Hom_{\Sep(k)}(U(k),U(k))
$$
are non-degenerate; see \cite[Lem.~7.1.1]{AK}. As mentioned above,
$U(\cA \times \cB) \simeq U(\cA) \oplus U(\cB)$ for all dg categories
$\cA$ and $\cB$. Moreover, thanks to Morita invariance (see
\S\ref{sec:dg} and \S\ref{sub:additive}), we have $U(M_{r\times
  r}(D))\simeq U(D)$. Hence, it suffices to treat the particular case
where $A$ is a division $k$-algebra $D$. In this case, since every
finitely generated projective right $D$-module is free and $K_0(D^\op)
\simeq K_0(D)$, the above pairing reduces to $\bbZ \times \bbZ \to
\bbZ, (m,n) \mapsto m \cdot \mathrm{dim}(D) \cdot n$. This pairing is clearly non-degenerate.
\end{proof}
\begin{notation}\label{not:new}
Given a finite $G$-set $S$, let us write $k_S$ for the commutative separable $k$-algebra $\Hom_G(S,k_{\mathrm{sep}})$. Note that $k_{S_1} \otimes k_{S_2} \simeq k_{S_1 \times S_2}$. In the same vein, given $s \in S$, let $k_s:= \Hom_G(Gs,k_{\mathrm{sep}})$. Note that $k_s=k_{\mathrm{sep}}^H$, where $H$ is the stabilizer of $s$. Finally, given an Azumaya algebra $A$ over $k_S$ (see \cite[\S III Thm.~5.1]{Knus}), let us write $A_s$ for the central simple $k_s$-algebra $A \otimes_{k_S}k_s$ and $\mathrm{ind}_s(A)$ for the index of~$A_s$.
\end{notation}
Consider now the following two categories:
\begin{itemize}
\item[(i)] Given finite $G$-sets $S_1,S_2$ and Azumaya algebras $A,B$ over $k_{S_1}$ and  $k_{S_2}$, respectively, let $\Map^{G, A, B}(S_1 \times S_2, \bbZ)$ be the subset of $\Map^G(S_1 \times S_2, \bbZ)$ consisting of those $G$-invariant functions $\alpha: S_1 \times S_2 \to \bbZ$ such that $\alpha((s_1,s_2)) \in \mathrm{ind}_{(s_1, s_2)} (A^\op \otimes B) \cdot \bbZ$ for every $(s_1, s_2) \in S_1 \times S_2$. The category $\Cov'(G)$ is defined as follows: the objects are the pairs $(S,A)$ with $S$ is a finite $G$-set and $A$ an Azumaya algebra over $k_S$; the morphisms $\Hom_{\Cov'(G)}((S_1,A),(S_2,B))$ are the functions $\Map^{G,A,B}(S_1\times S_2,\bbZ)$; the composition law and the identities are the same as those of the convolution category. Finally, the definitions
\begin{eqnarray*}
&(S_1,A) \oplus (S_2,B) := (S_1 \amalg S_2, A\times B) & (S_1,A) \otimes (S_2,B) := (S_1 \times S_2, A\otimes B)
\end{eqnarray*}
endow $\Cov'(G)$ with an additive symmetric monoidal structure.
\item[(ii)] Let $H, K \subseteq G$ be closed subgroups of finite index and $A,B$ finite dimensional division $k$-algebras with centers $k_{\mathrm{sep}}^H$ and $k_{\mathrm{sep}}^K$, respectively. Galois theory gives rise to the following isomorphism
\begin{eqnarray*}
k_{\mathrm{sep}}^H \otimes k_{\mathrm{sep}}^K \simeq \prod_{\overline{g} \in H \backslash G / K} l_{\overline{g}} &\mathrm{where} &
l_{\bar{g}}:=k_{\mathrm{sep}}^{g^{-1}Hg\cap K}\,.
\end{eqnarray*}
Making use of it, we conclude that
\begin{eqnarray}
 A^\op \otimes B &\simeq& A^{\op}\otimes_{k^H_{\mathrm{sep}}} (k^H_{\mathrm{sep}}\otimes k_{\mathrm{sep}}^K )
\otimes_{k_{\mathrm{sep}}^K} B \nonumber \\
&\simeq& \prod_{\overline{g} \in H \backslash G / K} A^{\op}\otimes_{k^H_{\mathrm{sep}}} l_{\overline{g}} \otimes_{k_{\mathrm{sep}}^K} B \nonumber \\
&\simeq &\prod_{\overline{g} \in H \backslash G / K} (l_{\overline{g}}\otimes_{k^H_{\mathrm{sep}}} A)^\op\otimes_{l_{\overline{g}}} (l_{\overline{g}}\otimes_{k^K_{\mathrm{sep}}} B)\,. \label{eq:product}
\end{eqnarray}
The $\overline{g}$-factor of \eqref{eq:product} is a central simple $l_{\overline{g}}$-algebra. Thanks to the Wedderburn theorem, it can be written as $ M_{r_{\overline{g}}\times r_{\overline{g}}}(D_{\overline{g}})$ for a unique integer $r_{\overline{g}} \geq 1$ and finite dimensional division $k$-algebra $D_{\overline{g}}$ with center $l_{\overline{g}}$. Let
$\Map^{A,B}(H \backslash G/ K, \bbZ)$ be the subset of $\Map(H
\backslash G/K, \bbZ)$ consisting of those functions $\alpha: H \backslash
G / K \to \bbZ$ such that $\alpha(\overline{g}) \in
\mathrm{ind}_{l_{\overline{g}}}(D_{\overline{g}}) \cdot \bbZ$ for every
$\overline{g} \in H \backslash G /K$. The category $\mathrm{Heck}'(G)$
is defined as follows: the objects are the pairs $(H,A)$ with $H
\subseteq G$ a closed subgroup of finite index and $A$ a central simple
$k_{\mathrm{sep}}^H$-algebra; the morphisms
$\Hom_{\Heck'(G)}((H,A),(K,B))$ are the functions $\Map^{A,B}(H \backslash
G / K, \bbZ)$; the composition law and the identities are the same as
those of the category $\Heck(G)$. The closure of $\mathrm{Heck}'(G)$ under
(formal) finite direct sums will be denoted by $\Hecke'(G)$. By construction, $\Hecke'(G)$ is an additive category.
\end{itemize}
The next result extends the equivalences $\CSep(k) \simeq \Cov(G)\simeq\Hecke(G)$ of Proposition \ref{thm:main1} to possibly noncommutative separable $k$-algebras.
\begin{theorem}\label{thm:main2}
\begin{itemize}
\item[(i)] The above data gives rise to well-defined additive categories $\Cov'(k)$ and $\Hecke'(G)$, with $\Cov'(k)$ being moreover symmetric monoidal.
\item[(ii)] The additive categories $\Sep(k), \Cov'(G)$ and $\Hecke'(G)$ are equivalent.
\end{itemize}
\end{theorem}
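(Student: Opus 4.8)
The plan is to deduce the entire statement from an explicit computation of the Hom-groups of $\Sep(k)$ inside $\NChow(k)$. I will first construct an equivalence $\Cov'(G)\isotoo\Sep(k)$, and then pass from $\Cov'(G)$ to $\Hecke'(G)$ by the same orbit-decomposition argument that relates $\Cov(G)$ to $\Hecke(G)$ in Proposition~\ref{thm:main1}; the well-definedness assertions of part~(i)---stability of the morphism sets under the convolution products, and good behaviour of the tensor product---will fall out of the construction rather than being verified beforehand, using that $\Sep(k)$ is patently an additive symmetric monoidal subcategory of $\NChow(k)$. Recall that for smooth proper dg categories one has $\Hom_{\NChow(k)}(U(\cA),U(\cB))\simeq K_0(\cA^{\op}\Lotimes\cB)$, with composition $[M]\circ[N]=[M\Lotimes_{\cB}N]$ and identities given by the diagonal bimodules. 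Since separable $k$-algebras and their tensor products are semisimple (and flat over $k$), every derived tensor product occurring below is an ordinary tensor product, and $K_0$ of a semisimple $k$-algebra is the free abelian group on the classes of its simple modules.

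On objects, the functor sends $(S,A)$ to $U(A)$; this is legitimate because an Azumaya algebra $A$ over $k_S$ is in particular a separable $k$-algebra, and it is essentially surjective because every separable $k$-algebra $B$ is an Azumaya algebra over its center, which is a commutative separable algebra of the form $k_{S_B}$ (combine the Wedderburn decomposition of $B$ with Notation~\ref{not:new}), so that $U(B)$ is the image of $(S_B,B)$. On morphisms, fix Azumaya algebras $A$ over $k_{S_1}$ and $B$ over $k_{S_2}$; then $A^{\op}\otimes_k B$ is an Azumaya algebra over $k_{S_1}\otimes k_{S_2}\simeq k_{S_1\times S_2}$, hence decomposes as a product $\prod_{o}C_o$ over the $G$-orbits $o$ of $S_1\times S_2$, where $C_o\simeq(A^{\op}\otimes B)_{s}$ is a central simple algebra over $k_{s}$ for any $s\in o$. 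Thus $K_0(A^{\op}\otimes_k B)=\bigoplus_o K_0(C_o)$ is a direct sum of copies of $\bbZ$ indexed, exactly as the free abelian group $\Map^G(S_1\times S_2,\bbZ)$, by the orbits of $S_1\times S_2$. The crucial normalization is that base change of $C_o$ to $k_{\mathrm{sep}}$ yields an injection $K_0(C_o)\hookrightarrow K_0(C_o\otimes_{k_s}k_{\mathrm{sep}})=\bbZ$ sending the class of a simple $C_o$-module to $\mathrm{ind}(C_o)=\mathrm{ind}_{s}(A^{\op}\otimes B)$ times the class of the (unique) simple module over the split algebra $C_o\otimes_{k_s}k_{\mathrm{sep}}$; this is an elementary dimension count. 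Running over all orbits identifies $K_0(A^{\op}\otimes_k B)$ with exactly the subgroup $\Map^{G,A,B}(S_1\times S_2,\bbZ)\subseteq\Map^G(S_1\times S_2,\bbZ)$, which is the desired bijection on Hom-sets; taking $B=A$ and inspecting the diagonal orbits, one checks that the class of the identity bimodule corresponds to $\delta$.

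The remaining---and decisive---point is that this bijection is compatible with composition, \ie that $[M]\Lotimes_B[N]$ is sent to the convolution product $\alpha\ast\beta$; this is the main obstacle. The strategy is to reduce to the commutative case already settled in Proposition~\ref{thm:main1}: forgetting the Azumaya data and base changing to $k_{\mathrm{sep}}$ places everything in the ``split'' situation, in which the Merkurjev-Panin computation of \cite{MP,twisted} identifies the composition of correspondences with the convolution product on $\Map^G$. What must be added is the bookkeeping of indices through the Mackey-style decomposition of $(A^{\op}\otimes_k B)\Lotimes_B(B^{\op}\otimes_k C)$ relative to the triple $(S_1,S_2,S_3)$ of finite $G$-sets: one has to follow how the simple modules of the various $C_o$ multiply under $\otimes_B$ and match the resulting integer structure constants, weighted by the index normalization above, with those of $\ast$. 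Granting this, the assignment $(S,A)\mapsto U(A)$ is a fully faithful and essentially surjective functor, hence an equivalence $\Cov'(G)\isotoo\Sep(k)$; in particular each $\Map^{G,A,B}$ is then automatically stable under $\ast$ and contains the identities, which is the well-definedness of $\Cov'(G)$ in part~(i), and the symmetric monoidal structure on $\Cov'(G)$ is transported from the evident one on $\Sep(k)\subseteq\NChow(k)$---the two structures agreeing on objects because $k_{S_1}\otimes k_{S_2}\simeq k_{S_1\times S_2}$, and on morphisms by the compatibility of $K_0$ with tensor products.

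Finally, $\Cov'(G)\simeq\Hecke'(G)$ follows formally. A transitive $G$-set is of the form $G/H$ with $k_{G/H}\simeq k_{\mathrm{sep}}^{H}$, and a central simple $k_{\mathrm{sep}}^{H}$-algebra is precisely an Azumaya algebra over $k_{G/H}$, so $(H,A)\mapsto(G/H,A)$ embeds $\Heck'(G)$ fully faithfully into $\Cov'(G)$, the identification of morphism sets being the orbit/double-coset bijection $\Map^{A,B}(H\backslash G/K,\bbZ)=\Map^{G,A,B}(G/H\times G/K,\bbZ)$---and the index conditions match because the factors of $A^{\op}\otimes B$ displayed in \eqref{eq:product} are exactly the $C_o$ attached to the orbits $o$ of $G/H\times G/K$. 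As every finite $G$-set is a finite disjoint union of transitive ones, every object $(S,A)$ of $\Cov'(G)$ is a finite direct sum of objects coming from $\Heck'(G)$; passing to the closure under finite direct sums therefore yields $\Hecke'(G)\simeq\Cov'(G)\simeq\Sep(k)$, and with it the well-definedness of $\Hecke'(G)$, completing the proof.
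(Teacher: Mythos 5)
Your setup matches the paper's: the object bijection $(S,A)\mapsto U(A)$, the identification of $\Hom_{\Sep(k)}(U(A),U(B))\simeq K_0(A^\op\otimes B)$ with the subgroup $\Map^{G,A,B}(S_1\times S_2,\bbZ)$ via the classes of the simple modules of the orbit factors, and the index normalization under base change to $k_{\mathrm{sep}}$ are exactly the content of the paper's construction of the graph isomorphism $Q$ and of Lemma \ref{lem:aux}. But the step you yourself flag as ``the main obstacle'' --- that this bijection intertwines the derived tensor product of bimodules with the convolution product, equivalently that the subgroups $\Map^{G,A,B}$ are stable under $\ast$ and contain the identities --- is precisely where your argument stops: you describe the task (``follow how the simple modules of the various $C_o$ multiply under $\otimes_B$ and match the resulting integer structure constants'') and then write ``Granting this''. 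That is a genuine gap, and it is not a peripheral one: both the well-definedness assertions of part (i) and the equivalence of part (ii) hinge on it, so neither can be said to ``fall out of the construction'' until it is supplied. Note also that, as written, your reduction ``forgetting the Azumaya data and base changing to $k_{\mathrm{sep}}$'' is only a slogan: you never explain what property of the base-change functor lets you transfer the composition law back from the split situation.

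The paper closes exactly this gap without any structure-constant bookkeeping, and the mechanism is worth internalizing: one forms the diagram of graphs \eqref{eq:diagram-last}, whose bottom row is the already-established commutative equivalence $\Cov(G)\simeq\CSep(k)$ of Proposition \ref{thm:main1} (in the form \eqref{eq:equivalence-new}) followed by $-\otimes k_{\mathrm{sep}}$, and whose commutativity is checked on the basis elements $\delta'_{(s_1,s_2)}$ using Lemma \ref{lem:aux}(ii). The decisive point is that $-\otimes k_{\mathrm{sep}}\colon\Sep(k)\to\Sep(k_{\mathrm{sep}})$ is \emph{faithful} on these Hom-groups (again Lemma \ref{lem:aux}(ii): multiplication by the index on each $\bbZ$-summand is injective), so the graph morphism $\phi$, whose composite with a faithful functor is a functor, is itself a functor; transporting back along the left-hand square then yields simultaneously that convolution preserves the divisibility conditions (well-definedness of $\Cov'(G)$) and that $Q$ respects composition and identities. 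If you replace your ``granting this'' by this faithfulness argument (or, alternatively, actually carry out the Mackey-style computation you sketch), the rest of your proposal --- essential surjectivity, the monoidal compatibility via $k_{S_1}\otimes k_{S_2}\simeq k_{S_1\times S_2}$, and the passage to $\Hecke'(G)$ by restricting \eqref{eq:maps1} to double cosets, matching the index conditions with the factors in \eqref{eq:product}, and closing under finite direct sums --- coincides with the paper's proof.
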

In what follows, we will equip $\Hecke'(G)$ with the symmetric monoidal structure inherited from 
$\Cov'(G)$ under the equivalence of Theorem \ref{thm:main2}.
\begin{corollary}\label{cor:center}
The assignment $A \mapsto Z(A)$ (where $Z(A)$ denotes the center of $A$) can be extended to an additive symmetric monoidal functor $ Z: \Sep(k) \to \CSep(k)$. This functor is moreover a retraction of the inclusion $\CSep(k) \subset \Sep(k)$.
\end{corollary}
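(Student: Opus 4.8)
The plan is to deduce the corollary from Theorem~\ref{thm:main2} and Proposition~\ref{thm:main1} by transporting an evident forgetful functor between the combinatorial models. Concretely, I would introduce
$$ F\colon \Cov'(G) \too \Cov(G)\,, \qquad (S,A) \longmapsto S\,, $$
which on morphisms sends $\alpha \in \Map^{G,A,B}(S_1\times S_2,\bbZ)$ to the same function regarded as an element of $\Map^G(S_1\times S_2,\bbZ)$. This makes sense precisely because $\Map^{G,A,B}(S_1\times S_2,\bbZ)\subseteq \Map^G(S_1\times S_2,\bbZ)$ by construction, and since the composition law and the identities of $\Cov'(G)$ are by definition those of $\Cov(G)$, the assignment $F$ is a well-defined functor. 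It is additive because it sends $(S_1,A)\oplus(S_2,B)=(S_1\amalg S_2, A\times B)$ to $S_1\amalg S_2$, and it is strict symmetric monoidal because it sends $(S_1,A)\otimes(S_2,B)=(S_1\times S_2,A\otimes B)$ to $S_1\times S_2$, compatibly with the symmetry constraints.

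Next I would define $Z$ as the composite of $F$ with the equivalence $\Sep(k)\stackrel{\simeq}{\too}\Cov'(G)$ of Theorem~\ref{thm:main2} and the equivalence $\Cov(G)\stackrel{\simeq}{\too}\CSep(k)$ of Proposition~\ref{thm:main1}; this is automatically an additive symmetric monoidal functor. To justify the notation I would trace an object $U(A)$ through these equivalences: writing $A\simeq\prod_i M_{r_i\times r_i}(D_i)$ with each $D_i$ a finite dimensional division $k$-algebra of center $k_{\mathrm{sep}}^{H_i}$, the construction underlying Theorem~\ref{thm:main2} sends $U(A)$ to a pair $(S,\cA)$ whose underlying $G$-set $S$ satisfies $k_S\simeq\prod_i k_{\mathrm{sep}}^{H_i}=Z(A)$ and whose Azumaya $k_S$-algebra $\cA$ is assembled from the $D_i$; applying $F$ discards $\cA$, and Proposition~\ref{thm:main1} carries $S$ back to $U(k_S)=U(Z(A))$. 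Hence $Z(U(A))\simeq U(Z(A))$, naturally in $A$. It is worth stressing here that $A\mapsto Z(A)$ is \emph{not} literally well defined on objects of $\Sep(k)$, since non-isomorphic separable algebras may have isomorphic noncommutative motives (Remark~\ref{rem:scot}); this is exactly why one routes the definition of $Z$ through the combinatorial models instead of attempting a direct definition.

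For the retraction statement I would observe that a commutative separable $k$-algebra $k_S$ is Azumaya over itself with trivial Brauer class, so all the indices $\mathrm{ind}_{(s_1,s_2)}(k_{S_1}^{\op}\otimes k_{S_2})$ equal $1$ and the constraint defining $\Map^{G,A,B}$ is vacuous. Therefore $S\mapsto (S,k_S)$ is a fully faithful strict symmetric monoidal functor $\Cov(G)\hookrightarrow\Cov'(G)$ with $F\circ(S\mapsto(S,k_S))=\id_{\Cov(G)}$; since this functor corresponds, under the equivalences above, to the inclusion $\CSep(k)\subseteq\Sep(k)$, it follows that $Z|_{\CSep(k)}$ is naturally isomorphic to the identity. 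The main obstacle is not any of these formal manipulations but the bookkeeping in the second paragraph: one must verify that the equivalence of Theorem~\ref{thm:main2}, restricted to the full subcategory of commutative separable algebras, recovers the equivalence $\CSep(k)\simeq\Cov(G)$ of Proposition~\ref{thm:main1} (equivalently, that the ``underlying $G$-set'' attached to a separable algebra $A$ is the one attached to $Z(A)$). This compatibility is immediate from the way the two equivalences are built, but it is the one point that must be checked rather than merely asserted.
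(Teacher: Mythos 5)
Your proposal is correct and follows essentially the same route as the paper: it defines the forgetful functor $\Cov'(G)\to\Cov(G)$, $(S,A)\mapsto S$, and the section $S\mapsto (S,k_S)$, notes their composite is the identity, and transports both through the equivalences of Theorem \ref{thm:main2} and Proposition \ref{thm:main1} to obtain $Z$ and the retraction property. The extra care you take in checking that the equivalence of Theorem \ref{thm:main2} restricts on commutative separable algebras to that of Proposition \ref{thm:main1} is exactly the compatibility the paper asserts when identifying \eqref{eq:functor-1} with $U(A)\mapsto U(Z(A))$ and \eqref{eq:functor-2} with the inclusion $\CSep(k)\subset\Sep(k)$.
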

\begin{proof}
Note that we have the canonical forgetful functor 
\begin{eqnarray}\label{eq:functor-1}
\Cov'(G) \too \Cov(G) && (S,A) \mapsto S
\end{eqnarray}
as well as the inclusion of categories
\begin{eqnarray}\label{eq:functor-2}
\Cov(G) \too \Cov'(G) && S\mapsto (S, k_S)\,.
\end{eqnarray}
Under the equivalences $\CSep(k) \simeq \Cov(G)$ and $\Sep(k)\simeq \Cov'(G)$, \eqref{eq:functor-1} (resp. \eqref{eq:functor-2}) identifies with the assignment $U(A) \mapsto U(Z(A))$ on objects (resp. with the inclusion of categories $\CSep(k) \subset \Sep(k)$). Therefore, the claim follows from the fact that the functors \eqref{eq:functor-1}-\eqref{eq:functor-2} are additive symmetric monoidal and from the equality $\eqref{eq:functor-1} \circ \eqref{eq:functor-2}=\Id$. 
\end{proof}
%-------------------------------------------------------------------------------
\subsection*{Central simple algebras}
%-------------------------------------------------------------------------------
Let $\CSA(k)$ be the full subcategory of $\Sep(k)$ consisting of the objects $U(A)$ with $A$ a central simple $k$-algebra, and $\CSA(k)^\oplus$ its closure under finite direct sums. Note that $\CSA(k)^\oplus$ is an additive symmetric monoidal subcategory of $\Sep(k)$. Moreover, by unraveling the above definitions, it is easy to see that we have the following 2-cartesian square of categories:
\begin{equation}\label{eq:diagram}
\xymatrix{
\{U(k)^{\oplus n}\,|\, n \geq 0\} \ar[d] \ar[r]  \ar@{}[dr]|{\ulcorner} & \CSA(k)^\oplus \ar[d] \\
\CSep(k) \ar[r] & \Sep(k)\,.
}
\end{equation}
Intuitively speaking, \eqref{eq:diagram} shows that the categories $\CSep(k)$ and $\CSA(k)^\oplus$ are ``orthogonal'', encoding respectively the commutative and the noncommutative
information. As a consequence of \cite[Thm.~2.1]{TV}, we have $U(l)_\bbQ \simeq
U(B)_\bbQ$ for every finite separable field extension $l/k$ and
central simple $l$-algebra $B$. Therefore, we obtain the following equivalences of categories:
\begin{eqnarray*}\label{eq:rational}
\{U(k)^{\oplus n}_\bbQ\,|\, n \geq 0\} \simeq \CSA(k)_\bbQ^\oplus && \CSep(k)_\bbQ \simeq \Sep(k)_\bbQ\,.
\end{eqnarray*}
Roughly speaking, the rational noncommutative information disappears!
\begin{remark}[Dg Azumaya algebras]
The classical notion of Azumaya algebra can be generalized to the differential graded setting; see Appendix \ref{app:dgAzumaya}. In {\em loc. cit.} we establish some properties of these dg Azumaya algebras and compute their noncommutative motives. In particular, we show that in this generality the rational noncommutative information does {\em not}~disappear. 
\end{remark}

Recall from \cite[Prop.~4.1.16]{Gille} that every central simple $k$-algebra $A$ admits a $p$-primary decomposition $A= \otimes_{p \in \cP} A^p$. Here, $\cP$ stands for the prime numbers and $A^p$ is characterized by the fact that its index is the $p$-primary component of $\mathrm{ind}(A)$.

As proved in \cite[Thm.~9.1]{twisted}, the following equivalence holds
\begin{equation}\label{eq:Brauer}
U(A) \simeq U(B) \Leftrightarrow [A]=[B]
\end{equation}
for any two central simple $k$-algebras $A$ and $B$. The following result extends the above equivalence \eqref{eq:Brauer} to a complete dictionary between objects of the category $\CSA(k)^\oplus$ and sequences of elements in the Brauer group $\mathrm{Br}(k)$.
\begin{theorem}
\label{thm:comprehensive}
The following holds:
\begin{enumerate}
\item[(i)] The category $\CSA(k)^{\oplus}$ is idempotent complete.
\item[(ii)] The indecomposable objects  in $\CSA(k)^{\oplus}$ are of the form $U(B)$.
\item[(iii)] Given central simple $k$-algebras $A_1, \ldots, A_n$, the indecomposable direct summands of $U(A_1) \oplus \cdots \oplus U(A_n)$ are of the form $U(B)$ with $B$ a central simple $k$-algebra satisfying the following condition: for every $p \in \cP$ there exists an integer $\varrho_p \in \{1, \ldots, n\}$ such that $[B^p]=[A^p_{\varrho_p}]$. Consequently, the Brauer class $[B]$ belongs to the subgroup of $\mathrm{Br}(k)$ generated by $\{[A_1], \ldots, [A_n]\}$. 
\item[(iv)]
Given central simple $k$-algebras $A_1, \ldots, A_n$ and $B_1, \ldots, B_m$, the following two conditions (a)-(b) are equivalent:
\begin{itemize}
\item[(a)] We have an isomorphism of noncommutative motives:
\begin{equation}\label{eq:sums}
U(A_1) \oplus \cdots \oplus U(A_n) \simeq U(B_1) \oplus \cdots \oplus U(B_m)\,.
\end{equation}
\item[(b)] The equality $n=m$ holds and for every $p\in \cP$ there exists a permutation $\sigma_p$ (which depends on $p$) such that $[B_i^p]=[A^p_{\sigma_p(i)}]$ for every $1 \leq i \leq n$.
\end{itemize}
\end{enumerate}
\end{theorem}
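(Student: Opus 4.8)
The plan is to work entirely inside the explicit model $\Sep(k)\simeq\Hecke'(G)$ of Theorem~\ref{thm:main2}, which on $\CSA(k)^{\oplus}$ becomes completely concrete. First I would replace each $A_i$ by its division part $D_i$ (using $U(M_{r\times r}(D))\simeq U(D)$ and $[A_i]=[D_i]$); then $U(A_1)\oplus\cdots\oplus U(A_n)$ corresponds to $\bigoplus_i(G,D_i)$, and since every occurring subgroup is $G$, all double cosets collapse to a point and one gets
$$
\Hom\big(U(D),U(D')\big)=\mathrm{ind}(D^{\mathrm{op}}\otimes D')\cdot\bbZ\subseteq\bbZ ,
$$
with composition of morphisms being multiplication of integers (hence ordinary matrix multiplication between direct sums; well defined since $\mathrm{ind}$ is submultiplicative under addition of Brauer classes). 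In particular $\End(U(D))=\bbZ$ has no nontrivial idempotents, so each $U(B)$ is indecomposable --- half of~(ii). Moreover $R:=\End\big(U(D_1)\oplus\cdots\oplus U(D_n)\big)$ is the subring of $M_n(\bbZ)$ whose $(i,j)$-entry lies in $\mathrm{ind}(D_i^{\mathrm{op}}\otimes D_j)\bbZ$; as each entry-ideal is nonzero, $R\otimes_\bbZ\bbQ=M_n(\bbQ)$, i.e.\ $R$ is a full $\bbZ$-order in the matrix algebra $M_n(\bbQ)$. This last point is what will ultimately force the good behaviour in~(i).

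Next I would prove an \emph{exchange lemma}: if $P_1,P_2$ are central simple $k$-algebras of index a power of a fixed prime $p$ and $Q_1,Q_2$ are central simple $k$-algebras of index prime to $p$, then
$$
U(P_1\otimes Q_1)\oplus U(P_2\otimes Q_2)\;\simeq\;U(P_1\otimes Q_2)\oplus U(P_2\otimes Q_1).
$$
Using the $\Hom$-formula and the coprimality $\gcd\!\big(\mathrm{ind}(P_1^{\mathrm{op}}\otimes P_2),\mathrm{ind}(Q_1^{\mathrm{op}}\otimes Q_2)\big)=1$, a morphism between these objects and a candidate inverse are both $2\times2$ integer matrices of a prescribed divisibility pattern, and a Bézout relation $\mathrm{ind}(P_1^{\mathrm{op}}\otimes P_2)^2x-\mathrm{ind}(Q_1^{\mathrm{op}}\otimes Q_2)^2y=1$ exhibits a mutually inverse pair. (This is the one genuine computation, and it is short.) Iterating the lemma over the finitely many primes dividing the relevant indices lets one freely transpose $p$-primary parts between the slots of a direct sum without changing its isomorphism class; combined with the $p$-primary decomposition $A=\bigotimes_{p\in\cP}A^p$ and equivalence~\eqref{eq:Brauer}, this gives (iv)(b)$\Rightarrow$(iv)(a). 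Applying it to rearrange, for each $p$, the multiset $\{[A_i^p]\}_i$ into an arbitrary labelling also produces, for every object, a decomposition $U(A_1)\oplus\cdots\oplus U(A_n)\simeq U(B_1)\oplus\cdots\oplus U(B_n)$ into $n$ indecomposables $U(B_l)$ with each $[B_l^p]$ among $[A_1^p],\dots,[A_n^p]$; since $[A_i^p]\in\langle[A_i]\rangle$, this is the concrete content of~(iii) and the asserted containment of $[B_l]$.

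For the converse (iv)(a)$\Rightarrow$(iv)(b) I would base change the $\Hom$-groups along $\bbZ\to\bbZ_p$. Then $\End_{\bbZ_p}\big(\bigoplus_iU(D_i)\big)$ is a finite free $\bbZ_p$-module, hence a module-finite algebra over the complete discrete valuation ring $\bbZ_p$, hence semiperfect, so Krull--Schmidt holds for its finitely generated projective modules. Grouping the $D_i$ by their class in $\Br(k)\{p\}$, this endomorphism ring modulo its radical is $\prod_\beta M_{n_\beta}(\bbF_p)$ with $n_\beta=\#\{i:[D_i^p]=\beta\}$, and the indecomposable $\bbZ_p$-summands are the $U(D_i)_{\bbZ_p}$, two of which are isomorphic exactly when their $p$-primary classes agree. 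Hence an isomorphism~\eqref{eq:sums} forces $n=m$ and, for every $p$, equality of the multisets $\{[A_i^p]\}_i$ and $\{[B_j^p]\}_j$, which is~(iv)(b); and the same local analysis shows any indecomposable summand of $U(A_1)\oplus\cdots\oplus U(A_n)$ has, at each $p$, its $p$-primary class among the $[A_i^p]$.

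The hard part will be~(i), which simultaneously upgrades~(ii) and~(iii) to their full form. Equivalently one must show that every direct summand $Y$ of $U(D_1)\oplus\cdots\oplus U(D_n)$ in $\NChow(k)$ again lies in $\CSA(k)^{\oplus}$. Such a $Y$ is given by an idempotent $e\in R$; its rationalisation (using $U(D)_\bbQ\simeq U(k)_\bbQ$ and $\CSA(k)^{\oplus}_\bbQ\simeq\{U(k)_\bbQ^{\oplus s}\}$) and the $\bbZ_p$-local Krull--Schmidt analysis attach to the right $R$-module $eR$ a \emph{genus}: a rank $r$ (read off rationally) and, for each $p$, multiplicities $(a_\beta(p))_\beta$ with $\sum_\beta a_\beta(p)=r$. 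Choosing central simple $k$-algebras $C_1,\dots,C_r$ realising this data (possible precisely because the total $r$ is independent of $p$), the $R$-module $\bigoplus_lU(C_l)$ is again a summand of $\bigoplus_iU(D_i)$ and lies in the same genus as $eR$. The remaining and delicate point is to conclude $eR\cong\bigoplus_lU(C_l)$ as $R$-modules: over a general $\bbZ$-order a genus can split into several isomorphism classes --- this is exactly the mechanism behind R.~Swan's example in Remark~\ref{rem:swan} and the failure of idempotent completeness for $\Perm(G)$. Here, though, $R$ is a full order in $M_n(\bbQ)$ (with $n\geq2$; the case $n\le1$ being trivial), an algebra with no anisotropic --- in particular no totally definite quaternion --- simple factor, so strong approximation for $\mathrm{SL}_n$, equivalently the Eichler--Jacobinski genus theory, makes the genus of a projective $R$-module a single isomorphism class. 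This yields $eR\cong\bigoplus_lU(C_l)$, hence $Y\in\CSA(k)^{\oplus}$ and~(i); feeding~(i) back into the previous paragraphs completes~(ii) and~(iii).
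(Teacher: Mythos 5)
Most of your architecture parallels the paper's: you describe $\End(\oplus_i U(A_i))$ as the tiled $\bbZ$-order with $(i,j)$-entry $\mathrm{ind}(A_i^\op\otimes A_j)\bbZ$, analyse it one prime at a time via Krull--Schmidt over $\bbZ_p$, and reduce everything to a local-global principle for projective modules over that order; your B\'ezout ``exchange lemma'' is a correct and rather pleasant elementary substitute for the paper's route to (iv)(b)$\Rightarrow$(a) (it is essentially Proposition~\ref{prop:relations2}, which the paper deduces from (iv) rather than the other way round), and your semiperfect-quotient computation identifying when $U(D_i)_{\bbZ_p}\simeq U(D_j)_{\bbZ_p}$ is sound and plays the role of Lemma~\ref{lem:Brauer2}.

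The genuine gap is the decisive step of (i): the claim that, because $R$ is a full $\bbZ$-order in $M_n(\bbQ)$ with $n\ge 2$ (so the Eichler condition holds and $\mathrm{SL}_n$ has strong approximation), ``the genus of a projective $R$-module is a single isomorphism class.'' Eichler--Jacobinski theory does not give this: it gives cancellation and identifies the classes inside a genus with a class group of the order, and that class group need not vanish for a full order in a split matrix algebra. Concretely, take $\Lambda=\bbZ+pM_2(\bbZ)\subset M_2(\bbQ)$ with $p\equiv 1\pmod 4$: the completion $\Lambda_p$ is local with $\mathrm{nr}(\Lambda_p^\times)=(\bbZ_p^\times)^2(1+p\bbZ_p)$, an index-two subgroup of $\bbZ_p^\times$ containing $-1$, so the locally free class group of $\Lambda$ is $\bbZ/2$ and the genus of the projective module $\Lambda$ itself already contains two isomorphism classes. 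Hence ``full order in $M_n(\bbQ)$ plus Eichler'' is not what forces the good behaviour; what is needed is the specific shape of $\Lambda(A_1,\ldots,A_n)$ --- diagonal entries equal to $\bbZ$, so the matrix idempotents $e_{ii}$ lie in the order, every projective is locally a direct sum of rows, and the local automorphism groups have reduced norm all of $\bbZ_p^\times$ --- and this is exactly Arnold's finite global Azumaya theorem \cite{Arnold} which the paper invokes (Theorems~\ref{thm:Arnoldthm} and~\ref{thm:Arnoldcor}, together with the gluing statement of Proposition~\ref{prop:localp}, which you also need implicitly when you realize your genus data by $C_1,\ldots,C_r$). With that theorem in place your outline of (i), and hence the full forms of (ii) and (iii), goes through; with only the justification you give, the step fails.
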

\begin{corollary}\label{cor:generate-Brauer}
  The above isomorphism \eqref{eq:sums} implies that the Brauer
  classes $[A_1], \ldots, [A_n]$ and $[B_1], \ldots, [B_n]$ generate
  the same subgroup of $\mathrm{Br}(k)$. Consequently, we obtain a
  well-defined map
$ \mathrm{Iso}(\CSA(k)^\oplus )\to
  \{\mathrm{subgroups}\,\,\mathrm{of}\,\,\mathrm{Br}(k)\}$.
\end{corollary}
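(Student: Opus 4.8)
The plan is to derive the statement from Theorem~\ref{thm:comprehensive}(iv), combined with an elementary fact about the primary decomposition of torsion abelian groups. Recall that $\mathrm{Br}(k)$ is torsion, so it decomposes as the direct sum $\bigoplus_{p\in\cP}\mathrm{Br}(k)\{p\}$ of its $p$-primary components; moreover, if $A$ is a central simple $k$-algebra with $p$-primary decomposition $A=\otimes_{p\in\cP}A^p$ (as in \cite[Prop.~4.1.16]{Gille}), then $[A^p]\in\mathrm{Br}(k)\{p\}$, since the index of $A^p$ is the $p$-primary component of $\mathrm{ind}(A)$, and $[A]=\sum_{p}[A^p]$; thus $[A]$ corresponds to the family $([A^p])_{p}$ under the primary decomposition.

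The elementary input I would isolate is that, for Brauer classes $c_1,\dots,c_n\in\mathrm{Br}(k)$ with primary components $c_i=\sum_p c_{i,p}$, one has
\[
\langle c_1,\dots,c_n\rangle=\bigoplus_{p\in\cP}\langle c_{1,p},\dots,c_{n,p}\rangle\,.
\]
The inclusion ``$\subseteq$'' is clear; for ``$\supseteq$'' one observes that each component $c_{i,p}$ is an integral multiple of $c_i$ --- this follows from the Chinese Remainder Theorem applied to the (finite) order of $c_i$ --- and hence lies in $\langle c_1,\dots,c_n\rangle$. In particular
\[
\langle[A_1],\dots,[A_n]\rangle=\bigoplus_{p\in\cP}\langle[A_1^p],\dots,[A_n^p]\rangle\,,
\]
and similarly for the $B_i$'s.

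Now I would invoke Theorem~\ref{thm:comprehensive}(iv): the isomorphism \eqref{eq:sums} forces $n=m$ and, for each prime $p$, the existence of a permutation $\sigma_p$ with $[B_i^p]=[A^p_{\sigma_p(i)}]$ for all $i$. Consequently $\{[A_1^p],\dots,[A_n^p]\}=\{[B_1^p],\dots,[B_n^p]\}$ as subsets of $\mathrm{Br}(k)\{p\}$, so these families generate the same $p$-primary subgroup. Substituting into the previous display yields $\langle[A_1],\dots,[A_n]\rangle=\langle[B_1],\dots,[B_n]\rangle$, which is the first assertion. For the last assertion, every object of $\CSA(k)^\oplus$ is by construction a finite (possibly empty) direct sum $U(A_1)\oplus\cdots\oplus U(A_n)$ with each $A_i$ a central simple $k$-algebra; the assignment of such an object to the subgroup $\langle[A_1],\dots,[A_n]\rangle\subseteq\mathrm{Br}(k)$ (the empty sum going to the trivial subgroup) then descends to $\mathrm{Iso}(\CSA(k)^\oplus)$ precisely because of the first assertion, which guarantees independence of the chosen direct-sum decomposition.

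The argument is essentially formal once Theorem~\ref{thm:comprehensive} is in hand; the only step needing care is the displayed identity, i.e.\ the compatibility of subgroup generation with the primary decomposition of $\mathrm{Br}(k)$. It is worth stressing that one cannot short-circuit this by trying to match the multisets $\{[A_i]\}$ and $\{[B_i]\}$ directly: the permutations $\sigma_p$ really do depend on $p$, so in general these multisets differ even though they generate the same subgroup of $\mathrm{Br}(k)$.
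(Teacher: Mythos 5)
Your argument is correct and takes essentially the same route as the paper: the paper's (terse) proof likewise combines Theorem~\ref{thm:comprehensive}(iv)(b) with the $p$-primary decomposition $\mathrm{Br}(k)=\prod_{p\in\cP}\mathrm{Br}(k)\{p\}$, citing \cite[Prop.~4.5.13]{Gille} (index and period have the same prime factors) exactly where you implicitly use that the $p$-power index of $A^p$ forces $[A^p]\in\mathrm{Br}(k)\{p\}$. Your displayed identity $\langle c_1,\dots,c_n\rangle=\bigoplus_p\langle c_{1,p},\dots,c_{n,p}\rangle$, with the observation that each primary component is an integral multiple of the class, is precisely the elementary compatibility the paper leaves to the reader.
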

\begin{proof}
  Recall from \cite[Prop.~4.5.13]{Gille} that the index and the period
  of a central simple $k$-algebra have the same prime factors. Hence, the
  proof follows from the combination of Theorem
  \ref{thm:comprehensive}(iv)(b) with the $p$-primary decomposition
  $\mathrm{Br}(k) = \prod_{p \in \cP} \mathrm{Br}(k)\{p\}$ of the
  Brauer group; see \cite[Prop.~4.5.16]{Gille}.
\end{proof}
\begin{corollary}\label{cor:cancellation}
Given central simple $k$-algebras $A_1, \ldots, A_n, B_1, \ldots, B_n, C$, we have the following cancellation property:
$$ \oplus_{i=1}^n U(A_i) \oplus U(C) \simeq \oplus_{i=1}^n U(B_i) \oplus U(C) \Rightarrow \oplus_{i=1}^n U(A_i) \simeq \oplus^n_{i=1} U(B_i)\,.$$
\end{corollary}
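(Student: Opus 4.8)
The plan is to deduce the statement directly from Theorem~\ref{thm:comprehensive}(iv), reducing the categorical cancellation to an elementary cancellation of multisets of Brauer classes, carried out one prime at a time.

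First I would feed the hypothesis into Theorem~\ref{thm:comprehensive}(iv). The assumed isomorphism $\oplus_{i=1}^n U(A_i) \oplus U(C) \simeq \oplus_{i=1}^n U(B_i) \oplus U(C)$ is exactly condition~(a) of that theorem for the two length-$(n+1)$ lists $(A_1, \dots, A_n, C)$ and $(B_1, \dots, B_n, C)$. Hence condition~(b) holds: for every prime $p \in \cP$ there is a permutation $\sigma_p$ of $\{1, \dots, n+1\}$ matching, in $\mathrm{Br}(k)$, the $p$-primary Brauer classes of the two lists. In multiset language this says that, for each fixed $p$, the multiset $\{[A_1^p], \dots, [A_n^p], [C^p]\}$ of elements of $\mathrm{Br}(k)$ equals the multiset $\{[B_1^p], \dots, [B_n^p], [C^p]\}$.

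Next I would cancel. Multisets of elements of a fixed set form the free commutative monoid on that set, which is cancellative; removing one copy of the common element $[C^p]$ from both sides therefore yields $\{[A_1^p], \dots, [A_n^p]\} = \{[B_1^p], \dots, [B_n^p]\}$ as multisets. Equivalently, for every $p \in \cP$ there is a permutation $\tau_p$ of $\{1, \dots, n\}$ with $[B_i^p] = [A_{\tau_p(i)}^p]$ for all $1 \le i \le n$; note that for all but the finitely many primes dividing some $\mathrm{ind}(A_i)$ this permutation may be taken to be the identity. This is precisely condition~(b) of Theorem~\ref{thm:comprehensive}(iv) for the lists $(A_1, \dots, A_n)$ and $(B_1, \dots, B_n)$, so the implication (b)$\Rightarrow$(a) of that theorem gives $\oplus_{i=1}^n U(A_i) \simeq \oplus_{i=1}^n U(B_i)$, as desired.

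There is no serious obstacle: the whole substance is already packaged in Theorem~\ref{thm:comprehensive}(iv), and what remains is the observation that the matching data it produces is $p$-local, so the $C$-slot can be discarded independently for each prime. It is worth stressing, however, that this argument genuinely uses the classification of Theorem~\ref{thm:comprehensive} and not merely the idempotent completeness of $\CSA(k)^\oplus$ from part~(i): as Remark~\ref{rem:swan} shows, idempotent-complete additive categories built out of permutation modules need not satisfy cancellation, so some input specific to central simple algebras is indispensable.
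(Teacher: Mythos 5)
Your argument is correct and is essentially the paper's own proof: the paper simply compresses your multiset-cancellation step into the remark that the permutation $\sigma_p$ furnished by Theorem~\ref{thm:comprehensive}(iv)(b) can be chosen to fix the Brauer class $[C^p]$, after which (b)$\Rightarrow$(a) gives the conclusion exactly as you do. Your explicit unpacking via cancellation in the free commutative monoid of multisets of Brauer classes is a faithful elaboration of the same idea, not a different route.
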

\begin{proof}
The proof follows from the fact that for every $p \in \cP$ the permutation $\sigma_p$ of item (iv)(b) of Theorem \ref{thm:comprehensive} can be choosen to fix the Brauer class $[C^p]$. 
\end{proof}
\begin{remark}
Thanks to Theorem \ref{thm:comprehensive} and Corollary \ref{cor:cancellation}, the additive category $\CSA(k)^\oplus$ has none of the pathologies described in Remark \ref{rem:swan}.
\end{remark}
%In the particular case where $\sigma_p$ is independent of $p$, condition (i) reduces to the existence of a permutation $\sigma$ such that $U(B_i) \simeq U(A_{\sigma(i)})$ for every $1 \leq i \leq n$.
We will prove Theorem \ref{thm:comprehensive} via a ring theoretic description of certain full subcategories of $\CSA(k)^\oplus$. Given central simple $k$-algebras $A_1, \ldots, A_n$, let us denote by $\mathrm{CSA}(A_1, \ldots, A_n)$ the full subcategory  of $\Sep(k)$ consisting of the objects $U(A_1), \ldots, U(A_n)$. Its closure under finite direct sums (resp. finite direct sums and direct factors) will be denoted by $\mathrm{CSA}(A_1, \ldots, A_n)^\oplus$ (resp. $\mathrm{CSA}(A_1, \ldots, A_n)^{\oplus,\natural}$). Consider the following ring $\Lambda(A_1, \ldots, A_n)$ of $n \times n$ matrices
\begin{eqnarray*}
\Lambda(A_1, \ldots, A_n)_{i,j}:= \mu_{ij}\bbZ && 1 \leq i,j \leq n \,,
\end{eqnarray*}
where $\mu_{ij}:=\mathrm{ind}(A_i^\op \otimes A_j)$. Since $\mu_{ij}=\mu_{ji}$ and $\mu_{ii}=1$, we have
\begin{equation}\label{eq:matrix}
\Lambda(A_1, \ldots, A_n)
=
\begin{pmatrix}
\bbZ & \mu_{12} \bbZ & \cdots & \mu_{1n} \bbZ\\
\mu_{12}\bbZ &\bbZ &\cdots &\mu_{2n}\bbZ\\
\vdots&\vdots&\ddots&\vdots\\
\mu_{1n}\bbZ&\mu_{2n}\bbZ&\cdots &\bbZ 
\end{pmatrix}_{n \times n}\,.
\end{equation}
\begin{proposition}\label{prop:ring}
The following holds:
\begin{itemize}
\item[(i)] We have a ring isomorphism $\mathrm{End}(\oplus_iU(A_i))\simeq \Lambda(A_1, \ldots, A_n)$.
\item[(ii)] We have an additive equivalence\footnote{This equivalence holds also when $n$ is infinite, provided we replace the ring $\Lambda(A_1, \ldots, A_n)$ by a category. We leave these straightforward generalizations to the reader.} of categories
\begin{equation*}\label{eq:CSA}
\varphi:\mathrm{CSA}(A_1, \ldots, A_n)^{\oplus, \natural} \simeq\mathrm{Proj}(\Lambda(A_1, \ldots, A_n)) \,\,\,\,\,\,\,\,\, U(A) \mapsto \Hom(\oplus_i U(A_i), U(A))\,,
\end{equation*}
where $\mathrm{Proj}$ stands for the category of finitely generated projective right modules.
\end{itemize}
\end{proposition}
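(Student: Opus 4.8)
The plan is to deduce part (i) from the explicit description of the Hecke category $\Hecke'(G)$ supplied by Theorem \ref{thm:main2}, and then to obtain part (ii) from the classical ``projectivization'' equivalence available in any idempotent complete additive category.

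For part (i), I would start from the additive equivalence $\Sep(k) \simeq \Hecke'(G)$ of Theorem \ref{thm:main2}(ii), under which a central simple $k$-algebra $A_i$ (so that $Z(A_i) = k = k_{\mathrm{sep}}^G$) is sent to the pair $(G, A_i)$, up to the Morita reduction $U(A_i) \simeq U(D_i)$ to its underlying division algebra. The double coset space $G \backslash G / G$ has a single element, contributing the field $l_{\overline{1}} = k$ and the central simple $k$-algebra $A_i^{\op} \otimes A_j$, whose Wedderburn form $M_r(D)$ satisfies $\mathrm{ind}_k(D) = \mathrm{ind}(A_i^{\op} \otimes A_j) = \mu_{ij}$; hence the description of the morphisms in $\Heck'(G)$ gives $\Hom_{\Sep(k)}(U(A_i), U(A_j)) \simeq \mu_{ij}\bbZ \subseteq \bbZ$. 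Moreover, for a triple of copies of $G$ the convolution product of $\Heck(G)$ reduces to ordinary multiplication of integers, and the identities to $1 \in \bbZ$. Therefore $\End_{\Sep(k)}\big(\bigoplus_i U(A_i)\big)$ is precisely the ring of $n \times n$ matrices $(\lambda_{ij})$ with $\lambda_{ij} \in \mu_{ij}\bbZ$ under matrix multiplication; the fact that this set of matrices is closed under multiplication, i.e.\ $\mu_{ij} \mid \mu_{ik}\mu_{kj}$, follows from the relation $[A_i^{\op} \otimes A_j] = [A_i^{\op} \otimes A_k] + [A_k^{\op} \otimes A_j]$ in $\Br(k)$ together with sub-multiplicativity of the index, and is in any case already built into the well-definedness of $\Heck'(G)$. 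This identifies $\End_{\Sep(k)}(\bigoplus_i U(A_i))$ with $\Lambda(A_1, \ldots, A_n)$. (Alternatively, one can run this directly from the known identification $\Hom_{\NChow(k)}(U(A), U(B)) \simeq K_0(A^{\op} \otimes B)$, using that $A^{\op}\otimes B$ is again central simple so that $K_0(A^{\op}\otimes B) \simeq \bbZ$; the only subtle point is the normalisation of this isomorphism — obtained by base change to $k_{\mathrm{sep}}$, which turns the simple $A_i^{\op}\otimes A_j$-module into $\mu_{ij}$ copies of a simple split module — and this is exactly the data recorded by the $\Hecke'$-model.)

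For part (ii), I would invoke the following standard fact: if $\cA$ is an idempotent complete additive category, $P \in \cA$, and $\Lambda := \End_\cA(P)$, then the additive functor $\Hom_\cA(P, -)$ restricts to an equivalence between the full subcategory $\mathrm{add}_\cA(P) \subseteq \cA$ of direct summands of finite direct sums of $P$ and the category $\mathrm{Proj}(\Lambda)$ of finitely generated projective right $\Lambda$-modules. Indeed, $\Hom_\cA(P^{\oplus m}, P^{\oplus n}) \cong M_{n \times m}(\Lambda) \cong \Hom_\Lambda(\Lambda^{m}, \Lambda^{n})$ compatibly with composition, so $\Hom_\cA(P,-)$ is fully faithful on the $P^{\oplus n}$; passing to retracts preserves full faithfulness, and since every finitely generated projective right $\Lambda$-module is the image of an idempotent of some $M_n(\Lambda) = \End_\cA(P^{\oplus n})$ — realised, $\cA$ being idempotent complete, by a genuine direct summand of $P^{\oplus n}$ — the functor is essentially surjective onto $\mathrm{Proj}(\Lambda)$. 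Applying this with $\cA = \NChow(k)$ (idempotent complete by construction), $P = \bigoplus_i U(A_i)$ and $\Lambda = \Lambda(A_1,\ldots,A_n)$ by (i), and observing that $\mathrm{add}_{\NChow(k)}(P) = \mathrm{CSA}(A_1, \ldots, A_n)^{\oplus, \natural}$, yields the desired equivalence $\varphi = \Hom(\bigoplus_i U(A_i), -)$.

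The substantive input is Theorem \ref{thm:main2}: granted its Hecke-style description of the morphisms between the $U(A_i)$, part (i) is matrix bookkeeping and part (ii) is formal. If one wanted (i) without citing Theorem \ref{thm:main2}, the main obstacle would be precisely the identification of the composition pairing $K_0(A_i^{\op} \otimes A_j) \times K_0(A_j^{\op} \otimes A_l) \to K_0(A_i^{\op} \otimes A_l)$, induced by tensoring bimodules, with multiplication $\mu_{ij}\bbZ \times \mu_{jl}\bbZ \to \mu_{il}\bbZ$; verifying this amounts to tracking $k$-dimensions of simple modules over central simple algebras (a simple module over a central simple algebra $C$ has $k$-dimension $\deg(C)\,\mathrm{ind}(C)$) and comparing with the split picture over $k_{\mathrm{sep}}$, which is what makes the clean formula $\Lambda_{ij} = \mu_{ij}\bbZ$ nontrivial.
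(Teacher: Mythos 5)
Your proof is correct and follows essentially the paper's own route: part (i) is deduced from Theorem \ref{thm:main2} (you read off the Hom-groups and composition from the $\Hecke'(G)$ model with $H=K=G$, whereas the paper uses the equivalent $\Cov'(G)$ model with $S$ a singleton, giving the same identification $\Hom(U(A_i),U(A_j))\simeq \mu_{ij}\bbZ$ with composition equal to multiplication). For part (ii) the paper simply declares the argument standard; your spelled-out projectivization equivalence $\mathrm{add}(P)\simeq \mathrm{Proj}(\End(P))$, using idempotent completeness of $\NChow(k)$, is exactly the intended argument and is correct.
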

\begin{remark}
The ring $\Lambda(A_1,\ldots,A_n)$ is a so-called {\em $\bbZ$-order}, \ie it is free of finite type as a $\bbZ$-module and the quotient ring is a central simple $\bbQ$-algebra. This class of rings plays a central role in integral representation theory; see Curtis-Reiner \cite{CR} for instance.  We were quite 
intrigued by the above connection
  between the classical theory of $\bbZ$-orders and the recent theory of noncommutative motives.
\end{remark}
Assume now that the Brauer classes $[A_1], \ldots, [A_n]$ form a subgroup $H$ of $\mathrm{Br}(k)$. This implies that the category $\CSA(A_1, \ldots, A_n)$ is symmetric monoidal and that $\CSA(A_1, \ldots, A_n)^\oplus$ and $\CSA(A_1, \ldots, A_n)^{\oplus, \natural}$ are additive symmetric monoidal.
\begin{proposition}\label{prop:groupcase}
Under the above assumption, the indecomposable objects in $\CSA(A_1, \ldots, A_n)^{\oplus,\natural}$ are of the form $U(A_i)$. Consequently, $\CSA(A_1, \ldots, A_n)^\oplus$ is idempotent complete.
\end{proposition}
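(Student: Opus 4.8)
The plan is to push the statement, via Proposition~\ref{prop:ring}(ii), into a question about finitely generated projective modules over the $\bbZ$-order $\Lambda:=\Lambda(A_1,\ldots,A_n)$, and then to answer that question with the integral representation theory of orders (semiperfect localizations, Jacobinski's cancellation theorem and genus theory). Since $U(A_i)\simeq U(A_j)$ whenever $[A_i]=[A_j]$ (by \eqref{eq:Brauer}), we may assume the $[A_i]$ are pairwise distinct, so that $\{[A_1],\ldots,[A_n]\}=H$ and $n=|H|$; write $e_i\in\Lambda$ for the standard orthogonal idempotents ($\sum_ie_i=1$, $e_i\Lambda e_i=\bbZ$) and $P_i:=e_i\Lambda$, so that $\varphi(U(A_i))=P_i$. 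It suffices to prove: (a) $P_1,\ldots,P_n$ are pairwise non-isomorphic indecomposable projectives; (b) every finitely generated projective right $\Lambda$-module is isomorphic to $\bigoplus_iP_i^{\oplus m_i}$ for some integers $m_i\geq 0$. Indeed, (a) together with (b) forces the indecomposable objects of $\mathrm{Proj}(\Lambda)\simeq\CSA(A_1,\ldots,A_n)^{\oplus,\natural}$ to be exactly $P_1,\ldots,P_n$, i.e. the $U(A_i)$; and (b) identifies the essential image of $\CSA(A_1,\ldots,A_n)^{\oplus}$ under $\varphi$ with all of $\mathrm{Proj}(\Lambda)$, so the inclusion $\CSA(A_1,\ldots,A_n)^{\oplus}\hookrightarrow\CSA(A_1,\ldots,A_n)^{\oplus,\natural}$ is an equivalence, which is exactly idempotent completeness.

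\textbf{Step 1: assertion (a).} This is elementary. We have $\End_\Lambda(P_i)=e_i\Lambda e_i=\bbZ$, which has no non-trivial idempotents, so $P_i$ is indecomposable. For $i\neq j$ one has $\Hom_\Lambda(P_i,P_j)=e_j\Lambda e_i=\mu_{ij}\bbZ$ with $\mu_{ij}=\mathrm{ind}(A_i^{\op}\otimes A_j)\geq 2$ (because $[A_i]\neq[A_j]$), so the two composites of a hypothetical pair of mutually inverse isomorphisms $P_i\rightleftarrows P_j$ would lie in $\mu_{ij}^2\bbZ\subsetneq\bbZ$, which is impossible; hence $P_i\not\simeq P_j$.

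\textbf{Step 2: local structure and reduction to genus (needs the group hypothesis).} Fix a prime $p$. The ring $\Lambda\otimes\bbZ_p$ is module-finite over the complete local ring $\bbZ_p$, hence semiperfect; since $e_i(\Lambda\otimes\bbZ_p)e_i=\bbZ_p$ is local, each $e_i$ stays primitive over $\Lambda\otimes\bbZ_p$, so the $P_i\otimes\bbZ_p$ exhaust (with multiplicities) the indecomposable projective $\Lambda\otimes\bbZ_p$-modules, and every finitely generated projective $\Lambda\otimes\bbZ_p$-module is a direct sum of them. Moreover $P_i\otimes\bbZ_p\simeq P_j\otimes\bbZ_p$ precisely when $p\nmid\mu_{ij}$, i.e. (index and period having the same prime factors) when $h_i$ and $h_j$ have the same $p$-primary component in $H=\prod_q H_q$; thus the distinct indecomposable projectives over $\Lambda\otimes\bbZ_p$ are naturally indexed by $H_p$, and $\Lambda\otimes\bbZ_p=M_n(\bbZ_p)$ for $p\nmid|H|$. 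Consequently, for a finitely generated projective $\Lambda$-module $P$ of rank $r$ (meaning $P\otimes_\bbZ\bbQ$ is $r$ copies of the simple $M_n(\bbQ)$-module), its local type at $p$ is a multiplicity function $\nu_p\colon H_p\to\bbZ_{\geq 0}$ with $\sum\nu_p=r$, non-trivial only for the finitely many $p\mid|H|$. Because $H=\prod_{p\mid|H|}H_p$, a routine combinatorial argument — choose, for each such $p$, a length-$r$ sequence in $H_p$ realizing $\nu_p$, and combine coordinatewise into a length-$r$ sequence in $H$ — produces integers $m_i\geq 0$ with $\sum m_i=r$ such that $\bigoplus_iP_i^{\oplus m_i}$ has the same local type as $P$ at every prime, i.e. lies in the same genus as $P$. (This is exactly where the hypothesis that the $[A_i]$ form a \emph{group} is used: it makes the combined reduction $H\to\prod_p H_p$ a bijection, so that every consistent family of local types is realized by some $\bigoplus_iP_i^{\oplus m_i}$; for a non-group family of classes genuinely ``exotic'' projectives, not of this form, would appear.)

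\textbf{Step 3: from genus to isomorphism; assertion (b).} Since $\Lambda\otimes\bbQ=M_n(\bbQ)$ is split, it satisfies the Eichler condition, so by Jacobinski's cancellation theorem the isomorphism classes of projective $\Lambda$-lattices inside a fixed genus form a torsor under the locally free class group $\mathrm{Cl}(\Lambda)$. Finally $\mathrm{Cl}(\Lambda)=0$: by the idelic description $\mathrm{Cl}(\Lambda)\cong\bbQ^\times\backslash\bbA_f^\times/\prod_p\mathrm{Nrd}((\Lambda\otimes\bbZ_p)^\times)$ (the reduced norm being the determinant, as $M_n(\bbQ)$ is split), and since $\mu_{ii}=1$ the order $\Lambda\otimes\bbZ_p$ contains the diagonal subgroup $\{\mathrm{diag}(u_1,\ldots,u_n):u_i\in\bbZ_p^\times\}$, whose determinants fill $\bbZ_p^\times$; hence $\prod_p\mathrm{Nrd}((\Lambda\otimes\bbZ_p)^\times)=\widehat{\bbZ}^\times$ and $\mathrm{Cl}(\Lambda)=\bbQ^\times\backslash\bbA_f^\times/\widehat{\bbZ}^\times=0$. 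Combining with Step 2, $P\simeq\bigoplus_iP_i^{\oplus m_i}$, which is (b), and the proposition follows. The crux of the whole argument is the vanishing $\mathrm{Cl}(\Lambda)=0$ — this is what rules out Swan-type exotic projectives, and it works here because $\Lambda$, being a full tiled matrix order containing the diagonal torus, has reduced norms filling $\bbZ_p^\times$ at every prime; everything else is either formal or a direct appeal to the theory of $\bbZ$-orders (Curtis--Reiner).
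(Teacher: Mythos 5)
Your proposal is correct in substance, but it takes a genuinely different route from the paper. The paper obtains Proposition \ref{prop:groupcase} as a two-line corollary of the proof of Theorem \ref{thm:comprehensive}: indecomposability of the $U(A_i)$ comes from $\End(U(A_i))\simeq\bbZ$, and any indecomposable $M$ is identified, via Arnold's local--global theorems for the tiled order $\Lambda$ (Theorems \ref{thm:Arnoldthm} and \ref{thm:Arnoldcor}), with $U(B)$ for $B=\otimes_p A^p_{\varrho_p}$; the group hypothesis is then used only to see that $[B]\in\{[A_1],\ldots,[A_n]\}$, so $U(B)\simeq U(A_i)$ by \eqref{eq:Brauer}. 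You instead bypass Theorem \ref{thm:comprehensive} and Arnold's results altogether and reconstruct the needed local--global input from standard order theory: semiperfectness of $\Lambda\otimes\bbZ_p$ and primitivity of the diagonal idempotents give the local classification, your coordinatewise gluing over $H=\prod_pH_p$ (which is exactly where the group hypothesis enters, and which parallels the paper's Proposition \ref{prop:localp}) produces a sum of rows in the same genus, and Eichler/Jacobinski genus theory plus a vanishing class-group computation upgrades genus equality to isomorphism. This buys a stronger statement -- every finitely generated projective $\Lambda$-module is a direct sum of the rows $P_i$, i.e.\ $\varphi$ is essentially surjective already on $\CSA(A_1,\ldots,A_n)^{\oplus}$, which in the group case subsumes Theorem \ref{thm:comprehensive}(i) -- at the cost of heavier machinery, whereas the paper's argument is shorter because it recycles work already done. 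One imprecision should be fixed: for projectives that are not locally free, the isomorphism classes in a genus are not literally a torsor under the locally free class group $\mathrm{Cl}(\Lambda)$; the Fr\"ohlich--Jacobinski parametrization uses the idelic quotient $\bbQ^\times\backslash\bbA_f^\times/\prod_p\mathrm{nr}\bigl(\mathrm{Aut}_{\Lambda\otimes\bbZ_p}(P\otimes\bbZ_p)\bigr)$ (equivalently, a class group of $\End_\Lambda(P)$) rather than of $\Lambda$ itself. Your diagonal-torus computation applies verbatim to $\End_{\Lambda\otimes\bbZ_p}(P\otimes\bbZ_p)\supseteq\prod_i\bbZ_p$, since each local indecomposable summand has endomorphism ring $\bbZ_p$, so the reduced-norm image is again $\bbZ_p^\times$ and the obstruction group is still trivial; with that adjustment the argument goes through.
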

Under the above assumption, we can give an alternative description of the category $\CSA(A_1,\ldots,A_n)^{\oplus}$ which is compatible with the symmetric monoidal structure. Consider the following $H$-graded subring of $\bbZ H$
\begin{equation}\label{eq:graded-ring}
\Sigma(A_1,\ldots,A_n):=\bigoplus_{i=1}^n \mu_i\bbZ[A_i]\text{$\,\,\subset \bbZ H$}\,,
\end{equation}
where $\mu_i:=\mathrm{ind}(A_i)$. Note that \eqref{eq:graded-ring} is indeed a subring of $\bbZ H$ since $\ind(A_i\otimes A_j)$ divides $\ind(A_i)\cdot \ind(A_j)$.
Given any $H$-graded commutative ring $\Sigma$ and $h \in H$, we will write $\Sigma(h)$ for the graded projective $\Sigma$-bimodule $\Sigma(h)_g:=\Sigma_{hg}$.
\begin{proposition}\label{prop:graded-ring}
  Assume that $[A_1],\ldots,[A_n]$ is a subgroup of the Brauer
  group. Then, we have an additive \emph{symmetric monoidal}
  equivalence\footnote{This equivalence holds also when $n$ is
    infinite.} of categories
\begin{eqnarray*}\label{eq:CSA1}
\mathrm{CSA}(A_1, \ldots, A_n)^\oplus \simeq \mathrm{Proj}_{\mathrm{gr}}(\Sigma(A_1,\ldots,A_n)) && U(A_i)\mapsto \Sigma(A_1,\ldots,A_n)([A_i]) \,.
\end{eqnarray*}
%where $\mathrm{Proj}_{\mathrm{gr}}$ stands for the category of finitely generated projective graded ~modules.
\end{proposition}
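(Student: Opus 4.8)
The plan is to deduce Proposition~\ref{prop:graded-ring} from Propositions~\ref{prop:ring} and~\ref{prop:groupcase} via an elementary instance of graded Morita theory, and then to upgrade the resulting additive equivalence to a symmetric monoidal one. Write $\Sigma:=\Sigma(A_1,\ldots,A_n)$ and $\Lambda:=\Lambda(A_1,\ldots,A_n)$. By hypothesis the set $\{[A_1],\ldots,[A_n]\}$ is the subgroup $H$, so $\Sigma\subseteq\bbZ H$ is commutative, is nonzero in every degree $h\in H$, and satisfies $\Sigma_e=\bbZ$. The key object is $G:=\bigoplus_{i=1}^n\Sigma([A_i])$ in $\mathrm{Proj}_{\mathrm{gr}}(\Sigma)$. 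First I would identify the endomorphism ring of $G$ in the category $\mathrm{Mod}_{\mathrm{gr}}(\Sigma)$ of graded $\Sigma$-modules with $\Lambda$: since the generator of the rank-one graded free module $\Sigma(h)$ sits in degree $h^{-1}$, the group of graded module maps $\Hom(\Sigma(h),\Sigma(h'))$ equals $\Sigma_{h'h^{-1}}$, with composition induced by the multiplication of $\bbZ H$; taking $h=[A_i]$, $h'=[A_j]$ and using $[A_i]^{-1}=[A_i^{\op}]$ together with the fact that the index depends only on the Brauer class, this becomes $\Sigma_{[A_i^{\op}\otimes A_j]}=\mu_{ij}\bbZ$. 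Hence $\End(G)=(\mu_{ij}\bbZ)_{i,j}$ with the evident matrix multiplication, which is exactly the ring~\eqref{eq:matrix}, and by Proposition~\ref{prop:ring}(i) coincides with $\End(\bigoplus_iU(A_i))$.

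Next I would show $\mathrm{Proj}_{\mathrm{gr}}(\Sigma)=\add(G)$. Since $e\in H$, the unit $\Sigma(e)=\Sigma$ is a direct summand of $G$; and since every $h\in H$ equals some $[A_i]$, each rank-one graded free module $\Sigma(h)$ is a direct summand of $G$, so every finitely generated graded free module, and therefore every finitely generated graded projective module, is a direct summand of some power $G^{\oplus m}$. As idempotents split in $\mathrm{Mod}_{\mathrm{gr}}(\Sigma)$, the functor $M\mapsto\Hom(G,M)$ restricts to the standard equivalence $\add(G)\simeq\mathrm{Proj}(\End(G))=\mathrm{Proj}(\Lambda)$, under which $\Sigma([A_i])$ goes to the right $\Lambda$-module $\bigoplus_j\mu_{ji}\bbZ$. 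The latter is precisely $\varphi(U(A_i))$, so composing with the equivalence $\varphi$ of Proposition~\ref{prop:ring}(ii), and using Proposition~\ref{prop:groupcase} to identify $\mathrm{CSA}(A_1,\ldots,A_n)^{\oplus,\natural}$ with $\mathrm{CSA}(A_1,\ldots,A_n)^{\oplus}$, yields the additive equivalence $\mathrm{CSA}(A_1,\ldots,A_n)^{\oplus}\simeq\mathrm{Proj}_{\mathrm{gr}}(\Sigma)$ sending $U(A_i)$ to $\Sigma([A_i])$.

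It remains to make this symmetric monoidal, which is the delicate point. Equip $\mathrm{Proj}_{\mathrm{gr}}(\Sigma)$ with $(-\otimes_\Sigma-,\Sigma(e))$; the multiplication of $\bbZ H$ provides canonical isomorphisms $\Sigma(h)\otimes_\Sigma\Sigma(h')\simeq\Sigma(hh')$ exhibiting $h\mapsto\Sigma(h)$ as a strong symmetric monoidal functor out of the discrete monoidal category $H$. On the other side, the symmetric monoidal structure on $\mathrm{CSA}(A_1,\ldots,A_n)^{\oplus}$ inherited from $\otimes_k$ satisfies $U(A_i)\otimes U(A_j)=U(A_i\otimes_k A_j)$, and since $[A_i\otimes_k A_j]=[A_i][A_j]\in H$, the equivalence~\eqref{eq:Brauer} gives $U(A_i\otimes_k A_j)\simeq U(A_l)$ for any $l$ with $[A_l]=[A_i][A_j]$; thus this structure is likewise governed by the group $H$. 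The plan is to verify that the additive equivalence above transports one structure to the other, the comparison isomorphisms on both sides being induced by the group law of $H$. I expect the main obstacle to be the coherence bookkeeping (associativity, symmetry and unitality of these isomorphisms), which should reduce to the associativity and commutativity of multiplication in $\bbZ H$ together with the rigidity coming from the fact that all objects involved are indecomposable with endomorphism ring $\bbZ$, so that the relevant natural transformations are pinned down up to sign. Once this is checked, the equivalence $U(A_i)\mapsto\Sigma(A_1,\ldots,A_n)([A_i])$ is symmetric monoidal, as claimed.
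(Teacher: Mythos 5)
The additive half of your argument is correct, and it follows a genuinely different route from the paper's. You pass through the ungraded order: you compute $\Hom_{\mathrm{gr}}(\Sigma(h),\Sigma(h'))=\Sigma_{h'h^{-1}}$, identify $\End\bigl(\bigoplus_i\Sigma([A_i])\bigr)$ with $\Lambda(A_1,\ldots,A_n)$, observe that $\mathrm{Proj}_{\mathrm{gr}}(\Sigma)=\add\bigl(\bigoplus_i\Sigma([A_i])\bigr)\simeq\mathrm{Proj}(\Lambda)$, and then splice in the equivalence $\varphi$ of Proposition \ref{prop:ring}(ii) together with the idempotent completeness of $\CSA(A_1,\ldots,A_n)^\oplus$ from Proposition \ref{prop:groupcase}. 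The paper never leaves the graded side: it takes the canonical assignment $U(A_i)\mapsto\Sigma(A_1,\ldots,A_n)([A_i])$, notes that these objects generate both categories, and reduces the whole statement to the invertibility of the induced maps on Hom-groups between generators, computed via strong dualizability as $\Hom(\Sigma([A_i]),\Sigma([A_j]))\simeq\Sigma([A_i^\op\otimes A_j])_{[k]}=\ind(A_i^\op\otimes A_j)\cdot\bbZ$; your degree-shift computation is the elementary form of the same calculation. Your detour buys an explicit treatment of essential surjectivity and of direct summands, but at the cost of routing through $\mathrm{Proj}(\Lambda)$, which carries no tensor structure.

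That cost is where the proposal stops short of a proof: the symmetric monoidal claim, which is the entire added content of this proposition over Proposition \ref{prop:ring}, is left as a plan. For an equivalence obtained by composing two Hom-functor equivalences, producing the structure maps and verifying them is genuine content rather than bookkeeping: besides the coherence and symmetry constraints you mention, one must check naturality, i.e.\ that tensoring morphisms corresponds on both sides to multiplying the corresponding integers under the identifications $\Hom(U(A_i),U(A_j))\simeq\ind(A_i^\op\otimes A_j)\cdot\bbZ$ and $\Hom(\Sigma(h),\Sigma(h'))=\Sigma_{h'h^{-1}}$; this is precisely where the normalization $\mu_i=\ind(A_i)$ in \eqref{eq:graded-ring} is needed, since $\ind(A_i\otimes A_j)$ is generally smaller than $\ind(A_i)\ind(A_j)$. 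Moreover, your rigidity appeal pins the comparison isomorphisms down only up to sign, which is not enough for the symmetry constraint; the ambiguity can in fact be eliminated rather than tolerated, because in $K_0(A_i^\op\otimes A_j)\simeq\bbZ$ the class of any honest bimodule is a positive multiple of the simple generator, so the isomorphisms furnished by \eqref{eq:Brauer} all correspond to $+\ind$ and the structure maps are canonical. The cleanest repair is the paper's arrangement: define the functor on generators by the canonical assignment, take the canonical isomorphisms $\Sigma(h)\otimes_\Sigma\Sigma(h')\simeq\Sigma(hh')$ as the monoidal data, and use your graded Morita observations only to establish full faithfulness and essential surjectivity.
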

Let $A$ be a central simple $k$-algebra. As explained above, the (idempotent complete) category $\CSA(k, A, A^{\otimes 2}, \ldots,  A^{\otimes (\mathrm{per}(A)-1)})^\oplus$ is additive symmetric monoidal. Hence, we can consider the associated Grothendieck ring 
\begin{equation}\label{eq:Grothendieck}
K_0(\mathrm{CSA}(k, A, A^{\otimes 2}, \ldots,  A^{\otimes (\mathrm{per}(A)-1)})^\oplus)\,.
\end{equation}
\begin{theorem}\label{thm:computation}
Let $\prod_i p_i^{r_i}$ be the $p$-primary decomposition of $\mathrm{per}(A)$. Under these notations, the assignment $t \mapsto U(A)$ gives rise to a ring isomorphism
\begin{equation*}
\bbZ[t]/\langle(1-t) \prod_i (1+t+\cdots + t^{p_i^{r_i}-1})\rangle \simeq \eqref{eq:Grothendieck}\,.
\end{equation*}
%Consequently, the right-hand side group is free of rank $\sum_i(p_i^{\alpha_i}-1)+1$.
\end{theorem}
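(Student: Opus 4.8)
The plan is to compute the Grothendieck ring $K_0(\cC)$ of the category $\cC:=\CSA(k,A,A^{\otimes 2},\ldots,A^{\otimes(\mathrm{per}(A)-1)})^{\oplus}$ in two stages: first present it as a quotient of a group ring, using the symmetric monoidal structure together with \eqref{eq:Brauer} to control the multiplication and Theorem~\ref{thm:comprehensive}(iv) to control the additive relations; then identify the relevant ideal by a cyclotomic computation. Write $e:=\mathrm{per}(A)$. The Brauer classes $[k],[A],\ldots,[A^{\otimes(e-1)}]$ are precisely the elements of the cyclic group $\langle[A]\rangle\subseteq\Br(k)$ of order $e$, so we are in the ``subgroup'' situation of Proposition~\ref{prop:groupcase}: $\cC$ is additive symmetric monoidal, with $\otimes$ inherited from $\Sep(k)$. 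Since $U$ is symmetric monoidal and, by \eqref{eq:Brauer}, $U(A^{\otimes i})$ depends only on $[A^{\otimes i}]=i[A]$, we get $U(A^{\otimes i})\otimes U(A^{\otimes j})\simeq U(A^{\otimes(i+j\bmod e)})$ and $U(A^{\otimes e})\simeq U(k)$. Writing $t:=[U(A)]$ in $K_0(\cC)$, and using that every object of $\cC$ is a finite direct sum of the $U(A^{\otimes i})$ with $0\le i\le e-1$, this yields a surjective ring homomorphism $\bbZ[t]/\langle t^e-1\rangle\twoheadrightarrow K_0(\cC)$ sending $t$ to $U(A)$; it remains to compute its kernel.

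Next I would identify the kernel. The abelian group $K_0(\cC)$ is the group completion of the monoid $\mathrm{Iso}(\cC)$, which is $\bbZ_{\geq0}^{\bbZ/e}$ (recording the multiplicities of $U(A^{\otimes 0}),\ldots,U(A^{\otimes(e-1)})$) modulo the isomorphism relation of Theorem~\ref{thm:comprehensive}(iv). Fix the $p$-primary decomposition $e=\prod_i p_i^{r_i}$ and identify $\langle[A]\rangle\cong\bbZ/e\cong\prod_i\bbZ/p_i^{r_i}$; then the $p_i$-primary component of the central simple algebra $A^{\otimes j}$ has Brauer class determined by the residue $j\bmod p_i^{r_i}$, and the primes not dividing $e$ impose no condition. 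Hence Theorem~\ref{thm:comprehensive}(iv) says that $\bigoplus_j U(A^{\otimes j})^{\oplus n_j}\simeq\bigoplus_j U(A^{\otimes j})^{\oplus m_j}$ exactly when, for every $i$ and every residue $a$ modulo $p_i^{r_i}$, one has $\sum_{j\equiv a}n_j=\sum_{j\equiv a}m_j$; equivalently $(n_j-m_j)_j\in\bigcap_i N_i$, where $N_i$ is the kernel of the reduction homomorphism $\bbZ[t]/\langle t^e-1\rangle\to\bbZ[t]/\langle t^{p_i^{r_i}}-1\rangle$ (legitimate as $p_i^{r_i}\mid e$). Since conversely every element of $\bigcap_i N_i$ is a difference of two effective classes, the group completion is $(\bbZ[t]/\langle t^e-1\rangle)/\bigcap_i N_i$, compatibly with the surjection above; therefore the kernel is the ideal $\bigcap_i N_i$. (Consistently with $\rho$ being a ring map, $\bigcap_i N_i$ is also the kernel of the \emph{ring} homomorphism obtained by assembling the reduction maps.)

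Finally I would carry out the cyclotomic computation. Pulling back to $\bbZ[t]$ and using $p_i^{r_i}\mid e$, the preimage of $N_i$ is the principal ideal $\langle t^{p_i^{r_i}}-1\rangle$, so, $\bbZ[t]$ being a UFD, the preimage of $\bigcap_i N_i$ is generated by $\mathrm{lcm}_i(t^{p_i^{r_i}}-1)$. Factoring into cyclotomics, $t^{p_i^{r_i}}-1=\prod_{s=0}^{r_i}\Phi_{p_i^s}(t)$ with $\Phi_1=t-1$, and since distinct cyclotomic polynomials are non-associate irreducibles of $\bbZ[t]$,
\[
\mathrm{lcm}_i\bigl(t^{p_i^{r_i}}-1\bigr)=(t-1)\prod_i\prod_{s=1}^{r_i}\Phi_{p_i^s}(t)=(t-1)\prod_i\bigl(1+t+\cdots+t^{p_i^{r_i}-1}\bigr).
\]
This is $\pm f(t)$ for $f(t):=(1-t)\prod_i(1+t+\cdots+t^{p_i^{r_i}-1})$, the claimed generator. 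Since $f(t)$ divides $t^e-1$ in $\bbZ[t]$ (each cyclotomic factor $\Phi_{p_i^s}$, $0\le s\le r_i$, divides $\prod_{d\mid e}\Phi_d=t^e-1$), the preimage of $\bigcap_i N_i$ equals $\langle f(t)\rangle\supseteq\langle t^e-1\rangle$, whence $K_0(\cC)\cong\bbZ[t]/\langle f(t)\rangle$ with $t\mapsto U(A)$, as claimed.

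I expect the main obstacle to be the bookkeeping in the middle step: translating Theorem~\ref{thm:comprehensive}(iv) correctly — in particular locating the $p_i$-primary parts of $A^{\otimes j}$ inside $\langle[A]\rangle\cong\prod_i\bbZ/p_i^{r_i}$ — and verifying that the resulting relations generate the \emph{entire} ideal $\bigcap_i N_i$ rather than a proper sub-ideal (for which one uses that every class in $\bigcap_i N_i$ is a difference of effective ones). The first and third steps are essentially formal. As an alternative to the first two steps one could instead invoke Proposition~\ref{prop:graded-ring} to replace $\cC$ by $\mathrm{Proj}_{\mathrm{gr}}(\Sigma(k,A,\ldots,A^{\otimes(e-1)}))$ — which makes the group-ring structure on $K_0$ manifest — and then analyze directly when a direct sum of graded shifts of $\Sigma(k,A,\ldots,A^{\otimes(e-1)})$ is isomorphic to another; this amounts to the same computation.
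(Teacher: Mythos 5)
Your proposal is correct and follows essentially the same route as the paper: both arguments use Theorem \ref{thm:comprehensive}(iv) together with the equivalence \eqref{eq:Brauer} to identify the kernel of the surjection $\bbZ[t]/\langle 1-t^{\mathrm{per}(A)}\rangle \twoheadrightarrow \eqref{eq:Grothendieck}$ with $\bigcap_i\langle 1-t^{p_i^{r_i}}\rangle$, and then conclude by the same polynomial identity $\bigcap_i\langle 1-t^{p_i^{r_i}}\rangle=\langle(1-t)\prod_i(1+t+\cdots+t^{p_i^{r_i}-1})\rangle$. The differences are only organizational: you bypass the paper's separate prime-power case and its explicit appeal to Corollary \ref{cor:cancellation} by a direct group-completion/multiplicity-counting argument, and you spell out the lcm-of-cyclotomics computation that the paper asserts in one line.
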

Thanks to Theorem \ref{thm:computation}, the rank of \eqref{eq:Grothendieck} is equal to $\sum_i (p_i^{r_i}-1) +1$. Note that this number is strictly inferior to the period $\mathrm{per}(A) = \prod_i p_i^{r_i}$ whenever there are at least two distinct prime numbers.
%-------------------------------------------------------------------------------
\section{Applications}\label{sec:applications}
%-------------------------------------------------------------------------------
%-------------------------------------------------------------------------------
\subsection*{Motivic relations}
%-------------------------------------------------------------------------------
Let $A$ be a central simple $k$-algebra, $\prod_i p_i^{r_i}$ the $p$-primary decomposition of $\mathrm{per}(A)$, and $p_A(t)$ the polynomial $ \prod_i (1 +t + \cdots + t^{p_i^{r_i}-1})$.
\begin{notation}
Given a polynomial $p(t)=a_0 + a_1 t + a_2 t^2 + \cdots + a_n t^n \in \bbN[t]$ and an additive invariant $E: \dgcat(k) \to \mathrm{D}$, let $E(A;p(t))$ be the following direct sum
\begin{equation}\label{eq:sum-E}
E(k)^{\oplus a_0} \oplus E(A)^{\oplus a_1} \oplus E(A^{\otimes 2})^{\otimes a_2} \oplus \cdots \oplus E(A^{\otimes n})^{\oplus a_n} \in \mathrm{D} \,.
\end{equation}
\end{notation}
Our first family of motivic relations is the following:
\begin{proposition}\label{prop:relations1}
Under the above notations, we have $E(A;p_A(t))\simeq E(A;tp_A(t))$. 
\end{proposition}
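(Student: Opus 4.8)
The plan is to reduce the statement to noncommutative motives and then read it off from Theorem \ref{thm:comprehensive}(iv). By the universal property \eqref{eq:categories}, the additive invariant $E$ factors as $E = \overline{E}\circ U$ for an additive functor $\overline{E}\colon \Hmo_0(k)\to \mathrm{D}$; since $\overline{E}$ preserves finite direct sums, for every $p(t)=\sum_j a_j t^j\in\bbN[t]$ we have $E(A;p(t))\simeq \overline{E}\big(\bigoplus_j U(A^{\otimes j})^{\oplus a_j}\big)$. Writing $U(A;p(t)):=\bigoplus_j U(A^{\otimes j})^{\oplus a_j}\in \CSA(k)^{\oplus}$, it therefore suffices to establish the isomorphism $U(A;p_A(t))\simeq U(A;t\,p_A(t))$ in $\NChow(k)$, and then apply $\overline{E}$.

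Next I would record the relevant algebra. Writing $\mathrm{per}(A)=\prod_i n_i$ with $n_i:=p_i^{r_i}$ and $q_i(t):=1+t+\cdots+t^{n_i-1}$, so that $p_A(t)=\prod_i q_i(t)$, the key identity is $(t-1)\,q_i(t)=t^{n_i}-1$. This gives $t\,q_i(t)\equiv q_i(t)\pmod{t^{n_i}-1}$ and hence $t\,p_A(t)\equiv p_A(t)\pmod{t^{n_i}-1}$ for every $i$; concretely, for each $i$ the multiset of residues modulo $n_i$ of the exponents occurring (with multiplicity) in $p_A(t)$ equals the corresponding multiset for $t\,p_A(t)$. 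I would also note that $p_A(t)$ and $t\,p_A(t)$ have the same number of coefficients counted with multiplicity, namely $p_A(1)=\mathrm{per}(A)$.

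Then I would deduce the isomorphism by checking condition (b) of Theorem \ref{thm:comprehensive}(iv). Both $U(A;p_A(t))$ and $U(A;t\,p_A(t))$ are direct sums of $\mathrm{per}(A)$ motives of central simple $k$-algebras of the form $U(A^{\otimes j})$, so the two sides have equally many summands. Fix a prime $p$. If $p\notin\{p_1,\ldots,p_s\}$, then the period of every $A^{\otimes j}$ divides $\mathrm{per}(A)$, which is prime to $p$, so the $p$-primary component $[(A^{\otimes j})^p]$ vanishes for all $j$ and the identity permutation works. If $p=p_i$, then $[(A^{\otimes j})^{p_i}]=j\,[A^{p_i}]$ with $n_i\,[A^{p_i}]=0$ (since $\mathrm{per}(A^{p_i})=p_i^{r_i}=n_i$), so this Brauer class depends only on $j$ modulo $n_i$; by the congruence above the multisets of such classes over the summands of $U(A;t\,p_A(t))$ and of $U(A;p_A(t))$ coincide, which produces the required permutation $\sigma_{p_i}$. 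This verifies condition (iv)(b), hence $U(A;p_A(t))\simeq U(A;t\,p_A(t))$ and the proposition follows. I expect the only step requiring real care to be the bookkeeping in this last paragraph: correctly translating the polynomial congruence modulo $t^{n_i}-1$ into the equality of multisets of $p_i$-primary Brauer classes, and observing that primes not dividing $\mathrm{per}(A)$ impose no constraint; once Theorem \ref{thm:comprehensive} is available, everything else is formal.
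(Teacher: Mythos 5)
Your argument is correct, but it takes a different route from the paper. You verify condition (b) of Theorem \ref{thm:comprehensive}(iv) directly: after factoring $E=\overline{E}\circ U$ via \eqref{eq:categories}, you observe that $t\,p_A(t)-p_A(t)=(t^{n_i}-1)\prod_{j\neq i}q_j(t)$, so the multisets of exponents modulo each $n_i=p_i^{r_i}$ agree, and since $[(A^{\otimes j})^{p_i}]=j[A^{p_i}]$ depends only on $j$ modulo $n_i$ (while primes not dividing $\mathrm{per}(A)$ contribute trivial classes), the required permutations $\sigma_p$ exist; the common number of summands is $p_A(1)=\mathrm{per}(A)$. The paper instead notes that $U(A;p_A(t))$ and $U(A;t\,p_A(t))$ have the same class in the Grothendieck ring \eqref{eq:Grothendieck} by Theorem \ref{thm:computation} (the relation $(1-t)p_A(t)$ kills the difference), and then upgrades equality of $K_0$-classes to an isomorphism using the cancellation property of Corollary \ref{cor:cancellation} together with the standard fact on cancellative monoids cited from Weibel, before invoking \eqref{eq:categories}. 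Your proof is more self-contained in that it bypasses Theorem \ref{thm:computation} and Corollary \ref{cor:cancellation} entirely, needing only the dictionary of Theorem \ref{thm:comprehensive}(iv)(b); the paper's proof is shorter given that the Grothendieck-ring computation is already in hand, and since Theorem \ref{thm:computation} is itself proved from Theorem \ref{thm:comprehensive}(iv), your argument essentially unwinds the same underlying mechanism in an explicit combinatorial form.
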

\begin{proof}
Consider the noncommutative motives $U(A;p_A(t))$ and $U(A;tp_A(t))$. Theorem \ref{thm:computation} implies that their classes in the Grothendieck ring \eqref{eq:Grothendieck} are the same. Hence, making use of the cancellation property of Corollary \ref{cor:cancellation}, we conclude that their are isomorphic; see \cite[II \S2 Cor.~1.2]{Weibel}. The proof follows now from the equivalence of categories \eqref{eq:categories}.
\end{proof}
\begin{example}\label{ex:simple}
Let $A$ be the reader's favorite central simple $k$-algebra with $\mathrm{per}(A)=6$. In this particular case, $p_A(t)=(1+t)(1+t +t^2)$. By combining Proposition \ref{prop:relations1} with Corollary \ref{cor:cancellation}, we hence obtain $E(k) \oplus E(A)\simeq E(A^{\otimes 3}) \oplus E(A^{\otimes 4})$.
\end{example}
Our second family of motivic relations, which greatly generalizes the preceding Example \ref{ex:simple}, is the following:
\begin{proposition}\label{prop:relations2}
Let $A, B, C$ be central simple $k$-algebras and $E:\dgcat(k) \to \mathrm{D}$ an additive invariant. Assume that $\mathrm{ind}(A)$ and $\mathrm{ind}(B)$ are coprime. Under these assumptions, we have an isomorphism
\begin{equation}\label{eq:relations2}
E(C) \oplus E(A\otimes B\otimes C) \simeq E(A\otimes C) \oplus E(B\otimes C)\,.
\end{equation}
\end{proposition}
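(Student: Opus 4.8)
The plan is to reduce the claim to an isomorphism of noncommutative motives and then read it off from Theorem~\ref{thm:comprehensive}(iv).

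\emph{Step 1: reduction to $\Hmo_0(k)$.} By the universal property~\eqref{eq:categories}, the additive invariant $E$ factors as $F\circ U$ for an additive functor $F\colon \Hmo_0(k)\to \mathrm D$, and additive functors preserve finite direct sums; hence it suffices to establish the motivic identity
\[
U(C)\oplus U(A\otimes B\otimes C)\simeq U(A\otimes C)\oplus U(B\otimes C)
\]
in $\Hmo_0(k)$. Since tensor products of central simple $k$-algebras are again central simple, all four objects lie in $\CSA(k)^\oplus$, so Theorem~\ref{thm:comprehensive}(iv) applies with $n=m=2$, $A_1=C$, $A_2=A\otimes B\otimes C$, $B_1=A\otimes C$, $B_2=B\otimes C$.

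\emph{Step 2: verifying condition (iv)(b) prime by prime.} For a central simple $k$-algebra $X$ the assignment $[X]\mapsto [X^p]$ is the projection $\mathrm{Br}(k)\twoheadrightarrow \mathrm{Br}(k)\{p\}$, so $[(X\otimes Y)^p]=[X^p]+[Y^p]$; moreover $[X^p]=0$ precisely when $p\nmid\mathrm{per}(X)$, equivalently $p\nmid\mathrm{ind}(X)$, because index and period share the same prime factors (\cite[Prop.~4.5.13]{Gille}). As $\mathrm{ind}(A)$ and $\mathrm{ind}(B)$ are coprime, no prime divides both, so for each $p\in\cP$ at least one of $[A^p],[B^p]$ vanishes. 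If $[A^p]=0$, take $\sigma_p=\mathrm{id}$: then $[B_1^p]=[C^p]=[A_1^p]$ and $[B_2^p]=[B^p]+[C^p]=[A_2^p]$. If $[B^p]=0$ (a case that also covers $[A^p]=[B^p]=0$), take $\sigma_p$ the transposition: then $[B_1^p]=[A^p]+[C^p]=[A_2^p]$ and $[B_2^p]=[C^p]=[A_1^p]$. In every case condition~(iv)(b) holds, so Theorem~\ref{thm:comprehensive}(iv) yields the displayed motivic isomorphism, and applying $F$ (equivalently, using~\eqref{eq:categories} once more) produces~\eqref{eq:relations2}.

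\emph{On the main difficulty.} Once Theorem~\ref{thm:comprehensive} is in hand this argument is pure bookkeeping with $p$-primary components, so there is no genuine obstacle; all the weight has been front-loaded into Theorem~\ref{thm:comprehensive}(iv), whose proof rests on the $\bbZ$-order models of Proposition~\ref{prop:ring}. A self-contained proof of the present proposition would instead have to exhibit the relevant splitting by hand, e.g.\ inside $\mathrm{Proj}\big(\Lambda(C,A\otimes C,B\otimes C,A\otimes B\otimes C)\big)$, using that $\mathrm{ind}\big((A\otimes C)^\op\otimes(B\otimes C)\big)=\mathrm{ind}(A^\op\otimes B)$ and similar identities forced by coprimality; constructing that matrix idempotent explicitly is the step one should expect to be fiddly.
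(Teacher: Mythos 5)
Your proof is correct and follows essentially the same route as the paper: one checks condition (iv)(b) of Theorem~\ref{thm:comprehensive} prime by prime, using that coprimality of $\mathrm{ind}(A)$ and $\mathrm{ind}(B)$ forces $[A^p]$ or $[B^p]$ to vanish at each prime (the paper phrases this as $A^{q_j}=k$ and $B^{p_i}=k$, so that $(A\otimes B\otimes C)^p$ matches either $A^p\otimes C^p$ or $B^p\otimes C^p$), and then descends from the motivic isomorphism to $E$ via the universal property~\eqref{eq:categories}. Your explicit choice of $\sigma_p$ as identity or transposition is just a slightly more spelled-out version of the same bookkeeping.
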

\begin{proof}
  Let $\prod_i p_i^{r_i}$ (resp. $\prod_j q_j^{s_j}$) be the
    $p$-primary decomposition of $\mathrm{ind}(A)$
    (resp. $\mathrm{ind}(B)$). Since $\mathrm{ind}(A)$ and
    $\mathrm{ind}(B)$ are coprime, $p_i \neq q_j$ for every $i$ and
    $j$. Consequently, we have $A^{q_j}=k$ and $B^{p_i}=k$. This
    implies that $(A\otimes B\otimes C)^{q_j}= B^{q_j}\otimes C^{q_j}$
    and $(A\otimes B\otimes C)^{p_i}=A^{p_i}\otimes C^{p_i}$. Making
    use of Theorem \ref{thm:comprehensive}(iv)(b), we hence obtain an
    isomorphism $U(C) \oplus U(A\otimes B\otimes C) \simeq U(A\otimes
    C) \oplus U(B\otimes C)$. The proof follows now from the
    equivalence of categories \eqref{eq:categories}.
\end{proof}
\begin{example}
Wedderburn theorem implies that $A\simeq M_{r \times r}(D_A), B\simeq M_{s\times s}(D_B)$, and $A \otimes B\simeq M_{t \times t}(D_{A\otimes B})$, for unique integers $r,s,t \geq 1$ and division $k$-algebras $D_A,D_B$ and $D_{A\otimes B}$. Therefore, the above isomorphism \eqref{eq:relations2}, with $C=k$ and $E$ equal to the first algebraic $K$-theory group (see \cite[\S2.8]{Gille}), reduces to
\begin{equation}\label{eq:relations-K_1}
k^\times \times D_{A\otimes B}^\times/[D_{A\otimes B}^\times, D_{A\otimes B}^\times] \simeq D_A^\times/[D_A^\times, D_A^\times] \times D_B^\times/[D_B^\times, D_B^\times]\,.
\end{equation}
\end{example}
To the best of the authors knowledge, all the above relations \eqref{eq:relations2} (in particular \eqref{eq:relations-K_1}) are new in the literature. More generally, given central simple $k$-algebras $A_1, \ldots, A_n, B_1, \ldots, B_n$ satisfying Theorem \ref{thm:comprehensive}(iv)(b), we have an isomorphism 
\begin{equation}\label{eq:more-relations}
E(A_1) \oplus \cdots \oplus E(A_n) \simeq E(B_1) \oplus \cdots \oplus E(B_n)\,.
\end{equation}
These latter relations can be nevertheless recovered from the previous ones:
\begin{proposition}\label{prop:implication}
\begin{itemize}
\item[(i)] The above relations \eqref{eq:relations2} imply all the relations \eqref{eq:more-relations}.
\item[(ii)] When $E$ is moreover symmetric monoidal, the above relations \eqref{eq:relations2} with $C=k$ imply all the relations \eqref{eq:more-relations}.
\end{itemize}
\end{proposition}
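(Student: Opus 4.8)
The plan is to reduce both statements to a single combinatorial lemma about multisets of Brauer classes. By the equivalence \eqref{eq:categories}, every additive invariant factors as $E\simeq F\circ U$ with $F$ an additive functor; hence, by \eqref{eq:Brauer}, the object $E(A)$ depends up to isomorphism only on the Brauer class $[A]$, and we write $E_x$ for $E(A)$ when $[A]=x$. Since the index and the period of a central simple algebra have the same prime factors, the hypothesis ``$\mathrm{ind}(A)$ coprime to $\mathrm{ind}(B)$'' means exactly that $[A]$ and $[B]$ have disjoint \emph{support}, where $\mathrm{supp}(x)$ is the set of primes occurring in the decomposition $x=\sum_p x^{(p)}$ coming from $\mathrm{Br}(k)=\prod_p\mathrm{Br}(k)\{p\}$. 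In this language \eqref{eq:relations2} says: if $x,y$ have disjoint support then $E_c\oplus E_{x+y+c}\simeq E_{x+c}\oplus E_{y+c}$; and \eqref{eq:more-relations} says: $\bigoplus_i E_{a_i}\simeq\bigoplus_i E_{b_i}$ whenever the multisets $\{a_i^{(p)}\}_i$ and $\{b_i^{(p)}\}_i$ agree for every prime $p$ (this is precisely Theorem~\ref{thm:comprehensive}(iv)(b), since $[A_i^p]=[A_i]^{(p)}$). Hence it suffices to show that the multiset $\{a_1,\dots,a_n\}$ can be turned into $\{b_1,\dots,b_n\}$ by a finite chain of \emph{moves}, a move replacing a pair $\{c,\,x+y+c\}$ by $\{x+c,\,y+c\}$ — or vice versa — for some $c\in\mathrm{Br}(k)$ and some $x,y$ of disjoint support. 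For part~(ii) one first upgrades the $C=k$ instance of \eqref{eq:relations2} to the general one: tensoring $E(k)\oplus E(A\otimes B)\simeq E(A)\oplus E(B)$ with the identity of $E(C)$, distributing the tensor product over the direct sum (the monoidal structure being additive), and using $E(X\otimes Y)\simeq E(X)\otimes E(Y)$ together with $E(k)\simeq\mathbf 1$, reproduces $E(C)\oplus E(A\otimes B\otimes C)\simeq E(A\otimes C)\oplus E(B\otimes C)$; so the same moves are available.

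To prove the combinatorial claim, fix a finite set of primes $\cP_0=\{p_1,\dots,p_s\}$ containing the supports of all the $a_i$ and $b_i$ (all moves used below keep every entry supported in $\cP_0$, so this set never has to grow), and regard each entry as a tuple of ``coordinates'' in $\prod_j\mathrm{Br}(k)\{p_j\}$. Choosing once and for all a total order on each group $\mathrm{Br}(k)\{p_j\}$, define the \emph{normal form} $N=\{c_1,\dots,c_n\}$ by letting $c_\ell^{(p_j)}$ be the $\ell$-th smallest element of the multiset $\{a_i^{(p_j)}\}_i$. The hypothesis relating $\{a_i\}$ and $\{b_i\}$ guarantees that $N$ is the same when computed from the $b_i$; since moves are reversible, it is therefore enough to bring an arbitrary multiset $\{a_1,\dots,a_n\}$ to $N$.

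The crucial observation is that, for each index $j$, a single move realizes the transposition of the $p_j$-coordinates of any two entries while leaving all other coordinates fixed: given entries $u,v$, take $c:=u$, $x:=v^{(p_j)}-u^{(p_j)}$ (supported in $\{p_j\}$) and $y:=(v-v^{(p_j)})-(u-u^{(p_j)})$ (supported in $\cP_0\setminus\{p_j\}$); then $\{c,\,x+y+c\}=\{u,v\}$, and $\{x+c,\,y+c\}$ consists of $u$ and $v$ with their $p_j$-coordinates interchanged. As transpositions generate the symmetric group, we may successively ``sort'' the $p_1$-coordinates of the entries, then the $p_2$-coordinates, and so on up to $p_s$; each step affects only the coordinate being sorted, so after the last step the $\ell$-th entry has $p_j$-coordinate equal to the $\ell$-th smallest element of $\{a_i^{(p_j)}\}_i$ for every $j$, i.e.\ the multiset has become $N$. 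Carrying this out for $\{b_i\}$ as well and splicing the two chains (using reversibility) produces a chain of moves from $\{a_1,\dots,a_n\}$ to $\{b_1,\dots,b_n\}$, which translates back, via $E_x$, into a string of isomorphisms proving $\bigoplus_i E(A_i)\simeq\bigoplus_i E(B_i)$.

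I expect the only delicate point to be the verification in the last paragraph that each move induces exactly a transposition on one coordinate and the identity on the others, and the attendant check that these transpositions generate the full group of permutations of each coordinate's assignment; the remaining ingredients — the reduction via \eqref{eq:categories} and \eqref{eq:Brauer}, the monoidal upgrade for part~(ii), and the definition of the normal form — are routine.
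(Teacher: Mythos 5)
Your proof is correct and follows essentially the same route as the paper: both realize the swap of the $p$-primary components of two summands by a single instance of \eqref{eq:relations2} (your choice $c=u$, $x=v^{(p)}-u^{(p)}$, $y=(v-v^{(p)})-(u-u^{(p)})$ is exactly the paper's $A=(D_1^p)^\op\otimes D_2^p$, $B=(\otimes_{q\neq p}D_1^q)^\op\otimes(\otimes_{q\neq p}D_2^q)$, $C=D_1$), and then reduce the general statement of Theorem \ref{thm:comprehensive}(iv)(b) to such transpositions. The only cosmetic differences are your normal-form/sorting bookkeeping in place of the paper's direct appeal to writing each $\sigma_p$ as a product of transpositions, and, for item (ii), your tensoring of the $C=k$ relation with $E(C)$ instead of the paper's tensoring of the desired isomorphism with the $\otimes$-invertible object $E(D_1^\op)$ --- two equivalent uses of the monoidal structure.
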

%%-------------------------------------------------------------------------------
\subsection*{Severi-Brauer varieties}
%%-------------------------------------------------------------------------------
Let $A$ be a central simple $k$-algebra and $\mathrm{SB}(A)$ the associated Severi-Brauer variety. As proved in \cite[Prop.~2.8]{MT}, we have an isomorphism
\begin{equation}\label{eq:decomp-SB}
U(\perf_\dg(\mathrm{SB}(A))) \simeq U(k) \oplus U(A) \oplus U(A)^{\otimes 2} \oplus \cdots \oplus U(A)^{\otimes (\mathrm{deg}(A)-1)}\,.
\end{equation}
As an application of the above Theorem~\ref{thm:comprehensive}(iv), we obtain the following result:
\begin{theorem}\label{thm:SB}
Given central simple $k$-algebras $A$ and $B$ with the same degree, the following three conditions are equivalent:
\begin{itemize}
\item[(i)] We have an isomorphism $U(\perf_\dg(\mathrm{SB}(A))) \simeq U(\perf_\dg(\mathrm{SB}(B)))$.
\item[(ii)] The Brauer classes $[A]$ and $[B]$ generate the same subgroup of $\mathrm{Br}(k)$.
\item[(iii)] There exists an integer $i$, coprime to $\mathrm{per}(A)$, such that $[B]=[A]^i$.
\end{itemize}
\end{theorem}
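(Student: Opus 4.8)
The plan is to combine the motivic decomposition \eqref{eq:decomp-SB} with the dictionary of Theorem~\ref{thm:comprehensive}(iv) and then to translate the resulting condition into a statement about cyclic subgroups of $\mathrm{Br}(k)$. Put $d:=\mathrm{deg}(A)=\mathrm{deg}(B)$. Since $U$ is symmetric monoidal we have $U(A)^{\otimes j}=U(A^{\otimes j})$, so \eqref{eq:decomp-SB} rewrites condition~(i) as the isomorphism of noncommutative motives
\[
U(k)\oplus U(A)\oplus\cdots\oplus U(A^{\otimes (d-1)})\;\simeq\;U(k)\oplus U(B)\oplus\cdots\oplus U(B^{\otimes (d-1)})\,.
\]
By Theorem~\ref{thm:comprehensive}(iv) this holds if and only if, for every prime $p\in\cP$, there exists a permutation $\sigma_p$ of $\{0,1,\ldots,d-1\}$ with $[(B^{\otimes i})^p]=[(A^{\otimes\sigma_p(i)})^p]$ for all $i$. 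The first step I would carry out is to identify $(A^{\otimes i})^p$ with $(A^p)^{\otimes i}$: indeed $A^{\otimes i}\simeq\bigotimes_{q\in\cP}(A^q)^{\otimes i}$ and each factor $(A^q)^{\otimes i}$ has period, hence index, a power of $q$, so this is the $p$-primary decomposition of $A^{\otimes i}$ by uniqueness. Consequently $[(A^{\otimes i})^p]=i\,[A^p]$ inside $\mathrm{Br}(k)\{p\}$, and the existence of the $\sigma_p$ becomes the equality of multisets $\{\,i\,[A^p]:0\le i\le d-1\,\}=\{\,i\,[B^p]:0\le i\le d-1\,\}$ in $\mathrm{Br}(k)\{p\}$, for every $p$.

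Next I would do the combinatorics. Since $\mathrm{per}(A)$ divides $\mathrm{ind}(A)$ which divides $\mathrm{deg}(A)=d$ (and likewise for $B$), the order $n_p:=\mathrm{per}(A^p)$ of $[A^p]$ divides $d$; hence as $i$ ranges over $\{0,\ldots,d-1\}$ the class $i\,[A^p]$ takes each element of the cyclic subgroup $\langle[A^p]\rangle$ exactly $d/n_p$ times, so the multiset above equals $\langle[A^p]\rangle$ with the constant multiplicity $d/n_p$, and similarly for $B$. Two multisets of this shape coincide if and only if their supports do, i.e.\ $\langle[A^p]\rangle=\langle[B^p]\rangle$, because equal supports force equal orders $n_p$ and then the multiplicities agree automatically. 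Running over all primes and using the decompositions $\mathrm{Br}(k)=\prod_p\mathrm{Br}(k)\{p\}$ and $\langle[A]\rangle=\bigoplus_p\langle[A^p]\rangle$, this is exactly the statement $\langle[A]\rangle=\langle[B]\rangle$, which is condition~(ii). This establishes (i)$\Leftrightarrow$(ii).

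It remains to see that (ii)$\Leftrightarrow$(iii), which is elementary: $\langle[A]\rangle$ is cyclic of order $\mathrm{per}(A)$, so $\langle[A]\rangle=\langle[B]\rangle$ holds precisely when $[B]$ is a generator of $\langle[A]\rangle$, i.e.\ when $[B]=[A]^i$ for some integer $i$ coprime to $\mathrm{per}(A)$; conversely such an $i$ is invertible modulo $\mathrm{per}(A)$, which gives $\langle[B]\rangle=\langle[A]\rangle$. The only nontrivial external input is Theorem~\ref{thm:comprehensive}(iv); within the argument the step requiring the most care is the reduction of the multiset equality to the equality of cyclic subgroups, where one genuinely uses that $n_p$ divides $d$ so that all multiplicities equal the single constant $d/n_p$.
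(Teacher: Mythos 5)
Your proof is correct. The paper itself states Theorem \ref{thm:SB} as a direct application of Theorem \ref{thm:comprehensive}(iv) and only writes out the argument for the more general Theorem \ref{thm:computation2}, where (i)$\Rightarrow$(ii) is deduced from Corollary \ref{cor:generate-Brauer} and (iii)$\Rightarrow$(i) from the equivalence \eqref{eq:Brauer} together with the cyclic sieving phenomenon (Corollary \ref{cor:sieving}) applied to the Gaussian polynomial. You instead prove (i)$\Leftrightarrow$(ii) in one stroke by an explicit multiset count inside each $\mathrm{Br}(k)\{p\}$: after reducing, via \eqref{eq:decomp-SB} and Theorem \ref{thm:comprehensive}(iv)(b), to the equality of the multisets $\{i[A^p]\}_{0\le i\le d-1}$ and $\{i[B^p]\}_{0\le i\le d-1}$, you use $\mathrm{per}(A^p)\mid\mathrm{per}(A)\mid\mathrm{ind}(A)\mid d$ to see that each multiset is the cyclic subgroup $\langle[A^p]\rangle$ (resp.\ $\langle[B^p]\rangle$) with the uniform multiplicity $d/n_p$, so equality of multisets is equality of subgroups. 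This is more elementary and self-contained than the paper's route (no need for Corollary \ref{cor:generate-Brauer} nor the sieving lemma), but it is genuinely special to the Severi--Brauer case: the uniformity of multiplicities comes from the exponents $0,\dots,d-1$ with $\mathrm{per}(A)\mid d$, and it would not carry over to the Gaussian-polynomial multiplicities appearing for general twisted flag varieties, which is exactly where the paper needs the cyclic sieving phenomenon. One cosmetic point: the identification $[(A^{\otimes i})^p]=i\,[A^p]$ is best justified at the level of Brauer classes (the class of $(A^{\otimes i})^p$ lies in $\mathrm{Br}(k)\{p\}$ and the primary decomposition of $[A^{\otimes i}]=i[A]$ in the abelian group $\mathrm{Br}(k)$ is unique), rather than by claiming $\bigotimes_q (A^q)^{\otimes i}$ is literally the $p$-primary decomposition of $A^{\otimes i}$ as algebras; since only Brauer classes enter Theorem \ref{thm:comprehensive}(iv)(b), this does not affect the argument.
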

\begin{remark}
N. Karpenko proved in \cite[Criterion~7.1]{Karpenko} that we have an isomorphism between Chow motives $M(\mathrm{SB}(A))\simeq M(\mathrm{SB}(B))$ if and only if $A$ is isomorphic to $B$ or to $B^\op$. Roughly speaking, this shows that conditions (ii) $\Leftrightarrow$ (iii) of Theorem \ref{thm:SB} are truly a noncommutative phenomenon.
\end{remark}
%Intuitively speaking, Theorem~\ref{thm:SB} shows that the universal additive invariant is a complete invariant of Severi-Brauer varieties. The unique indeterminacy is on the $\otimes$-power of the central simple algebra. 
\begin{corollary}\label{cor:Severi}
Let $A$ be a central simple $k$-algebra, $i$ an integer coprime to $\mathrm{per}(A)$, and $E:\dgcat(k) \to \mathrm{D}$ an additive invariant. Under these assumptions, we have an isomorphism $E(\mathrm{SB}(A)) \simeq E(\mathrm{SB}(A^{\otimes i}))$. 
\end{corollary}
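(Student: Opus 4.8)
The plan is to obtain Corollary~\ref{cor:Severi} as a formal consequence of Theorem~\ref{thm:SB} together with the universal property~\eqref{eq:categories} of the functor $U$. Indeed, an additive invariant $E\colon\dgcat(k)\to\mathrm{D}$ factors, by~\eqref{eq:categories}, as $E\simeq\overline{E}\circ U$ for a (unique) additive functor $\overline{E}\colon\Hmo_0(k)\to\mathrm{D}$; hence any isomorphism of noncommutative motives $U(\perf_\dg(\mathrm{SB}(A)))\simeq U(\perf_\dg(\mathrm{SB}(A^{\otimes i})))$ is carried by $\overline{E}$ to the desired isomorphism $E(\mathrm{SB}(A))\simeq E(\mathrm{SB}(A^{\otimes i}))$. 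So the entire problem reduces to producing that isomorphism of motives.

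Concretely, I would set $B:=A^{\otimes i}$, understood (as is customary when forming Severi--Brauer varieties) as a central simple $k$-algebra of degree $\mathrm{deg}(A)$ whose Brauer class is $[A]^{i}$; such a $B$ exists because $\gcd(i,\mathrm{per}(A))=1$ forces $\mathrm{ind}(A^{\otimes i})=\mathrm{ind}(A)$ (a splitting field of $A$ also splits $A^{\otimes i}$, giving $\mathrm{ind}(A^{\otimes i})\mid\mathrm{ind}(A)$, while $ij\equiv 1\pmod{\mathrm{per}(A)}$ for a suitable $j$ makes $(A^{\otimes i})^{\otimes j}$ Brauer-equivalent to $A$, yielding the reverse divisibility), so that $\mathrm{ind}(B)\mid\mathrm{deg}(A)$. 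Then $A$ and $B$ have the same degree and, by construction, $[B]=[A]^{i}$ with $i$ coprime to $\mathrm{per}(A)$; thus condition~(iii) of Theorem~\ref{thm:SB} holds. Invoking the implication (iii)$\Rightarrow$(i) of that theorem yields $U(\perf_\dg(\mathrm{SB}(A)))\simeq U(\perf_\dg(\mathrm{SB}(B)))$, and applying $\overline{E}$ as above finishes the argument.

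I expect the only genuinely delicate point to be this bookkeeping around $A^{\otimes i}$: one has to make sure the two Severi--Brauer varieties being compared carry algebras of the same degree, so that Theorem~\ref{thm:SB} literally applies. An equally viable route, which sidesteps this issue, is to argue directly from the decomposition~\eqref{eq:decomp-SB}: it identifies $U(\perf_\dg(\mathrm{SB}(A)))$ and $U(\perf_\dg(\mathrm{SB}(B)))$ with the direct sums $\bigoplus_{j=0}^{\mathrm{deg}(A)-1}U(A^{\otimes j})$ and $\bigoplus_{j=0}^{\mathrm{deg}(A)-1}U(A^{\otimes ij})$, and one then checks these agree by exhibiting, for each prime $p$, a permutation of $\{0,\ldots,\mathrm{deg}(A)-1\}$ matching the $p$-primary Brauer classes $ij[A^{p}]$ with the $j[A^{p}]$ -- possible because multiplication by $i$ is a bijection modulo $\mathrm{per}(A^{p})$, which divides $\mathrm{deg}(A)$ -- and then appealing to Theorem~\ref{thm:comprehensive}(iv)(b). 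Either way the proof is short and I anticipate no serious obstacle.
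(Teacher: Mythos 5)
Your main argument---apply Theorem \ref{thm:SB}, implication (iii)$\Rightarrow$(i), to a degree-$\mathrm{deg}(A)$ representative of the Brauer class $[A]^{i}$ and then transport the resulting isomorphism of noncommutative motives along the equivalence \eqref{eq:categories}---is exactly the proof the paper intends here (it is the same one-line argument the paper spells out for the analogous Corollary \ref{cor:Flag}). Your bookkeeping ensuring the two algebras have the same degree, as required by Theorem \ref{thm:SB}, is the right reading of the statement, and your alternative route via \eqref{eq:decomp-SB} together with Theorem \ref{thm:comprehensive}(iv)(b) is also sound, so the proposal is correct.
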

The following result relates noncommutative motives with birationality:
\begin{proposition}\label{prop:birational}
The universal additive invariant $U$ (and therefore every additive invariant) is a birational invariant of Severi-Brauer varieties.
\end{proposition}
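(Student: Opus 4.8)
The plan is to deduce the statement from the classical theorem of Amitsur on birational Severi--Brauer varieties together with the already-established Theorem~\ref{thm:SB}.

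First I would reduce to the universal additive invariant $U$. By the equivalence of categories \eqref{eq:categories}, every additive invariant $E\colon\dgcat(k)\to\mathrm D$ factors as $E\simeq\overline E\circ U$ for some additive functor $\overline E\colon\Hmo_0(k)\to\mathrm D$; hence if $U$ takes isomorphic values on birational Severi--Brauer varieties, the same holds for every $E$. It therefore suffices to show that $U(\perf_\dg(\mathrm{SB}(A)))\simeq U(\perf_\dg(\mathrm{SB}(B)))$ whenever $\mathrm{SB}(A)$ and $\mathrm{SB}(B)$ are birationally equivalent.

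So assume $\mathrm{SB}(A)$ and $\mathrm{SB}(B)$ are birationally equivalent. A birational equivalence of integral $k$-varieties induces a $k$-isomorphism of function fields and in particular preserves dimension; since $\dim\mathrm{SB}(A)=\deg(A)-1$, this forces $\deg(A)=\deg(B)$. Moreover, by Amitsur's theorem, the kernel of the restriction map $\Br(k)\to\Br(k(\mathrm{SB}(A)))$ is exactly the cyclic subgroup $\langle[A]\rangle$, and likewise $\langle[B]\rangle$ for $B$; as the two function fields are $k$-isomorphic, these kernels coincide, so $[A]$ and $[B]$ generate the same subgroup of $\Br(k)$. Now $\deg(A)=\deg(B)$ and $\langle[A]\rangle=\langle[B]\rangle$ are precisely the hypotheses of the implication (ii)$\Rightarrow$(i) of Theorem~\ref{thm:SB}, which yields the desired isomorphism $U(\perf_\dg(\mathrm{SB}(A)))\simeq U(\perf_\dg(\mathrm{SB}(B)))$.

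I do not expect a serious obstacle here: essentially all of the content is imported from Amitsur's theorem and from Theorem~\ref{thm:SB}. The only point requiring a word of care is the elementary reduction ``birational $\Rightarrow$ equal dimension $\Rightarrow$ equal degree'', which is what permits Theorem~\ref{thm:SB} (whose standing hypothesis is that the two central simple algebras share the same degree) to be applied; and one should state Proposition~\ref{prop:birational} with the understanding that ``birational invariant of Severi--Brauer varieties'' means ``assigns isomorphic values to birationally equivalent Severi--Brauer varieties'', as made precise above.
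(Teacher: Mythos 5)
Your proposal is correct and follows essentially the same route as the paper: invoke Amitsur's theorem to get that birational Severi--Brauer varieties have underlying algebras of the same degree generating the same subgroup of $\mathrm{Br}(k)$, and then apply Theorem~\ref{thm:SB}. The only difference is that you unpack the citation of Amitsur (equal dimension gives equal degree; equality of the kernels of the restriction maps to the common function field gives $\langle[A]\rangle=\langle[B]\rangle$), which the paper simply quotes from \cite[\S9]{Amitsur}.
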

\begin{proof}
As proved by Amitsur in \cite[\S9]{Amitsur}, if two Severi-Brauer varieties $\mathrm{SB}(A)$ and $\mathrm{SB}(B)$ are birational, then $A$ and $B$ have the same degree and $[A]$ and $[B]$ generate the same subgroup of $\mathrm{Br}(k)$. The proof follows then from Theorem \ref{thm:SB}.
\end{proof}
\begin{remark}[Amitsur's conjecture]
S. Amitsur conjectured in \cite{Amitsur} that if two central simple $k$-algebras $A$ and $B$ have the same degree and $[A]$ and $[B]$ generate the same subgroup of $\mathrm{Br}(k)$, then $\mathrm{SB}(A)$ and $\mathrm{SB}(B)$ are birational. Thanks to Proposition \ref{prop:birational}, Amitsur's conjecture can be reformulated as follows:

\smallskip

{\it Conjecture: $U$ is a complete birational invariant of Severi-Brauer varieties.}
\end{remark}
%%-------------------------------------------------------------------------------
\subsection*{Twisted flag varieties}
%%-------------------------------------------------------------------------------
Let $A$ be a central simple $k$-algebra, $d_1, \ldots, d_m$, $m \geq 1$, positive
integers such that 
$\sum_i d_i=\mathrm{deg}(A)$, and $\mathrm{Flag}(d_1, \ldots, d_m; A)$ the associated twisted flag
variety. Recall that when $m=1$, $\mathrm{Flag}(d_1; A)$ identifies with
the twisted Grassmanian variety $\mathrm{Gr}(d_1;A)$ and when $d_1=1$,
$\mathrm{Gr}(d_1;A)$ reduces to $\mathrm{SB}(A)$.
\begin{notation}\label{not:notation2}
Given an additive symmetric monoidal category $(\cC,\otimes, {\bf 1})$, an object $b \in \cC$, and a polynomial $p(t)=a_0 + a_1 t + a_2 t^2+ \cdots + a_nt^n \in \bbN[t]$, let $p(b)$ be the direct sum
$ {\bf 1}^{\oplus a_0} \oplus b^{\oplus a_1} \oplus (b^{\otimes 2})^{\oplus a_2} \oplus \cdots \oplus (b^{\otimes n})^{\oplus a_n} \in \cC$. Note that the above direct sum \eqref{eq:sum-E} reduces to $p(E(A))$ when $E$ is moreover symmetric monoidal.
\end{notation}
\begin{theorem}\label{thm:Flag}
We have the following motivic decomposition 
\begin{equation}\label{eq:motivic-decomp-flag}
U(\perf_\dg(\mathrm{Flag}(d_1, \ldots, d_m;A)))\simeq {\mathrm{deg}(A) \choose d_1 \cdots d_m}_{U(A)}\,,
\end{equation}
where ${\mathrm{deg}(A) \choose d_1 \cdots d_m}_t$ stands for the Gaussian polynomial; see Example \ref{ex:cyclic}.
\end{theorem}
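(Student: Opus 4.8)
The plan is to deduce the motivic decomposition \eqref{eq:motivic-decomp-flag} from a semiorthogonal decomposition of $\perf_\dg(\mathrm{Flag}(d_1,\ldots,d_m;A))$ by applying the universal additive invariant $U$, and then to recognise the resulting direct sum as the Gaussian polynomial of Example \ref{ex:cyclic} evaluated at $U(A)$.

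The first step is to recall the twisted analogue of Kapranov's full exceptional collection. Over a separable closure $k_{\mathrm{sep}}$ the variety $\mathrm{Flag}(d_1,\ldots,d_m;A)$ becomes a split partial flag variety $\overline{F}$, whose bounded derived category carries a full exceptional collection indexed by the Schubert cells $\sigma$ of $\overline{F}$, the object attached to $\sigma$ being a Schur functor of the tautological bundles sitting in codimension $\mathrm{codim}(\sigma)$. This collection is equivariant with respect to the absolute Galois group $G$ modulo the $\mathrm{PGL}_{\mathrm{deg}(A)}$-cocycle defining $A$, and hence descends to a semiorthogonal decomposition $\perf_\dg(\mathrm{Flag}(d_1,\ldots,d_m;A)) = \langle \cT_\sigma\rangle_\sigma$ in which the block $\cT_\sigma$ is equivalent to $\perf_\dg(B_\sigma)$ for a central simple $k$-algebra $B_\sigma$ with Brauer class $[B_\sigma] = \mathrm{codim}(\sigma)\cdot[A]$, the power of $[A]$ being dictated by the central character of the relevant representation of the Levi factor. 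For twisted Grassmannians this decomposition is available in the literature; for an arbitrary twisted flag variety I would obtain it by induction on $m$, factoring the projection $\mathrm{Flag}(d_1,\ldots,d_m;A)\to\mathrm{Gr}(d_1;A)$ through a relative Grassmannian decomposition, in parallel with the multiplicativity ${\mathrm{deg}(A)\choose d_1\cdots d_m}_t = {\mathrm{deg}(A)\choose d_1}_t\cdot{\mathrm{deg}(A)-d_1\choose d_2\cdots d_m}_t$ of the Gaussian polynomial. The Severi--Brauer variety $\mathrm{SB}(A)$ is the special case in which this recovers \eqref{eq:decomp-SB}, a useful consistency check.

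The second step is to push the decomposition forward along $U$. A semiorthogonal decomposition of a pretriangulated dg category gives rise to split short exact sequences of dg categories, so the additive invariant $U$ converts it into a direct sum $U(\perf_\dg(\mathrm{Flag}(d_1,\ldots,d_m;A)))\simeq\bigoplus_\sigma U(\cT_\sigma)$. By Morita invariance $U(\cT_\sigma)\simeq U(B_\sigma)$; since $[B_\sigma]=\mathrm{codim}(\sigma)\cdot[A]$, the Brauer dictionary \eqref{eq:Brauer} identifies $U(B_\sigma)$ with $U(A^{\otimes\mathrm{codim}(\sigma)})$, which equals $U(A)^{\otimes\mathrm{codim}(\sigma)}$ because $U$ is symmetric monoidal. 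Collecting the summands according to the common value of $\mathrm{codim}(\sigma)$ and using the classical identity $\sum_\sigma t^{\mathrm{codim}(\sigma)} = {\mathrm{deg}(A)\choose d_1\cdots d_m}_t$ for the Poincaré polynomial of $\overline{F}$, we obtain $\bigoplus_\sigma U(A)^{\otimes\mathrm{codim}(\sigma)} = {\mathrm{deg}(A)\choose d_1\cdots d_m}_{U(A)}$ in the sense of Notation \ref{not:notation2}, which is precisely \eqref{eq:motivic-decomp-flag}.

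The main obstacle is the first step: the passage to noncommutative motives and the Schubert-cell bookkeeping are formal once the input is available, whereas producing the twisted semiorthogonal decomposition for general (not merely Grassmannian) twisted flag varieties — and in particular verifying that the algebra $B_\sigma$ attached to a codimension-$j$ Schubert cell is Brauer-equivalent to $A^{\otimes j}$ for \emph{every} such cell, i.e. that the descent obstruction of Kapranov's exceptional objects depends on the cell only through its codimension — is the part requiring care. One either invokes a direct reference or carries out the inductive reduction to twisted Grassmannians indicated above.
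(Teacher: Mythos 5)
Your second step is formally fine: applying $U$ to a semiorthogonal decomposition does yield a direct sum, Morita invariance and the dictionary \eqref{eq:Brauer} convert blocks with Brauer class $j[A]$ into $U(A)^{\otimes j}$, and the count $\sum_\sigma t^{\mathrm{codim}(\sigma)}={\mathrm{deg}(A)\choose d_1\cdots d_m}_t$ is correct. The genuine gap is your first step, which carries all the content and which you only assert. Descent of Kapranov's collection to the twisted form is not automatic: the individual Schur-functor objects descend only as sheaves twisted by the cocycle defining $A$, and upgrading the Galois-stable blocks they generate to dg categories of the form $\perf_\dg(B_\sigma)$ with $[B_\sigma]=\mathrm{codim}(\sigma)\cdot[A]$ requires a real argument (tilting objects or twisted-sheaf descent), available in the literature only for Severi--Brauer varieties (whence \eqref{eq:decomp-SB}) and, around the time of this paper, for twisted Grassmannians, but not off the shelf for arbitrary $\mathrm{Flag}(d_1,\ldots,d_m;A)$. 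Your proposed induction along $\mathrm{Flag}(d_1,\ldots,d_m;A)\to \mathrm{Gr}(d_1;A)$ does not close this gap as stated: the fibration is a flag bundle of a \emph{twisted} tautological bundle, so one needs relative semiorthogonal decompositions for flag bundles of modules over Azumaya algebras on the twisted Grassmannian, together with an identification of the resulting blocks with derived categories of central simple algebras over $k$ and the corresponding Brauer-class bookkeeping on $\mathrm{Gr}(d_1;A)$ --- none of which is supplied.

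The paper avoids semiorthogonal decompositions entirely. It works in Merkurjev--Panin's motivic category $\underline{\cC}$ \cite{MP} and invokes Panin's theorem \cite{Panin}, which gives, for any $\Ch$-homogeneous basis $\theta_1,\ldots,\theta_q$ of $R(\tilde{P})$ over $R(\tilde{H})$, an isomorphism $\bigoplus_{i}\Psi(\beta_\gamma(|\theta_i|))\simeq\Phi({}_\gamma\cF)$ in $\underline{\cC}$ (see \eqref{eq:panin}). Specializing to $\tilde{H}=\mathrm{Sl}_n$ with $n=\mathrm{deg}(A)$ and computing the Poincar\'e series of such a basis produces exactly the Gaussian polynomial, which is Proposition \ref{prop:computation}; the decomposition is then transported to noncommutative motives by the fully faithful additive symmetric monoidal functor $\Theta\colon\underline{\cC}\to\NChow(k)$ of \cite[Thm.~6.10]{twisted}, compatible with $X\mapsto U(\perf_\dg(X))$ and $A\mapsto U(A)$. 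In this approach the representation-theoretic basis $\{\theta_i\}$ plays precisely the role of your Schubert-cell indexing, with $|\tilde{\theta}_i|$ standing in for $\mathrm{codim}(\sigma)$, but no descent of exceptional collections is needed. To repair your argument, either restrict to cases where the twisted decomposition is actually established in the literature, or replace your first step by Panin's K-theoretic input as the paper does.
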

Note that \eqref{eq:motivic-decomp-flag} generalizes the above motivic decomposition \eqref{eq:decomp-SB}. In the same vein, the following results %greatly 
generalize Theorem~\ref{thm:SB} and Corollary~\ref{cor:Severi}.
\begin{theorem}\label{thm:computation2}
Given central simple $k$-algebras $A$ and $B$ with the same degree, the following three conditions are equivalent:
\begin{itemize}
\item[(i)] We have an isomorphism of noncommutative motives
$$U(\perf_\dg(\mathrm{Flag}(d_1, \ldots, d_m;A))) \simeq U(\perf_\dg(\mathrm{Flag}(d_1, \ldots, d_m;B)))\,.$$
\item[(ii)] The Brauer classes $[A]$ and $[B]$ generate the same subgroup of $\mathrm{Br}(k)$.
\item[(iii)] There exists an integer $i$, coprime to $\mathrm{per}(A)$, such that $[B]=[A]^i$.
\end{itemize}
\end{theorem}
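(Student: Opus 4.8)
The plan is to deduce this from Theorem \ref{thm:Flag} together with the comprehensive dictionary of Theorem \ref{thm:comprehensive}(iv). The implications (iii) $\Rightarrow$ (ii) and (ii) $\Rightarrow$ (iii) are purely group-theoretic statements about cyclic subgroups of $\mathrm{Br}(k)$: if $\langle [A]\rangle = \langle [B]\rangle$ then $[B] = [A]^i$ for some $i$, and since $[A]$ and $[B]$ have the same order $\mathrm{per}(A)=\mathrm{per}(B)$ (forced by (i) via the degree hypothesis, or assumed directly), $i$ must be coprime to $\mathrm{per}(A)$; conversely $[B]=[A]^i$ with $i$ coprime to $\mathrm{per}(A)$ immediately gives $\langle[A]\rangle=\langle[B]\rangle$. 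These are already handled in the proof of Theorem \ref{thm:SB} and can be cited verbatim. So the real content is the equivalence (i) $\Leftrightarrow$ (ii).

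For (iii) $\Rightarrow$ (i): if $[B]=[A]^i$ with $i$ coprime to $\mathrm{per}(A)$, I would first observe that raising to the $i$-th power permutes the multiset $\{[k],[A],[A]^{\otimes 2},\ldots,[A]^{\otimes(\deg(A)-1)}\}$ up to the issue that exponents range over $0,\ldots,\deg(A)-1$ rather than over $\bbZ/\mathrm{per}(A)$. The clean way around this is to work prime-by-prime using Theorem \ref{thm:comprehensive}(iv)(b): for each $p$, the $p$-primary components of the tensor powers appearing in the Gaussian polynomial decomposition of $\mathrm{Flag}(d_1,\ldots,d_m;A)$ depend only on the residue of the exponent mod $p^{r_p}$ where $p^{r_p}\Vert\mathrm{per}(A)$, and the coefficients of the Gaussian polynomial ${\deg(A)\choose d_1\cdots d_m}_t$ reduced modulo $t^{p^{r_p}}-1$ are invariant under $t\mapsto t^i$ for $i$ coprime to $p$ — this last fact is precisely the cyclic sieving phenomenon for the Gaussian binomial coefficient, which the paper advertises having categorified. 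Concretely, the number of exponents $e\in\{0,\ldots,\deg(A)-1\}$ with $e\equiv c\pmod{p^{r_p}}$ weighted by the Gaussian coefficient is unchanged when $c$ is replaced by $ic$. Combining these per-prime permutations via Theorem \ref{thm:comprehensive}(iv)(b) yields the isomorphism in (i).

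For (i) $\Rightarrow$ (ii): applying Corollary \ref{cor:generate-Brauer} (or Corollary \ref{cor:generate-Brauer} read through Theorem \ref{thm:Flag}) to the isomorphism in (i), the Brauer classes of the summands on each side generate the same subgroup of $\mathrm{Br}(k)$; on the left this subgroup is $\langle[A]\rangle$ and on the right it is $\langle[B]\rangle$, giving (ii). I expect the main obstacle to be the bookkeeping in (iii) $\Rightarrow$ (i): one must correctly match the coefficient of each $t^e$ in the Gaussian polynomial with the coefficient of $t^{\langle ie\rangle}$, where the residue must be taken in the right range and then re-expanded, and check that the cyclic-sieving invariance is being invoked with the correct modulus $p^{r_p}$ (the $p$-primary part of the period, not of the index or the degree). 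Everything else is a direct appeal to Theorems \ref{thm:Flag} and \ref{thm:comprehensive} and to the proof of Theorem \ref{thm:SB}.
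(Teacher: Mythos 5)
Your proposal is correct, and its overall skeleton coincides with the paper's: (i) $\Rightarrow$ (ii) via the decomposition of Theorem \ref{thm:Flag} combined with Corollary \ref{cor:generate-Brauer}, (ii) $\Leftrightarrow$ (iii) as in Theorem \ref{thm:SB}, and (iii) $\Rightarrow$ (i) via the cyclic sieving phenomenon. Where you diverge is in the mechanism for (iii) $\Rightarrow$ (i). The paper argues entirely at the monoidal level: from \eqref{eq:Brauer} one gets $U(B)\simeq U(A)^{\otimes i}$ and $U(A)^{\otimes \mathrm{per}(A)}\simeq U(k)$, so the categorified sieving statement (Corollary \ref{cor:sieving}, applied to $b=U(A)$ with $l=\mathrm{per}(A)$, which divides $\mathrm{deg}(A)$) immediately yields ${\mathrm{deg}(A) \choose d_1 \cdots d_m}_{U(A)^{\otimes i}}\simeq {\mathrm{deg}(A) \choose d_1 \cdots d_m}_{U(A)}$, with no appeal to Theorem \ref{thm:comprehensive} for this implication. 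You instead verify condition (b) of Theorem \ref{thm:comprehensive}(iv) prime by prime, using the numerical congruence $p(t^i)\equiv p(t)$ modulo $t^{p^{r_p}}-1$ to match, residue class by residue class, the multiplicities of summands whose $p$-primary Brauer class is a given power of $[A^p]$; since $p^{r_p}\mid\mathrm{per}(A)\mid\mathrm{ind}(A)\mid\mathrm{deg}(A)$ and $(i,p^{r_p})=1$, the congruence applies, and (iv)(b) $\Rightarrow$ (iv)(a) then produces the isomorphism of direct sums, hence (i). Both routes rest on the same combinatorial lemma; the paper's is lighter (it never invokes the $\bbZ$-order machinery behind Theorem \ref{thm:comprehensive} here), while yours makes the $p$-primary bookkeeping explicit at the cost of the heavier dictionary. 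Two small inaccuracies in your write-up are harmless: the exponents in the Gaussian multinomial ${\mathrm{deg}(A) \choose d_1 \cdots d_m}_t$ range up to $\sum_{i<j}d_id_j$, not $\mathrm{deg}(A)-1$ (irrelevant since you only work modulo $p^{r_p}$), and the sieving input needed is for the Gaussian multinomial rather than the binomial --- an imprecision the paper itself shares, since Corollary \ref{cor:sieving} is stated for ${n\choose m}_b$ but cited in the multinomial case.
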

\begin{proof}
Assume condition (i). Thanks to isomorphism \eqref{eq:motivic-decomp-flag} and Corollary \ref{cor:generate-Brauer}, the Brauer classes $[k], [A], \ldots, [A^{\otimes(\mathrm{per}(A)-1)}]$ and $[k], [B], \ldots, [B^{\otimes(\mathrm{per}(B)-1)}]$ generate the same subgroup of $\mathrm{Br}(k)$. This implies condition (ii). Conditions (ii) and (iii) are clearly equivalent. Assume now condition (iii). Thanks to the equivalence \eqref{eq:Brauer}, we have $U(B) \simeq U(A)^{\otimes i}$. Hence, condition (i) follows from
\begin{eqnarray}
U(\perf_\dg(\mathrm{Flag}(d_1, \ldots, d_m;B))) & \simeq & {\mathrm{deg}(B) \choose d_1 \cdots d_m}_{U(B)} \nonumber \\
& \simeq & {\mathrm{deg}(A) \choose d_1 \cdots d_m}_{U(A)^{\otimes i}} \nonumber \\
\label{eq:cite-sieving}& \simeq &  {\mathrm{deg}(A) \choose d_1 \cdots d_m}_{U(A)} \label{eq:app-cyc-sieving} \\
&\simeq & U(\perf_\dg(\mathrm{Flag}(d_1, \ldots, d_m;A))) \,, \nonumber
\end{eqnarray}
where \eqref{eq:cite-sieving} follows from the cyclic sieving phenomenon; see Corollary \ref{cor:sieving}.
\end{proof}
\begin{corollary}\label{cor:Flag}
Let $A$ be a central simple $k$-algebra, $d_1, \ldots, d_m, m \geq 1$, positive integers such that $\sum_i d_i=\mathrm{deg}(A)$, $i$ an integer coprime to $\mathrm{per}(A)$, and $E:\dgcat(k) \to \mathrm{D}$ an additive invariant. Under these assumptions, we have
$$ E(\mathrm{Flag}(d_1,\ldots,d_m; A)) \simeq E(\mathrm{Flag}(d_1,\ldots,d_m; A^{\otimes i}))\,.$$
\end{corollary}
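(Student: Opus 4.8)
The plan is to obtain this as a formal consequence of Theorem~\ref{thm:computation2} combined with the universal property~\eqref{eq:categories} of the functor $U$, in exactly the same way that Corollary~\ref{cor:Severi} follows from Theorem~\ref{thm:SB}.

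First I would fix notation: write $B$ for the central simple $k$-algebra of degree $\mathrm{deg}(A)$ with Brauer class $[B]=[A]^i$ (this is the meaning of $A^{\otimes i}$ in the statement, normalized so that $\mathrm{Flag}(d_1,\ldots,d_m;A^{\otimes i})$ is defined). Since $i$ is coprime to $\mathrm{per}(A)$, the pair $(A,B)$ satisfies condition~(iii) of Theorem~\ref{thm:computation2}, so the implication (iii)$\Rightarrow$(i) yields an isomorphism of noncommutative motives
\[
U(\perf_\dg(\mathrm{Flag}(d_1,\ldots,d_m;A)))\;\simeq\;U(\perf_\dg(\mathrm{Flag}(d_1,\ldots,d_m;B)))\,.
\]
Next I would invoke~\eqref{eq:categories}: the additive invariant $E$ factors, up to natural isomorphism, as $E\simeq\overline{E}\circ U$ for an additive functor $\overline{E}\colon\Hmo_0(k)\to\mathrm{D}$. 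Since twisted flag varieties are smooth projective, $E$ evaluated on $\mathrm{Flag}(d_1,\ldots,d_m;A)$ is by definition $\overline{E}$ applied to $U(\perf_\dg(\mathrm{Flag}(d_1,\ldots,d_m;A)))$, and likewise for $B$; applying the functor $\overline{E}$ to the displayed isomorphism therefore produces the desired isomorphism $E(\mathrm{Flag}(d_1,\ldots,d_m;A))\simeq E(\mathrm{Flag}(d_1,\ldots,d_m;A^{\otimes i}))$.

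No genuine obstacle arises once Theorems~\ref{thm:Flag} and~\ref{thm:computation2} are available — the deduction is purely formal. The only points deserving a word of care are the interpretation of the symbol $A^{\otimes i}$ (it must be taken to be a central simple algebra of the same degree as $A$, Brauer-equivalent to the $i$-th tensor power, so that the flag variety is well posed) and the legitimacy of transporting an isomorphism of noncommutative motives through an arbitrary additive invariant, which is precisely what the equivalence~\eqref{eq:categories} guarantees.
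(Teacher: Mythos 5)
Your proposal is correct and follows the paper's own proof exactly: the paper also deduces Corollary \ref{cor:Flag} by combining Theorem \ref{thm:computation2} (condition (iii) gives an isomorphism of the noncommutative motives of the two twisted flag varieties) with the universal property \eqref{eq:categories} to push that isomorphism through an arbitrary additive invariant $E$. Your extra remark on interpreting $A^{\otimes i}$ as a central simple algebra of the same degree, Brauer-equivalent to the $i$-th tensor power, is a reasonable clarification but does not change the argument.
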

\begin{proof}
It follows from the combination of Theorem \ref{thm:computation2} with equivalence \eqref{eq:categories}.
\end{proof}  
\medbreak\noindent\textbf{Acknowledgments:}
The authors would like to thank Christine Bessenrodt and Robert Guralnick for their comments on some questions concerning integral representation theory, and also to Marcello Bernardara for bringing our attention to Amitsur's work \cite{Amitsur}.
%-------------------------------------------------------------------------------
\section{Background on dg categories}\label{sec:dg}
%-------------------------------------------------------------------------------
Let $\cC(k)$ be the symmetric monoidal category of cochain complexes of $k$-vector spaces. A {\em dg category $\cA$} is a category enriched over $\cC(k)$ and a {\em dg functor} $F:\cA\to \cB$ is a functor enriched over $\cC(k)$; consult Keller's ICM survey \cite{ICM-Keller} for further details. 

Let $\cA$ be a dg category. Its opposite dg category $\cA^\op$ has the same objects and $\cA^\op(x,y):=\cA(y,x)$.  A right $\cA$-module is a dg functor $\cA^\op \to \cC_\dg(k)$ with values in the dg category $\cC_\dg(k)$ of complexes of $k$-vector spaces. Let $\cC(\cA)$ be the category of right $\cA$-modules. Following \cite[\S3]{ICM-Keller}, the derived category $\cD(\cA)$ of $\cA$ is defined as the localization of $\cC(\cA)$ with respect to the class of objectwise quasi-isomorphisms. Its full subcategory of compact objects will be denoted by $\cD_c(\cA)$.

A dg functor $F:\cA\to \cB$ is called a {\em Morita equivalence} if it induces an equivalence $\cD(\cA) \stackrel{\simeq}{\to} \cD(\cB)$ on derived categories; see \cite[\S4.6]{ICM-Keller}. As proved in \cite[Thm.~5.3]{Additive}, $\dgcat(k)$ admits a Quillen model structure whose weak equivalences are the Morita equivalences. Let $\Hmo(k)$ be the homotopy category hence obtained. 

The  tensor product $\cA\otimes\cB$ of dg categories is defined as follows: the set of objects is the cartesian product and $(\cA\otimes\cB)((x,w),(y,z)):= \cA(x,y) \otimes \cB(w,z)$. As explained in \cite[\S2.3]{ICM-Keller}, this construction gives rise to symmetric monoidal categories $(\dgcat(k),-\otimes-,k)$ and $(\Hmo(k),-\otimes-,k)$. Finally, given dg categories $\cA$ and $\cB$, an  $\cA\text{-}\cB$-bimodule is a dg functor $\mathrm{B}:\cA \otimes \cB^\op\to \cC_\dg(k)$, \ie a right $(\cA^\op \otimes \cB)$-module. Associated to a dg functor $F: \cA \to \cB$, we have the $\cA\text{-}\cB$-bimodule 
\begin{eqnarray}\label{eq:bimodule2}
{}_F\cB:\cA\otimes \cB^\op \too \cC_\dg(k) && (x,w) \mapsto \cB(w,F(x))
\end{eqnarray}
as well as the $\cB\text{-}\cA$-bimodule
\begin{eqnarray}\label{eq:bimodule3}
\cB_F:\cB\otimes \cA^\op \too \cC_\dg(k) && (w,x) \mapsto \cB(F(x),w)\,.
\end{eqnarray}
%---------------------------------------------------------------------------------------------------
\section{Background on noncommutative motives}\label{sec:NCmotives}
%---------------------------------------------------------------------------------------------------
%---------------------------------------------------------------------------------------------------
\subsection{Additive invariants}\label{sub:additive}
%---------------------------------------------------------------------------------------------------
Given a dg category $\cA$, let $T(\cA)$ be the dg category of pairs $(i,x)$, where $ i \in \{1,2\}$ and $x\in\cA$. The complex of morphisms in $T(\cA)$ from $(i,x)$ to $(i',x')$ is given by $\cA(x,x')$ if $ i' \geq i$ and is zero otherwise. Composition is induced by $\cA$. Intuitively speaking, $T(\cA)$ ``dg categorifies'' the notion of upper triangular matrix. Note that we have two inclusion dg functors $i_1, i_2:\cA \hookrightarrow T(\cA)$. 

Recall from \cite{Additive} that a functor $E:\dgcat(k) \to \mathrm{D}$, with values in an additive category $\mathrm{D}$, is called an {\em additive invariant} if it satisfies the following conditions:
\begin{itemize}
\item[(i)] it sends Morita equivalences to isomorphisms;
\item[(ii)] given a dg category $\cA$, the inclusion dg functors $i_1, i_2$ induce an isomorphism\footnote{Condition (ii) can be equivalently formulated in terms of semi-orthogonal decompositions in the
sense of Bondal-Orlov~\cite{BO}; see \cite[Thm.~6.3(4)]{Additive}.}
$$E(\cA) \oplus E(\cA) \stackrel{\sim}{\too} E(T(\cA))\,.$$
\end{itemize}
Examples of additive invariants include algebraic $K$-theory, cyclic homology (and all its variants), topological Hochschild homology, etc; consult the survey \cite{Buenos}. When applied to the dg categories $\perf_\dg(X)$, these invariants agree with the corresponding invariants of the quasi-compact quasi-separated $k$-schemes $X$.
%---------------------------------------------------------------------------------------------------
\subsection{Universal additive invariant}\label{sub:universal}
%---------------------------------------------------------------------------------------------------
Given dg categories $\cA$ and $\cB$, let $\rep(\cA,\cB)$ be the full triangulated subcategory of $\cD(\cA^\op \otimes \cB)$ consisting of those
$\cA\text{-}\cB$-bimodules $\mathrm{B}$ such that for every $x \in \cA$ the associated right $\cB$-module $\mathrm{B}(x,-)$ belongs to
$\cD_c(\cB)$. As proved in \cite[Cor.~5.10]{Additive}, there is a bijection 
$\Hom_{\Hmo(k)}(\cA,\cB)\simeq \mathrm{Iso}\,\rep(\cA,\cB)$ under which the composition law of $\Hmo(k)$
corresponds to the (derived) tensor product of bimodules. Since the above 
$\cA\text{-}\cB$-bimodules \eqref{eq:bimodule2} clearly belong to
$\rep(\cA,\cB)$, we hence obtain a well-defined symmetric monoidal functor
\begin{eqnarray}\label{eq:functor1}
\dgcat(k) \too \Hmo(k) && F \mapsto {}_F\cB\,.
\end{eqnarray}
The {\em additivization} of $\Hmo(k)$ is the additive category $\Hmo_0(k)$ with the same objects as $\Hmo(k)$ and with morphisms given by $\Hom_{\Hmo_0(k)}(\cA,\cB):=K_0\rep(\cA,\cB)$, where $K_0\rep(\cA,\cB)$ stands for the Grothendieck group of the triangulated category $\rep(\cA,\cB)$. The composition law is induced by the (derived) tensor product of bimodules and the symmetric monoidal structure extends by bilinearity from $\Hmo(k)$ to $\Hmo_0(k)$. Note that we have also a well-defined symmetric monoidal functor
\begin{eqnarray}\label{eq:functor2}
\Hmo(k) \too \Hmo_0(k) && \mathrm{B} \mapsto [\mathrm{B}]\,.
\end{eqnarray}
As proved in \cite[Thms.~5.3 and 6.3]{Additive}, the universal additive invariant $U$ is obtained by composing the above functors \eqref{eq:functor1} and \eqref{eq:functor2}. Finally, recall from \cite[Lem.~4.2]{Additive} that $U(\cA) \oplus U(\cB) \simeq U(\cA \times \cB) \simeq U(\cA\amalg \cB)$ for any two dg categories $\cA$ and $\cB$.
%---------------------------------------------------------------------------------------------------
\subsection{Coefficients}\label{sub:coefficients}
%--------------------------------------------------------------------------------------------------- 
Let $R$ be a commutative ring. The {\em $R$-linearization} of $\Hmo_0(k)$ is the $R$-linear category $\Hmo_0(k)_R$ obtained by tensoring the morphisms of $\Hmo_0(k)$ with $R$. Note that $\Hmo_0(k)_R$ inherits a $R$-linear symmetric monoidal structure and that we have a well-defined symmetric monoidal functor
\begin{eqnarray}\label{eq:functor3}
\Hmo_0(k) \too \Hmo_0(k)_R && [\mathrm{B}] \mapsto [\mathrm{B}]\otimes_\bbZ R\,.
\end{eqnarray}
The {\em universal additive invariant with $R$-coefficients $U(-)_R$} is obtained by composing the universal additive invariant $U$ with \eqref{eq:functor3}. Given any $R$-linear additive category $\mathrm{D}$, there is an induced equivalence of categories
\begin{equation}\label{eq:categories-R}
U(-)_R^\ast: \Fun(\Hmo_0(k)_R,\mathrm{D}) \stackrel{\simeq}{\too} \Fun_{\mathrm{add}}(\dgcat(k),\mathrm{D})\,,
\end{equation}
where the left-hand side denotes the category of $R$-linear additive functors.
%---------------------------------------------------------------------------------------------------
\section{Proof of Theorem \ref{thm:motivation}}
%---------------------------------------------------------------------------------------------------
Let $\cA$ be a smooth proper dg category. Since $\NNum(k)$ is a rigid symmetric monoidal category, it suffices to show that $\Hom_{\NNum(k)}(U(k),U(\cA))$ is a finitely generated abelian group. Recall from \cite[\S4]{Kontsevich} that the Grothendieck group $K_0(\cA):= K_0(\cD_c(\cA))$ of $\cA$ comes equipped with the bilinear form:
\begin{eqnarray*}
\chi: K_0(\cA) \times K_0(\cA) \too \bbZ && (P,Q) \mapsto \sum_i (-1)^i \mathrm{dim}\, \Hom_{\cD_c(\cA)}(P,Q[i])\,.
\end{eqnarray*}
This form is in general not symmetric neither anti-symmetric. However, as proved in \cite[Thm.~4.3]{Kontsevich}, the left and right kernels agree; let $\mathrm{Ker}(\chi)$ be the resulting kernel. Note now that the proof of \cite[Thm.~1.1]{Kontsevich} (with $F$ replaced by $\bbZ$) implies that 
\begin{equation}\label{eq:description-key}
\Hom_{\NNum(k)}(U(k),U(\cA)) \simeq K_0(\cA)/_{\!\mathrm{Ker}(\chi)}\,.
\end{equation}
Hence, it is enough to show that the right-hand side of \eqref{eq:description-key} is finitely generated. As explained in \cite[\S4]{Galois}, the $\otimes$-ideal $\cN$ is compatible with extension of scalars. Consequently, we have isomorphisms of $\bbQ$-vector spaces:
$$ (K_0(\cA)/_{\!\mathrm{Ker}(\chi)})_\bbQ \simeq K_0(\cA)_\bbQ/_{\!\mathrm{Ker}(\chi_\bbQ)} \simeq \Hom_{\NNum(k)_\bbQ}(U(k)_\bbQ,U(\cA)_\bbQ)\,.$$
Since by assumption $k$ is of characteristic zero, the category $\NNum(k)_\bbQ$ is abelian semi-simple; see \cite[Thm.~1.10]{Semisimple}. This implies in particular that the above $\bbQ$-vector spaces are finite dimensional. Consider now the injective group homomorphism 
\begin{equation}\label{eq:injective}
K_0(\cA)/_{\!\mathrm{Ker}(\chi)} \too \Hom_\bbZ(K_0(\cA)/_{\!\mathrm{Ker}(\chi)}, \bbZ)
\end{equation}
induced by the above bilinear form $\chi$. Since the $\bbQ$-vector space $(K_0(\cA)/_{\!\mathrm{Ker}(\chi)})_\bbQ$ is finite dimensional, the right-hand side of \eqref{eq:injective} is finitely generated. Hence, we conclude finally that the abelian group $K_0(\cA)/_{\!\mathrm{Ker}(\chi)}$ is also finitely generated.
%---------------------------------------------------------------------------------------------------
\section{Proof of Proposition \ref{thm:main1}}
%---------------------------------------------------------------------------------------------------
\label{sec:proof:main1}
Consider the following composition
\begin{equation}\label{eq:comp-etale}
\Etale(k)\subseteq \Chow(k)\stackrel{\pi}{\too} \Chow(k)/_{\!-\otimes \bbZ(1)}\,,
\end{equation}
where $\bbZ(1)$ stands for the Tate motive and $\Chow(k)/_{\!-\otimes \bbZ(1)}$ for the orbit category; see \cite[\S7]{CvsNC}. As explained in the proof of \cite[Thm.~1.1]{Artin}, the functor \eqref{eq:comp-etale} is not only additive and symmetric monoidal but moreover fully-faithful. Recall now from \cite[Props.~4.1 and 6.6]{MT} the construction of the additive symmetric monoidal functor $\Psi$ making the following diagram commute
\begin{equation}\label{eq:square}
\xymatrix@C=3em@R=1.5em{
\mathrm{etale}(k)^\op \ar[d]_-{M} \ar@{=}[rrr] &&& \mathrm{etale}(k)^\op \ar[dd]^-{X \mapsto U(\perf_\dg(X))} \\
\Etale(k) \ar[d]_-{\eqref{eq:comp-etale}} &&&\\
\Chow(k)/_{\!-\otimes \bbZ(1)} &&& \CSep(k) \ar[lll]^-{\Psi} \,,
}
\end{equation}
where $\mathrm{etale}(k)$ stands for the category of finite {\'e}tale $k$-schemes. Since $\Psi$ is symmetric monoidal we conclude automatically from \eqref{eq:square} that $\Etale(k) \simeq \CSep(k)$.

Let us now prove that $\Cov(G) \simeq \Perm(G)$. Given finite $G$-sets $S_1$ and $S_2$, a simple verification shows that the assignments $S_1 \mapsto \bbZ[S_1]$ and 
\begin{eqnarray*}
\Map^G(S_1 \times S_2,\bbZ) \too \Hom_{\bbZ[G]}(\bbZ[S_1],\bbZ[S_2]) && \alpha \mapsto (s_1 \mapsto \sum_{s_2 \in S_2} \alpha(s_1,s_2) s_2)
\end{eqnarray*}
give rise to a functor $\Cov(G) \to \mathrm{Perm}(G)$. Thanks to the natural identifications
\begin{eqnarray*}
\bbZ[S] \stackrel{s \mapsto \delta_s}{\simeq} \Fun(S,\bbZ) && \bbZ[S_1]\oplus \bbZ[S_2] \simeq \bbZ[S_1 \amalg S_2]\,,
\end{eqnarray*}
this latter functor is moreover fully-faithful and additive. By definition of $\Perm(G)$, we conclude that it is furthermore essentially surjective and hence and equivalence.

Let us now prove that $\Hecke(G) \simeq \Cov(G)$. Given closed subgroups $H,K \subseteq G$ of finite index, we claim that the assignments $H \mapsto G/H$ and
\begin{equation}\label{eq:maps1}
\Map(H\backslash G /K, \bbZ) \too \Map^G(G/H\times G/K, \bbZ) \quad \alpha \mapsto \alpha'(\overline{g_1}, \overline{g_2}):= \alpha(g_1^{-1} g_2)
\end{equation}
give rise to a fully-faithful functor $\Heck(G) \to \Cov(G)$. Clearly, the identities are preserved. Given closed subgroups $H,K,L \subseteq G$ and functions $\alpha \in \Map(H\backslash G/K,\bbZ)$ and $\beta \in \Map(K\backslash G/L,\bbZ)$, the value of $\alpha' \ast \beta'$ (resp. $(\alpha \star \beta)'$) at $(\overline{g_1}, \overline{g_3})$ equals
\begin{eqnarray*}
\sum_{\overline{g_2} \in G/K} \alpha(g_1^{-1} g_2) \cdot \beta(g_2^{-1}g_3) && (\text{resp.} \sum_{\overline{h} \in G/K} \alpha(h^{-1}) \cdot \beta(hg_1^{-1}g_3))\,.
\end{eqnarray*}
Since the right-hand side can be obtained from the left-hand side via the substitution $\overline{g_2} \mapsto g_1 \overline{h^{-1}}$, we hence conclude that $\alpha' \ast \beta' = (\alpha \star \beta)'$, \ie that the composite law is also preserved. In what concerns fully-faithfulness, note that \eqref{eq:maps1} is an isomorphism; it inverse is given by $\gamma \mapsto \gamma(\overline{1},-)$. This implies our claim. Now, consider the (unique) fully-faithful functor $\Hecke(G) \to \Cov(G)$ which extends the above functor and preserves finite direct sums. Since every finite $G$-set $S$ decomposes into the disjoint union $\amalg_i G/H_i$ of its orbits, we conclude (using Galois theory) that the latter functor is moreover essentially surjective and hence an equivalence.

Finally, recall from \cite[\S4.1.6]{Andre} that the idempotent completion of $\Etale(k)$ identifies with the category of (integral) Artin motives $\AM(k)$. This achieves the proof.

\begin{remark}\label{rk:equivalence-G}
As the above proof shows, given an arbitrary group $G$, the categories $\Cov(G), \Hecke(G)$ and $\Perm(G)$ are equivalent.
\end{remark}

\begin{remark}
By composing $\Cov(G) \simeq \Perm(G)$ with the inverse of \eqref{eq:equivalence-MP}, we obtain the additive symmetric monoidal equivalence of categories (see Notation~\ref{not:new}):
\begin{eqnarray}\label{eq:equivalence-new}
\Cov(G) \stackrel{\simeq}{\too} \CSep(k) && S \mapsto U(k_S) \,.
\end{eqnarray}
Given finite $G$-sets $S_1$ and $S_2$, the induced isomorphism 
$$ \Map^G(S_1 \times S_2, \bbZ) \stackrel{\sim}{\too} \Hom_{\CSep(k)}(U(k_{S_1}),U(k_{S_2})) \simeq K_0(k_{S_1 \times S_2})$$
sends the characteristic function $\delta_{(s_1,s_2)}$ of the $G$-orbit of $(s_1,s_2) \in S_1 \times S_2$ to the class $[k_{(s_1,s_2)}]$ of the finitely generated projective right $k_{S_1 \times S_2}$-module $k_{(s_1,s_2)}$.
\end{remark}
%---------------------------------------------------------------------------------------------------
\section{Proof of Theorem \ref{thm:inseparable}}
%---------------------------------------------------------------------------------------------------
Note first that every ring homomorphism $R \to R'$ gives rise to a well-defined functor $\Hmo_0(k)_R \to \Hmo_0(k)_{R'}$. Hence, since $\bbZ[1/p]$ is initial among all the rings containing $1/p$, it suffices to prove the particular case $R=\bbZ[1/p]$.

Let us denote by $\iota: k \to l$ the field extension homomorphism. Recall from \eqref{eq:bimodule2}\text{-}\eqref{eq:bimodule3} that $\iota$ gives rise to a $k\text{-}l$-bimodule ${}_\iota l$ and also to a $l\text{-}k$-bimodule $l_\iota$. Note that since the extension $l/k$ is finite, $l_\iota \in \rep(l,k)$. Consider now the composition
\begin{equation}\label{eq:comp-1}
U(k) \stackrel{[{}_\iota l]}{\too} U(l) \stackrel{[l_\iota]}{\too} U(k)\,.
\end{equation}
By definition of the category $\Hmo_0(k)$, \eqref{eq:comp-1} identifies with the Grothendieck class $[l] \in K_0 \rep(k,k) \simeq K_0(k) \simeq \bbZ$. Hence, since $[l:k]=p^r$, we conclude that \eqref{eq:comp-1} is equal to $p^r \cdot \id_{U(k)}$. Consider now the other composition
\begin{equation}\label{eq:comp-2}
U(l)  \stackrel{[l_\iota]}{\too} U(k) \stackrel{[{}_\iota l]}{\too} U(l)\,.
\end{equation}
In this case, \eqref{eq:comp-2} identifies with the class $[l\otimes_k l] \in K_0\rep(l,l) \simeq K_0(l \otimes l)$. Since by hypothesis $l/k$ is a purely inseparable field extension, \cite[Lem.~9.6]{TV} implies that the $k$-algebra $l \otimes_k l$ is local. Hence, $K_0(l \otimes_k l) \simeq \bbZ$. Using the equality 
$$\frac{\mathrm{dim}_k(l \otimes_k l)}{\mathrm{dim}_k(l)} =\frac{p^{r^2}}{p^r}=p^r$$ 
and the fact that the identity endomorphism of $U(l)$ corresponds to the class $[l] \in K_0 \rep(l,l)$, we conclude that \eqref{eq:comp-2} is equal to $p^r \cdot \id_{U(l)}$. The proof follows now from the fact that the image of \eqref{eq:comp-1} (or \eqref{eq:comp-2}) under the functor \eqref{eq:functor3} (with $R=\bbZ[1/p]$) is an isomorphism in $\Hmo_0(k)_{\bbZ[1/p]}$.
%---------------------------------------------------------------------------------------------------
\section{Proof of Theorem \ref{thm:main2}}\label{sec:proof-main2}
%---------------------------------------------------------------------------------------------------
For technical reasons, we will replace the categories of separable $k$-algebras and $G$-sets by their skeletons. Clearly, this procedure preserves
the associated motivic categories up to equivalence.
We treat first the case of the category $\Cov'(G)$. Consider it as a {\em graph}, \ie as a category without units and composition. Let us start by constructing a graph isomorphism $Q : \Cov'(G) \stackrel{\simeq}{\to} \Sep(k)$. On objects we set $Q((S,A)):= U(A)$. This assignment establishes a bijection (because we are working with skeletal categories!) between the objects of $\Cov'(G)$ and the objects of $\Sep(k)$. Its inverse is given by $U(A) \mapsto (\Hom_{k\text{-}\mathrm{alg}}(Z(A),k_{\mathrm{sep}}), A)$, where $Z(A)$ stands for the center of $A$. Given objects $(S_1,A)$ and $(S_2,B)$ of $\Cov'(G)$ and $(s_1,s_2) \in S_1 \times S_2$, consider the following $G$-invariant map
$$ 
\delta'_{(s_1,s_2)}(t_1,t_2):= \begin{cases}
\mathrm{ind}_{(s_1,s_2)}(A^\op \otimes B) & \text{when $(t_1,t_2) \in G(s_1,s_2)$}\\
0 & \mathrm{otherwise}\,.
\end{cases}
$$
Note that these maps form a $\bbZ$-basis of $\Map^{G,A,B}(S_1 \times S_2, \bbZ)$. Making use of them, we set $Q(\delta'_{(s_1,s_2)}):=[I_{(s_1,s_2)}] \in K_0(A^\op \otimes B)$, where $I_{(s_1,s_2)}$ stands for the minimal ideal of the central simple $k_{(s_1,s_2)}$-algebra $(A^\op \otimes B)_{(s_1,s_2)}$. Thanks to Lemma \ref{lem:aux}(i) below (with $A$ replaced by $A^\op \otimes B$), we obtain an isomorphism 
$$ \Map^{G,A,B}(S_1\times S_2, \bbZ) \stackrel{\sim}{\too} \Hom_{\Sep(k)}(U(A),U(B)) \simeq K_0(A^\op \otimes B)\,.$$
This concludes the construction of the graph isomorphism $Q$. To every dg category $\cA$ we can associate the dg $k_{\mathrm{sep}}$-linear category $\cA \otimes k_{\mathrm{sep}}$ obtained by tensoring the cochain complexes of $k$-vector spaces $\cA(x,y)$ with $k_{\mathrm{sep}}$. This assignment is functorial on $\cA$ and, as proved in \cite[\S7]{Artin}, gives rise to an additive symmetric monoidal functor $-\otimes k_{\mathrm{sep}}: \NChow(k) \to \NChow(k_{\mathrm{sep}})$. Making use of it, consider now the following diagram of graphs
\begin{equation}\label{eq:diagram-last}
\xymatrix{
\Cov'(G) \ar[d]_-{(S,A) \mapsto S} \ar[r]^-Q_-{\simeq} & \Sep(k) \ar[d]^-\phi \ar[r]^-{-\otimes k_{\mathrm{sep}}} & \Sep(k_{\mathrm{sep}}) \ar[r]^-{\simeq} & \CSep(k_{\mathrm{sep}}) \ar@{=}[d] \\
\Cov(G) \ar[r]_-{\eqref{eq:equivalence-new}}^-{\simeq} & \CSep(k) \ar[rr]_-{-\otimes k_{\mathrm{sep}}} && \CSep(k_{\mathrm{sep}})\,,
}
\end{equation}
where $\phi$ is the unique graph morphism making the left-hand side square commute. Given an object $(S,A)$ of $\Cov'(G)$, we have $U(A \otimes k_{\mathrm{sep}})\simeq U(k_S \otimes k_{\mathrm{sep}})$. Moreover, thanks to Lemma \ref{lem:aux}(ii) below (with $A$ replaced by $A^\op \otimes B$), the following equality
$$ [I_{(s_1,s_2)} \otimes k_{\mathrm{sep}}]=\mathrm{ind}_{(s_1,s_2)}(A^\op \otimes B) \cdot [k_{(s_1,s_2)}\otimes k_{\mathrm{sep}}]$$
holds in the Grothendieck group $K_0((A^\op \otimes B)\otimes k_{\mathrm{sep}})$. These two facts imply that the above diagram \eqref{eq:diagram-last} is commutative. Note that all the graph morphisms of the right-hand side rectangle are functors, except a priori $\phi$, and that $-\otimes k_{\mathrm{sep}}$ is moreover faithful. This implies that $\phi$ is also a functor. Using the left-hand side commutative square, we hence conclude that $\Cov'(G)$ is a well-defined additive symmetric monoidal category and that $Q$ is an equivalence of categories.

The case of the category $\Hecke'(G)$ is similar. Let us start by constructing a graph isomorphism $Q':\Hecke'(G) \stackrel{\simeq}{\to} \Cov'(G)$. On objects we set $Q'((H,A)):= (G/H,A)$. On morphisms we observe that \eqref{eq:maps1} restricts to an isomorphism 
$$ \Map^{A,B}(H\backslash G/K, \bbZ) \stackrel{\sim}{\too} \Map^{G,A,B}(G/H\times G/K,\bbZ)\,.$$
Consider now the following commutative diagram of graphs:
\begin{equation}\label{eq:com-diagram-last}
\xymatrix{
\Hecke'(G) \ar[d]_-{(H,A) \mapsto H} \ar[rr]^-{Q'}_-{\simeq} && \Cov'(G) \ar[d]^-{(S,A) \mapsto S} \\
\Hecke(G) \ar[rr]^-\simeq_-{H \mapsto G/H} && \Cov(G)\,.
}
\end{equation}
As above, \eqref{eq:com-diagram-last} allows us to conclude that $\Hecke'(G)$ is a well-defined additive category and that $Q'$ is an equivalence of categories.
\begin{lemma}\label{lem:aux}
Let $S$ be a finite $G$-set and $A$ an Azumaya algebra over $k_S$; see Notation \ref{not:new}. Given $s \in S$, let us write $I_s$ for the minimal ideal of the central simple $k_s$-algebra $A_s$. Under these notations, the following holds:
\begin{itemize}
\item[(i)] The Grothendieck group $K_0(A)$ is freely generated by the classes $[I_s]$ of the finitely generated projective right $A$-modules $I_s$, where $s$ runs through a set of representatives of the $G$-orbits in $S$.
\item[(ii)] The image of $[I_s]$ under the group homomorphism
$$ K_0(A) \stackrel{-\otimes k_{\mathrm{sep}}}{\too} K_0(A \otimes k_{\mathrm{sep}}) \stackrel{\mathrm{Morita}}{\simeq} K_0(k_S \otimes k_{\mathrm{sep}})$$
identifies with $\mathrm{ind}_s(A) \cdot [k_s \otimes k_{\mathrm{sep}}]$.
\end{itemize}
\end{lemma}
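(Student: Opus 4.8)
The plan is to reduce both statements to the case where $S$ consists of a single $G$-orbit. Write $S=\coprod_i Gs_i$ for the orbit decomposition; then $k_S\simeq\prod_i k_{s_i}$ with each $k_{s_i}=k_{\mathrm{sep}}^{H_i}$ a finite separable field extension of $k$ (here $H_i$ is the stabilizer of $s_i$), the idempotents of $k_S$ split $A$ as a product $A\simeq\prod_i A_{s_i}$ of central simple $k_{s_i}$-algebras, and the right $A$-module $I_{s_j}$ is supported on the $j$-th factor. Consequently $K_0(A)\simeq\bigoplus_i K_0(A_{s_i})$, and since the base-change functor $(-)\otimes k_{\mathrm{sep}}$ commutes with finite products this decomposition is compatible with the two maps appearing in the statement. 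It therefore suffices to treat the case $S=Gs$, in which $l:=k_S=k_s=k_{\mathrm{sep}}^H$ is a field and $A=A_s$ is a central simple $l$-algebra; write $A\simeq M_{r\times r}(D)$ by the Wedderburn theorem, with $D$ a division $l$-algebra.

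Statement (i) is then immediate: since $A$ is simple Artinian, every finitely generated projective right $A$-module is a finite direct sum of copies of the unique simple module $I_s$, so $K_0(A)\simeq\bbZ$ with $[I_s]$ as generator.

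For (ii) I would base-change to $k_{\mathrm{sep}}$. As $l/k$ is separable of degree $|Gs|$, Galois theory yields $l\otimes k_{\mathrm{sep}}\simeq\prod_{s'\in Gs}k_{\mathrm{sep}}$, one factor for each of the $|Gs|$ embeddings $l\hookrightarrow k_{\mathrm{sep}}$ (equivalently, for each point of the orbit $Gs$); hence $A\otimes k_{\mathrm{sep}}\simeq\prod_{s'\in Gs}(A\otimes_l k_{\mathrm{sep}})$, and since $\mathrm{Br}(k_{\mathrm{sep}})=0$ each factor is isomorphic to $M_{e\times e}(k_{\mathrm{sep}})$ with $e:=\mathrm{deg}(A)$. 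The identification $K_0(A\otimes k_{\mathrm{sep}})\simeq K_0(k_S\otimes k_{\mathrm{sep}})$ of the statement is the product over these factors of the Morita equivalence $M_{e\times e}(k_{\mathrm{sep}})\sim k_{\mathrm{sep}}$, which on $K_0$ sends the class of the simple module to $[k_{\mathrm{sep}}]$; in particular it sends $[k_s\otimes k_{\mathrm{sep}}]=[l\otimes k_{\mathrm{sep}}]$ (the free rank-one module) to $(1,\dots,1)$. It then remains to count multiplicities factor by factor: $I_s\otimes_l k_{\mathrm{sep}}$ has $k_{\mathrm{sep}}$-dimension $\dim_l I_s=r\cdot\dim_l D=r\cdot\mathrm{ind}(A)^2$, whereas the simple right $M_{e\times e}(k_{\mathrm{sep}})$-module has dimension $e=r\cdot\mathrm{ind}(A)$; dividing, $I_s\otimes_l k_{\mathrm{sep}}$ is $\mathrm{ind}(A)$ copies of the simple module in each factor. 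Therefore the image of $[I_s]$ equals $\mathrm{ind}(A)\cdot(1,\dots,1)=\mathrm{ind}_s(A)\cdot[k_s\otimes k_{\mathrm{sep}}]$, which is (ii).

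The routine part is the dimension bookkeeping; the only point needing care is to verify that the identification $K_0(A\otimes k_{\mathrm{sep}})\simeq K_0(k_S\otimes k_{\mathrm{sep}})$ invoked in the lemma is precisely the factorwise Morita identification described above (equivalently, the one coming from the split Azumaya structure of $A\otimes k_{\mathrm{sep}}$ over its center), so that the generator conventions match and the map agrees with the one used in the proof of Theorem \ref{thm:main2}. I do not anticipate any genuine obstacle beyond this.
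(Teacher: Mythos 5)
Your proposal is correct and follows essentially the same route as the paper: decompose $A$ along the $G$-orbits of $S$ (equivalently, along the idempotents of $k_S$) to reduce to a single central simple $k_s$-algebra, where $K_0\simeq\bbZ$ is generated by $[I_s]$, and then base-change to $k_{\mathrm{sep}}$, splitting the algebra factorwise and identifying the multiplicity $\mathrm{ind}_s(A)$. The only difference is that you spell out the dimension count for that multiplicity, which the paper leaves implicit.
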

\begin{proof}
The Azumaya $k_S$-algebra $A$ is Morita equivalent to the product $\prod_s A_s$, where $s$ runs through a set of representatives of the $G$-orbits in $S$. Therefore, the proof of item (i) follows from the fact that the class $[I_s]$ is a generator of the Grothendieck group $K_0(A_s) \simeq \bbZ$. In what concerns item (ii), consider the following commutative diagrams
$$
\xymatrix{
K_0(A) \ar[d]_-{-\otimes_A A_{s'}} \ar[rr]^-{-\otimes k_{\mathrm{sep}}} && K_0(A \otimes k_{\mathrm{sep}}) \ar[d]_-{(-\otimes_A A_{s'})\otimes k_{\mathrm{sep}}} \ar[rr]^-{\mathrm{Morita}}_-{\sim} && K_0(k_S \otimes k_{\mathrm{sep}}) \ar[d]^-{(-\otimes_{k_S}k_{s'})\otimes k_{\mathrm{sep}}} \\
K_0(A_{s'}) \ar[rr]_-{-\otimes k_{\mathrm{sep}}} && K_0(A_{s'} \otimes k_{\mathrm{sep}}) \ar[rr]_-{\mathrm{Morita}}^-{\sim} && K_0(k_{s'} \otimes k_{\mathrm{sep}})\,,
}
$$
where $s'$ runs through a set of representatives of the $G$-orbits in $S$. Under the canonical isomorphism $K_0(k_S \otimes k_{\mathrm{sep}}) \simeq \prod_{s'} K_0(k_{s'} \otimes k_{\mathrm{sep}})$, the right vertical map corresponds to the projection into the $s'^{\mathrm{th}}$-factor. Moreover, $[I_s]\otimes_A A_{s'}$ is equal to $[I_s] \in K_0(A_{s'})$ when $s'=s$ and equal to $0$ when $s'\neq s$. Therefore, the proof of item (ii) follows from the fact that the image of $[I_s] \in K_0(A_s)$ under the bottom horizontal composition agrees with $\mathrm{ind}(A_s) \cdot [k_s \otimes k_{\mathrm{sep}}]$.
\end{proof}
%---------------------------------------------------------------------------------------------------
\section{Proof of Proposition \ref{prop:ring}}
%---------------------------------------------------------------------------------------------------
Under the equivalence $\Sep(k) \simeq \Cov'(G)$ of Theorem \ref{thm:main2}, $\CSA(k)$ identifies with the full subcategory of $\Cov'(G)$ consisting of those objects $(S,A)$ with $S$ a fixed singleton $\{s\}$. Consequently, we obtain the following identifications
\begin{equation}\label{eq:identification-first}
\Hom_{\CSA(k)}(U(A_i),U(A_j))\simeq \mathrm{ind}(A^\op_i \otimes A_j) \cdot \bbZ\,,
\end{equation}
under which the composition law of $\CSA(k)$ corresponds to multiplication. Since $\mu_{ij}:=\mathrm{ind}(A^\op_i \otimes A_j)$, the proof of item (i) follows then from the definition of $\Lambda(A_1, \ldots, A_n)$. The proof of item (ii) is standard and we leave it to the reader.
%---------------------------------------------------------------------------------------------------
\section{Proof of Theorem \ref{thm:comprehensive} and Proposition \ref{prop:groupcase}}
%---------------------------------------------------------------------------------------------------
Let $A_1, \ldots, A_n$ be central simple $k$-algebras as in Proposition \ref{prop:ring} and $\Lambda:=\Lambda(A_1,\ldots,A_n)$. We start by specializing Arnold's results \cite{Arnold} to our situation.
\begin{theorem} \label{thm:Arnoldthm}
{(see \cite[Thm.~I]{Arnold})} Let $P_1,\ldots,P_n \in \mathrm{Proj}(\Lambda)$ be the right $\Lambda$-modules given by the rows of the matrix representation \eqref{eq:matrix} of $\Lambda$, and $Q \in \mathrm{Proj}(\Lambda)$ an indecomposable right $\Lambda$-module. Then, for every $p \in \cP$ there exists an integer $\varrho_p\in \{1,\ldots, n\}$ such that $Q_{(p)}\simeq (P_{\varrho_p})_{(p)}$.
\end{theorem}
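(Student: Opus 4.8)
The statement is, as the reference indicates, a direct specialization of \cite[Thm.~I]{Arnold} to our ring $\Lambda=\Lambda(A_1,\ldots,A_n)$, so the plan is to fit $\Lambda$ into Arnold's framework and then transcribe his conclusion. First I would record the structural features that matter. By the matrix description \eqref{eq:matrix}, $\Lambda$ is the subring of $M_n(\bbZ)$ whose $(i,j)$-entry is $\mu_{ij}\bbZ$, where $\mu_{ij}=\ind(A_i^\op\otimes A_j)$ satisfies $\mu_{ii}=1$, $\mu_{ij}=\mu_{ji}$, and $\mu_{ij}\mid\mu_{ik}\mu_{kj}$ for all $i,j,k$ — the last relation because the Brauer class of $A_i^\op\otimes A_j$ is the product of those of $A_i^\op\otimes A_k$ and $A_k^\op\otimes A_j$, and the index of a tensor product divides the product of the indices (as already used around \eqref{eq:graded-ring}). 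Thus, as observed in the Remark following Proposition~\ref{prop:ring}, $\Lambda$ is a $\bbZ$-order in the central simple $\bbQ$-algebra $M_n(\bbQ)$, of precisely the ``tiled'' type to which \cite[Thm.~I]{Arnold} applies, and the distinguished indecomposable projectives in \loccit{} are exactly the row modules $P_1,\ldots,P_n$ of \eqref{eq:matrix}, i.e.\ the indecomposable summands of $\Lambda$ as a right module over itself (their endomorphism rings $\mu_{ii}\bbZ=\bbZ$ being local already forces indecomposability).

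Since the assertion concerns only the localizations $Q_{(p)}$, I would then fix a prime $p$ and pass to $\Lambda_{(p)}=\Lambda\otimes\bbZ_{(p)}$ (and, if Arnold's statement is phrased over the completion, to $\Lambda\otimes\bbZ_p$). This $p$-local order depends on the $\mu_{ij}$ only through the valuations $e_{ij}:=v_p(\mu_{ij})$, and the relations above translate into $e_{ii}=0$, $e_{ij}=e_{ji}$ and $e_{ij}\le e_{ik}+e_{kj}$; this symmetric exponent matrix with zero diagonal is exactly the shape for which Arnold's local classification is carried out (one can, for instance, stratify $\{1,\ldots,n\}$ by the equivalence $i\sim j\iff e_{ij}=0$ and by the resulting hierarchy among the $e_{ij}$ to put $\Lambda_{(p)}$ in the requisite normal form). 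At this point \cite[Thm.~I]{Arnold} applies and describes the finitely generated indecomposable projectives over $\Lambda$: each such $Q$, localized at $p$, is isomorphic to one of the $(P_i)_{(p)}$, which picks out the index $\varrho_p$ and is the claim.

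The only genuine work — and where I expect the friction to be — is the bookkeeping needed to certify that the hypotheses of \cite[Thm.~I]{Arnold} hold for $\Lambda$ and that Arnold's distinguished generators coincide with our $P_i$: reconciling conventions (completion $\bbZ_p$ versus localization $\bbZ_{(p)}$, ``indecomposable over $\Lambda$'' versus ``over $\Lambda_{(p)}$'', row versus column modules), and checking that no auxiliary non-degeneracy hypothesis of \loccit{} is violated. Degenerate situations are harmless: if $[A_i]=[A_j]$ then $\mu_{ij}=1$ and $P_i\simeq P_j$, so one may as well assume the $A_i$ pairwise Brauer-inequivalent when invoking the statement. If one wanted a self-contained argument instead of citing Arnold, one would reprove the $p$-local classification directly — Krull--Schmidt over $\Lambda_{(p)}$ via the fact that endomorphism rings of indecomposable projectives are local, followed by an induction on the hierarchy of the $e_{ij}$ — but that is precisely the content of \cite{Arnold}, so there is nothing to gain by redoing it here.
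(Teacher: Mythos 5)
Your proposal takes essentially the same route as the paper, which gives no independent argument either: it simply records that $\Lambda(A_1,\ldots,A_n)$ is a $\bbZ$-order of the tiled shape \eqref{eq:matrix} (closed under multiplication because $\mu_{ij}\mid\mu_{ik}\mu_{kj}$), identifies $P_1,\ldots,P_n$ with the indecomposable summands of $\Lambda$ given by its rows, and quotes Arnold's Theorem~I for the $p$-local conclusion. One small slip worth fixing: $\End(P_i)\simeq\mu_{ii}\bbZ=\bbZ$ is not a local ring, so indecomposability of $P_i$ follows from the absence of nontrivial idempotents in $\bbZ$ rather than from locality (which only becomes available after localizing at $p$).
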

\begin{remark}\label{rk:rows}
Note that $P_1, \ldots, P_n$ are the images of $U(A_1), \ldots, U(A_n)$ under the equivalence of categories $\varphi$ of Proposition \ref{prop:ring}(ii).
\end{remark}
Arnold's result \cite[Thm.~I]{Arnold} also applies to each one of the rings $\Lambda_{(p)}$. In these cases, we obtain the following result:
\begin{proposition} \label{prop:fgen} The indecomposable finitely generated projective right $\Lambda_{(p)}$-modules are of the form $(P_i)_{(p)}$ with $i \in \{1, \ldots, n\}$.
\end{proposition}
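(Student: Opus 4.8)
The plan is to apply Arnold's classification \cite[Thm.~I]{Arnold} directly to the order $\Lambda_{(p)}$, taking advantage of the fact that the base ring $\bbZ_{(p)}$ has a unique nonzero prime, so that the ``local'' conclusion of that theorem becomes a ``global'' one. First I would record that $\Lambda_{(p)}$ is again an order of the type treated in \cite{Arnold}: writing $a_{ij}:=v_p(\mu_{ij})$ for the $p$-adic valuation of $\mu_{ij}=\mathrm{ind}(A_i^\op\otimes A_j)$, localization at $p$ gives $\mu_{ij}\bbZ_{(p)}=p^{a_{ij}}\bbZ_{(p)}$, so that $\Lambda_{(p)}=(p^{a_{ij}}\bbZ_{(p)})_{1\le i,j\le n}\subset M_n(\bbQ)$ is a tiled $\bbZ_{(p)}$-order. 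Its exponent matrix satisfies $a_{ii}=0$, $a_{ij}=a_{ji}$, and the triangle inequality $a_{ik}\le a_{ij}+a_{jk}$; the latter because $[A_i^\op\otimes A_k]=[A_i^\op\otimes A_j]+[A_j^\op\otimes A_k]$ in $\Br(k)$ forces $\mu_{ik}\mid\mu_{ij}\mu_{jk}$, and passing to $p$-adic valuations gives the inequality. This is exactly the condition ensuring that $\Lambda_{(p)}$ is closed under matrix multiplication, hence an order to which Arnold's results apply.

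Then I would invoke \cite[Thm.~I]{Arnold} for $\Lambda_{(p)}$, with $P_1,\ldots,P_n$ now denoting its rows, i.e.\ the localizations $(P_1)_{(p)},\ldots,(P_n)_{(p)}$ of the rows of $\Lambda$, equivalently the summands $e_{ii}\Lambda_{(p)}$ for the diagonal matrix idempotents $e_{ii}$. Arnold's theorem gives, for an indecomposable finitely generated projective right module $Q$ over the order, an isomorphism between the localization of $Q$ at each maximal ideal of the base and the corresponding localization of one of the rows; since $\bbZ_{(p)}$ is a discrete valuation ring with $(p)$ its only nonzero prime and $(\Lambda_{(p)})_{(p)}=\Lambda_{(p)}$, this collapses to $Q\simeq (P_i)_{(p)}$ for some $i\in\{1,\ldots,n\}$. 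Conversely, each $(P_i)_{(p)}=e_{ii}\Lambda_{(p)}$ is indecomposable: $\Lambda_{(p)}$ is module-finite over the commutative local ring $\bbZ_{(p)}$, hence semiperfect, so idempotents lift modulo its Jacobson radical, and $e_{ii}\Lambda_{(p)}e_{ii}=\bbZ_{(p)}$ is local, so $e_{ii}$ is primitive. Thus the indecomposable finitely generated projective right $\Lambda_{(p)}$-modules are exactly $(P_1)_{(p)},\ldots,(P_n)_{(p)}$.

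The \textbf{main obstacle} is a point of bookkeeping rather than of substance: one must confirm that Arnold's results \cite{Arnold}, which are quoted in Theorem \ref{thm:Arnoldthm} for orders over $\bbZ$, are available (either as stated, or after an evident localization of his argument) for tiled orders over an arbitrary discrete valuation ring such as $\bbZ_{(p)}$ — this is precisely the assertion that ``Arnold's result also applies to each one of the rings $\Lambda_{(p)}$'' made just before the statement.
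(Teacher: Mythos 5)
Your proposal is correct and takes essentially the same route as the paper: the paper simply asserts that Arnold's Theorem~I applies to each ring $\Lambda_{(p)}$ and reads off the proposition, which is exactly your observation that over $\bbZ_{(p)}$ the unique maximal ideal makes Arnold's local conclusion global. Your extra verifications (the tiled-order/triangle-inequality check via $\mu_{ik}\mid\mu_{ij}\mu_{jk}$ and the indecomposability of the rows $(P_i)_{(p)}$), as well as the bookkeeping point you flag about Arnold's hypotheses over a discrete valuation ring, are details the paper leaves implicit.
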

Arnold's work \cite{Arnold} gives also rise to the following results:
\begin{proposition} \label{prop:localp} Assume given for any $p\in \cP$ a right $\Lambda_{(p)}$-module $P_p \in \mathrm{Proj}(\Lambda_{(p)})$ of $\bbZ_{(p)}$-rank $rn$. Then, there exists a right $\Lambda$-module\footnote{Thanks to Theorem \ref{thm:Arnoldcor}, the right $\Lambda$-module $P$ is moreover unique.} $P \in \mathrm{Proj}(\Lambda)$ such that $P_{(p)}=P_p$ for every $p \in \cP$.
\end{proposition}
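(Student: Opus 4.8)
The plan is to deduce Proposition~\ref{prop:localp} from a classical local-global (``patching'') argument for lattices over the $\bbZ$-order $\Lambda$, combined with the observation that $\Lambda_{(p)}$ is a \emph{maximal} order for all but finitely many primes. Note that the matrix description \eqref{eq:matrix} exhibits $\Lambda$ as a full $\bbZ$-order in the central simple $\bbQ$-algebra $\Sigma:=\Lambda_\bbQ\simeq M_{n\times n}(\bbQ)$.

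First I would isolate the ``good'' primes. Let $T\subset\cP$ be the finite set of primes dividing $\prod_{i<j}\mu_{ij}$. For $p\notin T$ every $\mu_{ij}$ is a unit in $\bbZ_{(p)}$, so \eqref{eq:matrix} gives $\Lambda_{(p)}=M_{n\times n}(\bbZ_{(p)})$, a maximal order which is Morita equivalent to the principal ideal domain $\bbZ_{(p)}$; in particular (compare Proposition~\ref{prop:fgen}) there is, up to isomorphism, a unique object of $\mathrm{Proj}(\Lambda_{(p)})$ of $\bbZ_{(p)}$-rank $rn$. Setting $L:=P_1^{\oplus r}\in\mathrm{Proj}(\Lambda)$, where $P_1$ is as in Theorem~\ref{thm:Arnoldthm} (so $P_1$ is a $\Lambda$-lattice of $\bbZ$-rank $n$), we therefore obtain $L_{(p)}\simeq P_p$ for every $p\notin T$. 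Thus only the finitely many primes in $T$ require attention.

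Next I would fix the rational hull and patch. For each $p$ the $\Sigma$-module $P_p\otimes_{\bbZ_{(p)}}\bbQ$ has $\bbQ$-dimension $rn$; since $\Sigma$ is simple there is a unique such module up to isomorphism, namely $V:=L_\bbQ$. Fixing, for each $p\in T$, a $\Sigma$-isomorphism $P_p\otimes\bbQ\isoto V$, we may view $P_p$ as a full $\Lambda_{(p)}$-sublattice of $V$. Now define $M_p:=P_p$ for $p\in T$ and $M_p:=L_{(p)}$ for $p\notin T$. This is a coherent family of full $\Lambda_{(p)}$-lattices in $V$, i.e.\ $M_p=L_{(p)}$ for almost all $p$, so the local-global principle for lattices over $\bbZ$-orders (see Curtis--Reiner \cite{CR}) yields a full $\Lambda$-lattice $P:=\bigcap_p M_p\subset V$ with $P_{(p)}=M_p$, hence with $P_{(p)}\simeq P_p$ for every $p$. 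Finally, projectivity of $P$ is a local condition: for a lattice over a $\bbZ$-order one has $P\in\mathrm{Proj}(\Lambda)$ as soon as $P_\bbQ$ is $\Sigma$-projective (automatic, $\Sigma$ being semisimple) and $P_{(p)}\in\mathrm{Proj}(\Lambda_{(p)})$ for every prime $p$ (cf.\ \cite{CR}); the latter holds because $P_{(p)}=P_p$ is projective by hypothesis for $p\in T$, while $P_{(p)}=L_{(p)}$ is projective for $p\notin T$ as $L=P_1^{\oplus r}$ is $\Lambda$-projective. Alternatively, one can quote Arnold's structure results \cite{Arnold} directly, which package precisely this patching over the bad primes together with the uniqueness asserted in the footnote.

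I expect the main obstacle to be expository rather than conceptual: locating and citing the correct form of the local-global principle for lattices over the \emph{non-maximal} order $\Lambda$ (so that $\bigcap_p M_p$ is finitely generated with the prescribed localizations), together with the local criterion for projectivity of a lattice. One should also check that the freedom in the chosen $\Sigma$-isomorphisms $P_p\otimes\bbQ\isoto V$ at the primes $p\in T$ is harmless, and it is, since only the submodules $M_p\subset V$, and not these identifications, enter the construction of $P$.
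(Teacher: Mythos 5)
Your proof is correct, and at its core it runs on the same engine as the paper's: realize the prescribed local modules as full $\Lambda_{(p)}$-lattices in a common $\bbQ$-vector space, take their intersection, and invoke a local-global principle to identify the localizations of the result (you cite Curtis--Reiner \cite{CR}, the paper cites \cite[Prop.\ I.~5.2]{Fossum}). The difference lies in how the coherence condition ``$M_p=L_{(p)}$ for almost all $p$'' is arranged. The paper first reduces to $r=1$ via Proposition \ref{prop:fgen} and then uses Theorem \ref{thm:Arnoldthm} to write each $P_p$ as $(P_{\varrho_p})_{(p)}$ for a row module of \eqref{eq:matrix}, so that all local lattices sit as finite-index submodules of $[\bbZ\cdots\bbZ]$; you instead observe that away from the finitely many primes dividing the $\mu_{ij}$ the order $\Lambda_{(p)}$ is $M_{n\times n}(\bbZ_{(p)})$, hence the rank hypothesis alone forces $P_p\simeq L_{(p)}$ with $L=P_1^{\oplus r}$, and you keep the given $P_p$ at the bad primes. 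Your route thus avoids Arnold's classification of the local indecomposables entirely (making the proposition logically lighter, at the price of needing the standard local criterion for projectivity of a lattice, which you rightly make explicit and which the paper leaves implicit in its citation); the paper's route gets the almost-everywhere agreement and the projectivity of the intersection for free from the concrete row-module description, at the price of leaning on \cite{Arnold}. One cosmetic remark: the submodules $M_p\subset V$ at the bad primes do depend on the chosen identifications $P_p\otimes\bbQ\isoto V$, but this is harmless exactly as you say, since any choice yields $P_{(p)}\simeq P_p$, and uniqueness up to isomorphism is then Theorem \ref{thm:Arnoldcor}; also note that the symbol $\Sigma$ you use for $\Lambda_\bbQ$ clashes with the graded ring $\Sigma(A_1,\ldots,A_n)$ of the paper, so it should be renamed if incorporated.
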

\begin{proof} Making use of Proposition \ref{prop:fgen}, we can assume without loss of generality that $r=1$, i.e.\ that all the $P_p$'s are indecomposable. Given $p \in \cP$, let $\varrho_p\in \{1, \ldots, n\}$ be such that $(P_{\varrho_p})_{(p)}=P_p$; see Theorem \ref{thm:Arnoldthm}. We need to construct a right $\Lambda$-module $P \in \mathrm{Proj}(\Lambda)$ with the property that $P_{(p)}=(P_{\varrho_p})_{(p)}$ for every $p \in \cP$.
Using the matrix representation \eqref{eq:matrix} of $\Lambda$ and the above Remark \ref{rk:rows}, we observe that all the $P_p$'s may be viewed as submodules of finite index of the right
  $\Lambda$-module $[\bbZ\cdots \bbZ]$. It then suffices to
  take $P:=\bigcap_{p} (P_{\varrho_p})_{(p)}$, where the intersection
  takes place inside $[\bbQ\cdots \bbQ]$; see \cite[Prop.\ I.~5.2]{Fossum}.
\end{proof}

\begin{theorem} \label{thm:Arnoldcor}{(see \cite[Cor. II]{Arnold})} 
Given right $\Lambda$-modules $P,Q \in \mathrm{Proj}(\Lambda)$, one has $P\simeq Q$ if and only if $P_{(p)}\simeq Q_{(p)}$ for every $p \in \cP$.
\label{thm:isolambda}
\end{theorem}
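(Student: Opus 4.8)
The ``only if'' direction is formal: localization $M\mapsto M_{(p)}$ is a functor, so an isomorphism $P\simeq Q$ induces $P_{(p)}\simeq Q_{(p)}$ for every $p\in\cP$. The content is the converse, and the plan is to recognize it as an instance of the principle \emph{``genus $=$ isomorphism class''} for the $\bbZ$-order $\Lambda$, the decisive point being that the idele class obstruction to this principle vanishes here because $\Lambda\otimes\bbQ=M_n(\bbQ)$ is \emph{split} and $\bbZ$ is a principal ideal domain.

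\emph{Setup.} Assume $P_{(p)}\simeq Q_{(p)}$ for all $p$. Tensoring with $\bbQ$ gives $P\otimes\bbQ\simeq Q\otimes\bbQ$ as $\Lambda_\bbQ$-modules, and since $\Lambda_\bbQ=M_n(\bbQ)$ is simple this module is $V:=(\bbQ^{\,n})^{\oplus r}$ for a single integer $r\ge 0$, with $\mathrm{Aut}_{\Lambda_\bbQ}(V)=GL_r(\bbQ)$. Fixing identifications $P\otimes\bbQ\simeq V\simeq Q\otimes\bbQ$ realizes $P$ and $Q$ as full $\Lambda$-lattices inside $V$. Because a $\bbZ$-lattice satisfies $P=\bigcap_p P_{(p)}$ inside $V$ (as in the proof of Proposition \ref{prop:localp}), it suffices to produce a single $g\in GL_r(\bbQ)$ with $g\cdot P_{(p)}=Q_{(p)}$ for \emph{every} $p$; such a $g$ then satisfies $gP=Q$. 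Each local isomorphism $P_{(p)}\simeq Q_{(p)}$ (equivalently $\widehat P_p\simeq\widehat Q_p$) extends $\bbQ$-linearly to some $g_p\in GL_r(\bbQ)$ with $g_p\cdot P_{(p)}=Q_{(p)}$. Let $S\subset\cP$ be the finite set of primes dividing some $\mu_{ij}$ together with the finitely many primes at which $P_{(p)}\neq Q_{(p)}$ inside $V$; for $p\notin S$ we take $g_p=1$, and there $\Lambda_{(p)}=M_n(\bbZ_{(p)})$ is maximal. Thus $(g_p)_p$ defines an element of the finite-adelic group $GL_r(\bbA)$, and at each $p$ the stabilizer $K_p:=\mathrm{Aut}_{\widehat\Lambda_p}(\widehat P_p)$ is a compact open subgroup of $GL_r(\bbQ_p)$, equal to $GL_r(\bbZ_p)$ for $p\notin S$.

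\emph{Patching.} The task is to show $(g_p)_p\in GL_r(\bbQ)\cdot\prod_p K_p$, i.e.\ that the genus of $P$ has class number one. Since $\widehat P_p$ (over the complete local $\widehat\Lambda_p$) is a direct sum of indecomposables with endomorphism ring $\bbZ_p$, the group $K_p$ contains the diagonal units and hence $\det(K_p)\supseteq\bbZ_p^\times$; combined with $GL_r(\bbZ_p)=SL_r(\bbZ_p)\cdot K_p$ this reduces the computation of $|GL_r(\bbQ)\backslash GL_r(\bbA)/\prod_pK_p|$ to that of $|SL_r(\bbZ)\backslash SL_r(\widehat{\bbZ})/\prod_p(SL_r\cap K_p)|$, which is trivial by strong approximation for $SL_r$ (i.e.\ surjectivity of $SL_r(\bbZ)\to SL_r(\bbZ/N)$ for $r\ge 2$; the cases $r\le 1$ are immediate, $K_p=\bbZ_p^\times$ and $|\bbQ^\times\backslash\bbA^\times/\prod\bbZ_p^\times|=|\mathrm{Cl}(\bbZ)|=1$, one may even write $g=\prod_{p\in S}p^{v_p(g_p)}$ by hand). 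Hence $(g_p)_p$ lies in the single double coset, yielding $g\in GL_r(\bbQ)$ with $g\cdot\widehat P_p=g_p\cdot\widehat P_p=\widehat Q_p$ for all $p$, whence $gP=Q$. This recovers Arnold's \cite[Cor.~II]{Arnold}; alternatively one could first use Proposition \ref{prop:fgen} to reduce the local pictures to indecomposables, but the global patching remains the substantive step.

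\emph{Main obstacle.} Everything but the patching is bookkeeping; the real work is verifying that the class set of the genus of $P$ is a single point. For a general $\bbZ$-order this is \emph{false} --- nontrivial class groups of orders are exactly what produces pathologies such as the failure of cancellation in Remark \ref{rem:swan} --- so the argument genuinely exploits the two special features of $\Lambda$: its rational envelope $M_n(\bbQ)$ is split (so the Eichler condition holds trivially and strong approximation applies), and the base ring $\bbZ$ is a PID (so $\bbA^\times=\bbQ^\times\prod_p\bbZ_p^\times$ kills the determinant obstruction).
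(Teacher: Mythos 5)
The paper does not prove Theorem \ref{thm:Arnoldcor} at all: it is quoted directly from Arnold \cite[Cor.~II]{Arnold}, so there is no in-paper argument to compare with, and your proposal is a self-contained substitute. Its strategy --- realize $P,Q$ as full $\Lambda$-lattices in $V=(\bbQ^n)^{\oplus r}$, parametrize the lattices locally isomorphic to $P$ by the double coset space $GL_r(\bbQ)\backslash GL_r(\bbA_f)/\prod_p K_p$ with $K_p=\mathrm{Aut}_{\widehat\Lambda_p}(\widehat P_p)$, and show this class set is a point using $\det(K_p)\supseteq\bbZ_p^\times$ (unit scalars on one indecomposable summand of $\widehat P_p$), $\bbA_f^\times=\bbQ^\times\cdot\prod_p\bbZ_p^\times$, and strong approximation for $SL_r$ --- is the standard Eichler-type genus argument, and it does work here precisely because $\Lambda_\bbQ=M_n(\bbQ)$ and $\mathrm{End}_{\Lambda_\bbQ}(V)=M_r(\bbQ)$ are split and $\bbZ$ is a PID; it also correctly explains why the pathology of Remark \ref{rem:swan} cannot occur for $\Lambda$. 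Two steps are, however, phrased incorrectly even though the intended argument is sound. First, the identity ``$GL_r(\bbZ_p)=SL_r(\bbZ_p)\cdot K_p$'' is neither needed nor true in general, since $K_p$ is not contained in $GL_r(\bbZ_p)$ (it is only conjugate to it at good primes, unless the identification $\mathrm{Aut}_{\Lambda_\bbQ}(V)\simeq GL_r(\bbQ)$ is chosen adapted to $P$); the correct reduction is: choose $\gamma_0\in GL_r(\bbQ)$ with $\det\gamma_0=q$, where $(\det g_p)=q\cdot(u_p)$ with $u_p\in\bbZ_p^\times$, and local elements $c_p\in K_p$ with $\det c_p=u_p$, so that $(\gamma_0^{-1}g_pc_p^{-1})_p\in SL_r(\bbA_f)$. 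Second, the coset space to quote is then $SL_r(\bbQ)\backslash SL_r(\bbA_f)/\prod_p\bigl(K_p\cap SL_r(\bbQ_p)\bigr)$, which is trivial because $SL_r(\bbQ)$ is dense in $SL_r(\bbA_f)$ and the subgroup $\prod_p\bigl(K_p\cap SL_r(\bbQ_p)\bigr)$ is open; writing it as $SL_r(\bbZ)\backslash SL_r(\widehat{\bbZ})/\cdots$ and invoking surjectivity of $SL_r(\bbZ)\to SL_r(\bbZ/N)$ again tacitly assumes $K_p\subseteq GL_r(\bbZ_p)$. With these adjustments (and your separate treatment of $r\le 1$) the proof is complete; Arnold's own argument is more elementary and module-theoretic, whereas yours buys a conceptual explanation (trivial class set of the genus) at the cost of adelic machinery.
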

\begin{corollary} \label{cor:sumcorollary}
Given right $\Lambda$-modules $P,Q \in \mathrm{Proj}(\Lambda)$, $Q$ is a direct summand of $\cP$ if and only if $Q_{(p)}$ is a direct summand of $P_{(p)}$ for every $p \in \cP$.
\end{corollary}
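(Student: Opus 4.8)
The strategy is to reduce the direct-summand criterion to the isomorphism criterion of Theorem~\ref{thm:isolambda}, using Proposition~\ref{prop:localp} to manufacture a global complement out of local ones.

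The ``only if'' direction is immediate: the localization functor $-\otimes_\bbZ\bbZ_{(p)}$ is exact, so a splitting $P\simeq Q\oplus C$ in $\mathrm{Proj}(\Lambda)$ localizes to a splitting $P_{(p)}\simeq Q_{(p)}\oplus C_{(p)}$, exhibiting $Q_{(p)}$ as a direct summand of $P_{(p)}$.

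For the ``if'' direction, assume $Q_{(p)}$ is a direct summand of $P_{(p)}$ for every $p\in\cP$ and choose, for each $p$, some $C_p\in\mathrm{Proj}(\Lambda_{(p)})$ with $P_{(p)}\simeq Q_{(p)}\oplus C_p$. First I would verify the numerical hypothesis needed to glue the $C_p$: since $\Lambda$ is a $\bbZ$-order and projective $\Lambda$-modules are $\bbZ$-free of finite rank, localization preserves the $\bbZ$-rank, whence $\mathrm{rank}_{\bbZ_{(p)}}C_p=\mathrm{rank}_\bbZ P-\mathrm{rank}_\bbZ Q=:m$ is independent of $p$; and since by Proposition~\ref{prop:fgen} every indecomposable finitely generated projective right $\Lambda_{(p)}$-module is one of the rows $(P_i)_{(p)}$, hence of $\bbZ_{(p)}$-rank $n$, we get $m=rn$ for an integer $r\ge 0$. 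Proposition~\ref{prop:localp} then applies to the family $(C_p)_{p\in\cP}$ and produces $C\in\mathrm{Proj}(\Lambda)$ with $C_{(p)}=C_p$ for every $p$.

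Finally, for every $p\in\cP$ one has $(Q\oplus C)_{(p)}\simeq Q_{(p)}\oplus C_p\simeq P_{(p)}$, so Theorem~\ref{thm:isolambda} yields $Q\oplus C\simeq P$; in particular $Q$ is a direct summand of $P$. Every step here is a direct appeal to a result already established, so I do not anticipate a genuine obstacle; the one point deserving a line of care is the rank bookkeeping that lets Proposition~\ref{prop:localp} be invoked, together with the observation that the local complements $C_p$ need not be chosen compatibly in any sense — the global complement $C$ is produced abstractly by Proposition~\ref{prop:localp} and only recognized afterwards, via Theorem~\ref{thm:isolambda}, as a complement of $Q$ inside $P$.
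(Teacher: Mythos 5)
Your proof is correct and follows essentially the same route as the paper: localize, choose local complements, glue them into a global $C\in\mathrm{Proj}(\Lambda)$ via Proposition~\ref{prop:localp}, and conclude $Q\oplus C\simeq P$ from Theorem~\ref{thm:Arnoldcor}. The only difference is that you spell out the rank bookkeeping needed to invoke Proposition~\ref{prop:localp}, a detail the paper leaves implicit.
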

\begin{proof} We focus ourselves on the non-obvious implication. Assume that $Q_{(p)}$ is a direct summand of $P_{(p)}$ for every $p\in \cP$. For each such $p$ choose a complement $Q_{(p)}\oplus R_p\simeq P_{(p)}$. Using Proposition \ref{prop:localp}, we hence obtain a well-defined right $\Lambda$-module $R \in \mathrm{Proj}(\Lambda)$ such that $R_{(p)}=R_p$ for every $p\in \cP$.  Theorem \ref{thm:Arnoldcor} allows us then to conclude that $Q\oplus R\simeq P$.
\end{proof}
Given a prime number $p$, let us denote by $\Hmo_0(k)_{(p)}$ the
$\bbZ_{(p)}$-linear category $\Hmo_0(k)_{\bbZ_{(p)}}$, by $(-)_p$ the functor \eqref{eq:functor3} with $R=\bbZ_{(p)}$, and by
$U(-)_{(p)}$ the functor $U(-)_{\bbZ_{(p)}}$. Under these notations, we have the following combative diagram
\begin{equation}\label{eq:commutative-square}
\xymatrix{
\CSA(A_1,\ldots,A_n)^{\oplus,\natural}\ar[rr]_-{\simeq}^-{\varphi}\ar[d]_{(-)_{(p)}} &&\mathrm{Proj}(\Lambda)\ar[d]^{(-)_{(p)}}\\
\CSA(A_1,\ldots,A_n)^{\oplus,\natural}_{(p)}\ar[rr]^-{\simeq}_-{\varphi_p} &&\mathrm{Proj}(\Lambda_{(p)})\,,\\
}
\end{equation}
where $\varphi_p$ stands for the equivalence $\Hom(\oplus_i U(A_i)_{(p)},-)$; see Proposition \ref{prop:ring}(ii). In what follows, we write $\CSA(k)^{\oplus,\natural}$ from the closure of $\CSA(k)$ under finite direct sums and direct summands.
\begin{lemma} \label{lem:localiso}
Given noncommutative motives $M,M'\in\CSA(k)^{\oplus,\natural}$, one has $M \simeq M'$ if and only if $M_{(p)}\simeq M'_{(p)}$ for every $p \in \cP$.
\end{lemma}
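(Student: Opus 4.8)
The plan is to reduce the statement to Arnold's descent result, Theorem~\ref{thm:Arnoldcor}, via the equivalence $\varphi$ of Proposition~\ref{prop:ring}(ii). The ``only if'' implication is immediate, since the linearization functors $(-)_{(p)}$ preserve isomorphisms; so I would concentrate entirely on the converse.

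First I would perform a bookkeeping reduction. Since $M$ and $M'$ belong to $\CSA(k)^{\oplus,\natural}$, each of them is, by definition of this category, a direct summand of a finite direct sum of objects of the form $U(A)$ with $A$ a central simple $k$-algebra. Choosing a single finite list $A_1,\ldots,A_n$ of central simple $k$-algebras large enough to accommodate the summand presentations of both $M$ and $M'$, I may assume that $M,M' \in \CSA(A_1,\ldots,A_n)^{\oplus,\natural}$. Set $\Lambda := \Lambda(A_1,\ldots,A_n)$ and let $P := \varphi(M)$ and $P' := \varphi(M')$ in $\mathrm{Proj}(\Lambda)$. The commutative square~\eqref{eq:commutative-square} then shows that $\varphi_p(M_{(p)}) \simeq P_{(p)}$ and $\varphi_p(M'_{(p)}) \simeq P'_{(p)}$ in $\mathrm{Proj}(\Lambda_{(p)})$ for every prime number $p$.

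Now assume $M_{(p)} \simeq M'_{(p)}$ for all $p \in \cP$. Applying the equivalence $\varphi_p$ and using the previous identifications, I obtain $P_{(p)} \simeq P'_{(p)}$ in $\mathrm{Proj}(\Lambda_{(p)})$ for every $p$. By Theorem~\ref{thm:Arnoldcor}, this forces $P \simeq P'$ in $\mathrm{Proj}(\Lambda)$, and applying the inverse of the equivalence $\varphi$ yields $M \simeq M'$, as desired.

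The only point requiring care is the reduction itself: arranging that $M$ and $M'$ can be treated simultaneously inside a single category $\CSA(A_1,\ldots,A_n)^{\oplus,\natural}$, and invoking the compatibility of the equivalences $\varphi$ and $\varphi_p$ with the localization functors $(-)_{(p)}$, i.e.\ the commutativity of the square~\eqref{eq:commutative-square}. Once this is in place, the statement is a formal consequence of Arnold's result, with no further computation needed.
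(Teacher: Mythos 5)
Your proposal is correct and follows essentially the same route as the paper: after placing $M$ and $M'$ in a common subcategory $\CSA(A_1,\ldots,A_n)^{\oplus,\natural}$, both arguments use the commutative square \eqref{eq:commutative-square} to transfer the local isomorphisms through $\varphi$ and $\varphi_p$, and then conclude by Arnold's descent result (Theorem \ref{thm:Arnoldcor}) together with the fact that $\varphi$ is an equivalence. The extra care you take in justifying the simultaneous reduction to a single finite list of algebras is exactly the implicit step in the paper's "for suitable central simple $k$-algebras $A_1,\ldots,A_n$".
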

\begin{proof} 
We focus ourselves on the non-obvious implication. Assume that $M_{(p)}\simeq M'_{(p)}$ for every $p \in \cP$. Since $M,M' \in \CSA(A_1, \ldots, A_n)^{\oplus, \natural}$ for suitable central simple $k$-algebras $A_1, \ldots, A_n$, the above commutative diagram \eqref{eq:commutative-square} implies that 
$$ \varphi(M)_{(p)}\simeq \varphi_p(M_{(p)})\simeq \varphi_p(M'_{(p)})\simeq\varphi(M')_{(p)} \in \mathrm{Proj}(\Lambda_{(p)})$$
for every $p \in \cP$. Using Theorem \ref{thm:Arnoldcor} and the fact that $\varphi$ is an equivalence of categories, we hence conclude that $M\simeq M'$.
\end{proof}
\begin{lemma}\label{lem:Brauer2} 
Given central simple $k$-algebras $A$ and $B$, the following holds:
\begin{itemize}
\item[(i)] For every $p\in\cP$, one has $U(A)_{(p)}=U(A^p)_{(p)}$.
\item[(ii)] One has $U(A)_{(p)} \simeq U(B)_{(p)}$ for every $p \in \cP$ if and only if $[A]=[B]$.
\end{itemize}
\end{lemma}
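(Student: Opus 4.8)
The plan is to read everything off from the explicit description of morphisms in $\CSA(k)$ supplied by Proposition~\ref{prop:ring}(i): for central simple $k$-algebras $A,B$ one has $\Hom_{\CSA(k)}(U(A),U(B)) = \mathrm{ind}(A^\op\otimes B)\cdot\bbZ$, realized inside $\bbZ$ so that composition is multiplication of integers and $\mathrm{id}_{U(A)}$ corresponds to $1$. Applying the localization functor \eqref{eq:functor3} with $R=\bbZ_{(p)}$ yields $\Hom_{\Hmo_0(k)_{(p)}}(U(A)_{(p)},U(B)_{(p)}) = \mathrm{ind}(A^\op\otimes B)\cdot\bbZ_{(p)}\subseteq\bbZ_{(p)}$, again with composition given by multiplication. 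All indices below are read inside $\bbZ$ (resp.\ $\bbZ_{(p)}$) via these identifications.

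For (i), I would write $A\simeq A^p\otimes C$ with $C:=\bigotimes_{q\neq p}A^q$ and observe that, since $[C]=\sum_{q\neq p}[A^q]$ lies in the prime-to-$p$ part of $\mathrm{Br}(k)$ and the index and period of a central simple algebra have the same prime divisors \cite[Prop.~4.5.13]{Gille}, the integer $m:=\mathrm{ind}(C)$ is coprime to $p$. As $A^\op\otimes A^p$ (resp.\ $(A^p)^\op\otimes A$) is Brauer-equivalent to $C^\op$ (resp.\ $C$), both $\mathrm{ind}(A^\op\otimes A^p)$ and $\mathrm{ind}((A^p)^\op\otimes A)$ equal $m$. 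Taking generators $f\colon U(A)\to U(A^p)$ and $g\colon U(A^p)\to U(A)$ of the two cyclic Hom-groups, both correspond to $m$, so $g\circ f = m^{2}\,\mathrm{id}_{U(A)}$ and $f\circ g = m^{2}\,\mathrm{id}_{U(A^p)}$; since $m^2\in\bbZ_{(p)}^{\times}$, the localized morphism $f_{(p)}$ becomes an isomorphism in $\Hmo_0(k)_{(p)}$ with inverse $m^{-2}g_{(p)}$, proving $U(A)_{(p)}\simeq U(A^p)_{(p)}$.

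For (ii), the implication $[A]=[B]\Rightarrow U(A)_{(p)}\simeq U(B)_{(p)}$ (all $p$) is immediate from \eqref{eq:Brauer}, which already gives $U(A)\simeq U(B)$ over $\bbZ$. For the converse I would combine the hypothesis with (i) to get $U(A^p)_{(p)}\simeq U(B^p)_{(p)}$ for every $p$, then note that $(A^p)^\op\otimes B^p$ and $(B^p)^\op\otimes A^p$ represent mutually inverse classes in $\mathrm{Br}(k)\{p\}$, so $\mathrm{ind}((A^p)^\op\otimes B^p)=\mathrm{ind}((B^p)^\op\otimes A^p)=p^{e}$ for some $e\geq 0$. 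An isomorphism $U(A^p)_{(p)}\simeq U(B^p)_{(p)}$ is given by a pair of morphisms, each lying in $p^{e}\bbZ_{(p)}$, that compose to $\mathrm{id}=1$; as composition is multiplication, this forces $1\in p^{2e}\bbZ_{(p)}$, hence $e=0$, i.e.\ $(A^p)^\op\otimes B^p$ is split and $[A^p]=[B^p]$. Since this holds for all $p$ and $[A]=\sum_{p\in\cP}[A^p]$, $[B]=\sum_{p\in\cP}[B^p]$ are the $p$-primary decompositions \cite[Prop.~4.1.16]{Gille}, I conclude $[A]=[B]$.

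The only subtle point is the input I am relying on: the precise normalization of the identification in Proposition~\ref{prop:ring} (the part whose verification was left to the reader), namely that composition in $\CSA(k)$ becomes multiplication of integers and the identity becomes $1$ — equivalently, via Lemma~\ref{lem:aux}, that the generator of $\Hom_{\CSA(k)}(U(A),U(B))$ maps to multiplication by $\mathrm{ind}(A^\op\otimes B)$ after extending scalars to $k_{\mathrm{sep}}$. Granting this, everything else is routine bookkeeping with indices and the $p$-primary decomposition.
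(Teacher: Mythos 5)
Your proof is correct, but it follows a different route from the paper's own proof of Lemma \ref{lem:Brauer2}. For item (i) the paper simply invokes \cite[Thm.~2.1]{TV} (for a central simple algebra of $q$-power index one has $U(C)_R\simeq U(k)_R$ once $1/q\in R$) together with the fact that $U(-)_{(p)}$ is symmetric monoidal applied to the $p$-primary decomposition $A=\otimes_q A^q$; you instead reprove the needed special case by hand, using the identification \eqref{eq:identification-first} of Proposition \ref{prop:ring} to see that the generators of $\Hom(U(A),U(A^p))$ and $\Hom(U(A^p),U(A))$ compose to $m^2\cdot\id$ with $m=\mathrm{ind}(\otimes_{q\neq p}A^q)$ prime to $p$, hence become invertible in $\Hmo_0(k)_{(p)}$ --- a valid argument, granted the normalization you flag (composition $=$ multiplication, identity $=1$), which the paper does assert explicitly in the proof of Proposition \ref{prop:ring} via the singleton objects of $\Cov'(G)$. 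For item (ii) the paper's official proof deduces the non-obvious implication from Lemma \ref{lem:localiso} (hence from Arnold's local-global theorem \cite{Arnold}) combined with \eqref{eq:Brauer}; your argument via the multiplication pairing $p^e\bbZ_{(p)}\times p^e\bbZ_{(p)}\to\bbZ_{(p)}$, which rules out a unit composite unless $e=0$, is essentially the Arnold-free alternative that the paper itself records in the Remark immediately following the lemma. What each approach buys: the paper's version is shorter given that \cite{TV} and Arnold's theorem are already in play (Arnold is needed anyway for Theorem \ref{thm:comprehensive}), whereas yours is more self-contained, resting only on the Hom-group description and on standard facts about index and period (\cite[Props.~4.1.16 and 4.5.13]{Gille}); the only cosmetic discrepancy is that, like the paper's own proof, you establish the isomorphism $U(A)_{(p)}\simeq U(A^p)_{(p)}$ rather than the literal equality written in the statement, which is how the lemma is used anyway.
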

\begin{proof}
Recall from \cite[Thm.~2.1]{TV} that given a central simple $k$-algebra $C$ whose index is a prime power $p^r$, we have $U(k)_R \simeq U(C)_R$ for every commutative ring $R$ containing $1/p$. In particular, $U(k)_{(q)}\simeq U(C)_{(q)}$ for every prime number $q \neq p$. Consider the $p$-primary decomposition $A=\otimes_{p \in \cP} A^p$.
Since the functor $U(-)_{(p)}$ is
symmetric monoidal, we hence conclude that $U(A)_{(p)}\simeq
U(A^p)_{(p)}$. This proves item (i). In what concerns item (ii), the non-obvious implication follows from the combination of Lemma \ref{lem:localiso} with Equivalence \eqref{eq:Brauer}.
\end{proof}
\begin{remark}
It is possible to prove Lemma \ref{lem:Brauer2}(ii) without invoking the results of Arnold \cite{Arnold} (\ie\ Theorem \ref{thm:Arnoldcor}). We focus ourselves in the non-obvious implication. Assume that $[A]\neq [B]$ or equivalently that $U(A) \not\simeq U(B)$; see Equivalence \eqref{eq:Brauer}. Thanks to Lemma \ref{lem:Brauer2}(i), we can assume without loss of generality that $\mathrm{ind}(A)$ and $\mathrm{ind}(B)$ are powers of a prime number $p$. Hence, we obtain the identification
\begin{equation}\label{eq:identification1}
\Hom_{\CSA(k)}(U(A),U(B))\stackrel{\eqref{eq:identification-first}}{\simeq} \mathrm{ind}(A^\op \otimes B) \cdot \bbZ = p^s \bbZ
\end{equation}
for some integer $s \geq 1$. Under \eqref{eq:identification1}, the composition bilinear pairing
$$\Hom_{\CSA(k)}(U(A),U(B))\times \Hom_{\CSA(k)}(U(B),U(A)) \too \Hom_{\CSA(k)}(U(A),U(B))$$
identifies with the multiplication pairing $p^s\bbZ \times p^s\bbZ \to \bbZ$. The analogous composition pairing, with $U(-)$ replaced by $U(-)_{(p)}$, identifies also with the multiplication pairing $p^s\bbZ_{(p)} \times p^s\bbZ_{(p)} \to \bbZ_{(p)}$. Since $s \geq 1$, the element $1 \in \bbZ_{(p)}$ is not in the image of the latter pairing. This allows us to conclude that $U(A)_{(p)}\not\simeq U(B)_{(p)}$.
\end{remark}
\subsection*{Proof of Theorem \ref{thm:comprehensive}}
\subsubsection*{Item (i)} Given $M \in \CSA(k)^{\oplus, \natural}$, we need to prove that $M \in \CSA(k)^\oplus$. Clearly, we can assume that $M \in \CSA(A_1, \ldots, A_n)^{\oplus, \natural}$ for suitable central simple $k$-algebras $A_1, \ldots, A_n$. Making use of the equivalence of categories $\varphi$, we have $\varphi(M)\simeq Q_1\oplus \cdots \oplus Q_m$ with $Q_1, \ldots, Q_m \in \mathrm{Proj}(\Lambda)$ indecomposable right $\Lambda$-modules. Without loss of generality, we can also assume that $m=1$; let $Q:=Q_1$.

Now, recall from Theorem \ref{thm:Arnoldthm} that for every $p \in \cP$ there exists an integer $\varrho_p\in \{1, \ldots, n\}$ such that $Q_{(p)}\simeq (P_{\varrho_p})_{(p)}$. Consequently, we obtain the identifications
$$
\varphi_p(M_{(p)})\stackrel{(a)}{\simeq} Q_{(p)}\simeq(P_{\varrho_p})_{(p)}\stackrel{(b)}{\simeq} \varphi(U(A_{\varrho_p}))_{(p)}\stackrel{(c)}{\simeq}\varphi_p(U(A^p_{\varrho_p})_{(p)})\,, 
$$
where (a) follows from the commutative diagram \eqref{eq:commutative-square}, (b) from Remark \ref{rk:rows}, and (c) from Lemma \ref{lem:Brauer2}(i) and \eqref{eq:commutative-square}. Using the fact that $\varphi_p$ is an equivalence of categories, we hence conclude that $M_{(p)}\simeq U(A^p_{\varrho_p})_{(p)}$. Let $B:= \otimes_{p \in \cP} A^p_{\varrho_p}$. Since $M_{(p)}\simeq U(B)_{(p)}$ for every $p\in \cP$, Lemma \ref{lem:localiso} implies that $M \simeq U(B)$. As a consequence, $M \in \CSA(k)^\oplus$.

\subsubsection*{Item (ii)}  Every object in $\CSA(k)^\oplus$ is of the form $U(A_1) \oplus \cdots \oplus U(A_n)$. Therefore, the indecomposable objects must be of the form $U(B)$ with $B$ a central simple $k$-algebra. Since the endomorphism rings $\mathrm{End}_{\CSA(k)}(U(B))\simeq \bbZ$ have no non-trivial idempotents, we conclude that the objects $U(B)$ are indeed indecomposable.

\subsubsection*{Item (iii)}  Let $B$ be a central simple $k$-algebra satisfying the following condition: for every $p \in \cP$ there exists an integer $\varrho_p \in \{1, \ldots, n\}$ such that $[B^p]=[A^p_{\varrho_p}]$. We need to prove that $U(B)$ is an indecomposable direct summand of $U(A_1) \oplus \cdots \oplus U(A_n)$. Consider the equivalence of categories
$$ \varphi: \CSA(A_1, \ldots, A_n,B)^{\oplus,\natural} \simeq \mathrm{Proj}(\Lambda(A_1, \ldots, A_n,B))\,.$$
Let us denote by $P_1, \ldots, P_n, Q$ the images of $U(A_1), \ldots, U(A_n), U( B)$ under $\varphi$. Under these notations, we have the following identifications
$$ Q_{(p)}\stackrel{(a)}{\simeq} \varphi_p(U(B^p)_{(p)}) \stackrel{(b)}{\simeq} \varphi_p(U(A^p_{\varrho_p})_{(p)})\stackrel{(c)}{\simeq}\varphi(U(A_{\varrho_p}))_{(p)} = (P_{\varrho_p})_{(p)}\,,$$
where (a) and (c) follow from the commutative diagram \eqref{eq:commutative-square} and  Lemma \ref{lem:Brauer2}(i), and $(b)$ from Lemma \ref{lem:Brauer2}(ii). Corollary \ref{cor:sumcorollary} implies then that $Q$ is a direct summand of $P_1\oplus \cdots \oplus P_n$. Using the fact that $\varphi$ is an equivalence of categories, we hence conclude that $U(B)$ is a direct summand of $U(A_1) \oplus \cdots \oplus U(A_n)$. Finally, as explained in item(ii), $U(B)$ is moreover indecomposable.

Now, let $M$ be an indecomposable direct summand of $U(A_1) \oplus \cdots \oplus U(A_n)$. Proceeding as in the proof of item (i), we conclude that $M\simeq U(B)$ with $B= \otimes_{p \in \cP}A^p_{\varrho_p}$. Clearly, for every $p \in \cP$ there exists an integer $\varrho_p \in \{1, \ldots, n\}$ such that $[B^p]=[A^p_{\varrho_p}]$. This concludes the proof.

\subsubsection*{Item (iv)}
We start by proving (a) implies (b). Clearly, condition (a) implies that 
\begin{equation}\label{eq:equivalence-p-last}
U(A_1)_{(p)}\oplus \cdots \oplus U(A_n)_{(p)}\simeq U(B_1)_{(p)} \oplus \cdots \oplus U(B_m)_{(p)}\end{equation}
for every $p \in \cP$. Consider the equivalence of categories
\begin{equation}\label{eq:equivalence-p}
\CSA(A_1, \ldots, A_n, B_1, \ldots, B_m)_{(p)} \stackrel{\varphi_p}{\simeq} \mathrm{Proj}(\Lambda(A_1, \ldots, A_n, B_1, \ldots, B_m)_{(p)})\,.
\end{equation}
Since the right-hand side of \eqref{eq:equivalence-p} is a Krull-Schmidt category, we hence conclude that $n=m$ and that there exists a permutation $\sigma_p$ (which depends on $p$) such that $U(B_i)_{(p)}\simeq U(A_{\sigma_p(i)})_{(p)}$ for every $1\leq i \leq n$. Thanks to Lemma \ref{lem:Brauer2}, the latter condition is equivalent to condition (b).

Let us now prove the converse implication. Thanks once again to Lemma \ref{lem:Brauer2}, condition (b) implies that $n=m$ and that the above isomorphism \eqref{eq:equivalence-p-last} holds for every $p \in \cP$. By applying Lemma \ref{lem:localiso} to the left and right-hand side of \eqref{eq:equivalence-p-last}, we hence obtain condition (a). This concludes the proof.

\subsection*{Proof of Proposition \ref{prop:groupcase}}
As explained in the proof of Theorem \ref{thm:comprehensive}(ii), the objects $U(A_i)$ are indecomposable. We now prove the converse. Let $M$ be an indecomposable object in $\CSA(A_1, \ldots, A_n)^{\oplus, \natural}$. Note that in the proof of Theorem \ref{thm:comprehensive}(i) we can assume without loss of generality that the Brauer classes $\{[A_1],\ldots, [A_n]\}$ of the suitable central simple $k$-algebras $A_1, \ldots, A_n$ form a subgroup of $\mathrm{Br}(k)$. Therefore, as in {\em loc. cit.}, we conclude that $M\simeq U(B)$ with $B=\otimes_{p \in \cP} A^p_{\varrho_p}$. Since the class $[A^p_{\varrho_p}]$ belongs to the cyclic subgroup of $\mathrm{Br}(k)$ generated by $[A_{\varrho_p}]$, $[B] \in \{[A_1], \ldots, [A_n]\}$. Hence, the proof follows now automatically from Equivalence \eqref{eq:Brauer}.

%---------------------------------------------------------------------------------------------------
\section{Proof of Proposition \ref{prop:graded-ring}}
%---------------------------------------------------------------------------------------------------
The objects $\{U(A_i)\}_{1\leq i \leq n}$ and $\{\Sigma(A_1, \ldots, A_n)([A_i])\}_{1\leq i \leq n}$ form a set of generators of the additive categories $\CSA(A_1, \ldots, A_n)^\oplus$ and $\mathrm{Proj}_{\mathrm{gr}}(\Sigma(A_1, \ldots, A_n))$, respectively. Since the functor $U(A_i) \mapsto \Sigma(A_1, \ldots, A_n)([A_i])$ is not only additive but also symmetric monoidal, it suffices to show that the induced homomorphisms
$$ \Hom(U(A_i),U(A_j)) \too \Hom(\Sigma(A_1, \ldots, A_n)([A_i]), \Sigma(A_1, \ldots, A_n)([A_j]))$$
are invertible. As explained above, the left-hand side identifies with $\mathrm{ind}(A_i^\op \otimes A_j) \cdot \bbZ$. In what concerns the right-hand side, it identifies with 
\begin{eqnarray*}
&  &  \Hom(\Sigma(A_1, \ldots, A_n)([k]), \Sigma(A_1, \ldots, A_n)([A_i^\op \otimes A_j])) \label{eq:identification}\\
&  \simeq & \Sigma(A_1, \ldots, A_n)([A_i^\op \otimes A_j])_{[k]} = \mathrm{ind}(A^\op_i \otimes A_j) \cdot \bbZ \nonumber
\end{eqnarray*}
because $\Sigma(A_1, \ldots, A_n)([A_i])$ is a strongly dualizable object of $\mathrm{Proj}_{\mathrm{gr}}(\Sigma(A_1, \ldots, A_n))$ with dual $\Sigma(A_1, \ldots, A_n)([A_i^\op])$. This completes the proof.
%---------------------------------------------------------------------------------------------------
\section{Proof of Theorem \ref{thm:computation}}
%---------------------------------------------------------------------------------------------------
Let us prove first the particular case where $\mathrm{per}(A)$ is a prime power $p^r$. Since $(1-t)(1+t + \cdots+ t^{p^r-1})=(1-t^{p^r})$, we need then to show that the assignment $t \mapsto U(A)$ gives rise to a ring isomorphism 
\begin{equation}\label{eq:homo-aux}
\bbZ[t]/\langle (1-t^{p^r})\rangle \stackrel{\sim}{\too} K_0(\CSA(k,A, A^{\otimes 2}, \ldots, A^{\otimes (p^r-1)})^\oplus)\,.
\end{equation}
Thanks to the cancellation property of Corollary \ref{cor:cancellation}, the elements of the right-hand side of \eqref{eq:homo-aux} are formal differences (not just equivalence classes)
\begin{eqnarray}\label{eq:element}
[\oplus_j U(A^{\otimes s_j})]-[\oplus_{j'}U(A^{\otimes s_{j'}})] && 0 \leq s_j, s_{j'} \leq p^r-1\,.
\end{eqnarray}
This implies that the above homomorphism \eqref{eq:homo-aux} is surjective. Now, recall that \eqref{eq:element} is trivial if and only if there exists an isomorphism 
\begin{equation}\label{eq:isom-aux}
\oplus_j U(A^{\otimes s_j})\simeq \oplus_{j'} U(A^{\otimes s_{j'}})\,.
\end{equation}
Since $\mathrm{per}(A)$ and $\mathrm{ind}(A)$ have the same prime
factors, $A^p=A$ and $A^q=k$ for every prime number $q \neq p$. Hence,
by combining Theorem \ref{thm:comprehensive}(iv) with the equivalence
\eqref{eq:Brauer}, we conclude that \eqref{eq:isom-aux} holds if and
only if $j=j'$ and there exists a permutation $\sigma$ such that
$U(A^{\otimes s_{j'}})\simeq U(A^{\otimes s_{\sigma (j)}})$ for every
$j$. This implies that the above homomorphism \eqref{eq:homo-aux} is
moreover injective, and therefore an isomorphism.

Let us now prove the general case where $\mathrm{per}(A)=\prod_i p_i^{r_i}$. As above, the assignment $t \mapsto U(A)$ gives rise to a surjective ring homomorphism 
\begin{equation}\label{eq:surjective}
\eta:\bbZ[t]/\langle (1-t^{\mathrm{per}(A)})\rangle \twoheadrightarrow K_0(\CSA(k,A,A^{\otimes 2}, \ldots, A^{\otimes (\mathrm{per}(A)-1)})^\oplus)\,.
\end{equation}
For every prime power $p_i^{r_i} \in \mathrm{per}(A)$ consider the following commutative diagram\footnote{Thanks to Theorem \ref{thm:comprehensive}(iv) that the right-hand side vertical homomorphism is well-defined.}:
$$
\xymatrix{
\bbZ[t]/\langle (1-t^{\mathrm{per}(A)})\rangle \ar[d]_-{t \mapsto t} \ar@{->>}[rr]^-\eta && K_0(\CSA(k,A,A^{\otimes 2}, \ldots, A^{\otimes (\mathrm{per}(A)-1)})^\oplus) \ar[d]^-{A \mapsto A^{p_i}} \\
\bbZ[t]/\langle (1-t^{p_i^{r_i}})\rangle \ar[rr]^-{\sim}_-{t \mapsto U(A^{p_i})} && K_0(\CSA(k,A^{p_i}, (A^{p_i})^{\otimes 2}, \ldots, (A^{p_i})^{\otimes(p_i^{r_i}-1)})^\oplus)\,.
}
$$
The lower horizontal homomorphism is an isomorphism (as proved above) and the kernel of the left-hand side vertical homomorphism is given by $\langle(1-t^{p_i^{r_i}}) \rangle$. The commutativity of the above square implies then that $\mathrm{Ker}(\eta) \subseteq \bigcap_i \langle  (1-t^{p_i^{r_i}}) \rangle$. Let us now prove the converse inclusion, or equivalently that the intersection of the kernels of the right-hand side vertical homomorphisms is trivial. Recall from above that the elements of the right-hand side of \eqref{eq:surjective} are formal differences
\begin{eqnarray}\label{eq:element2}
[\oplus_j U(A^{\otimes s_j})]-[\oplus_{j'}U(A^{\otimes s_{j'}})] && 0 \leq s_j, s_{j'} \leq \mathrm{per}(A)-1\,.
\end{eqnarray}
On one hand, \eqref{eq:element2} is trivial if and only if there
exists an isomorphism $\oplus_j U(A^{\otimes s_j}) \simeq \oplus_{j'}
U(A^{\otimes s_{j'}})$. On the other hand, as explained above,
\eqref{eq:element2} belongs to the kernel of the right-hand side
vertical morphisms if and only if $j=j'$ and there exists a
permutation $\sigma_{p_i}$ (which depends on $p_i$) such that
$U((A^{p_i})^{\otimes s_{j'}})\simeq U((A^{p_i})^{\otimes
  s_{\sigma_{p_i}(j)}})$ for every $j$. Thanks to Theorem
\ref{thm:comprehensive}(iv), these two conditions are equivalent. Hence,
$\mathrm{Ker}(\eta) = \bigcap_i \langle (1-t^{p_i^{r_i}})
\rangle$. Finally, making use of the equalities $1-t^{p_i^{r_i}} =
(1-t)(1+ t + t^2+ \cdots + t^{p_i^{r_i}-1})$, we conclude that
$$  \mathrm{Ker}(\eta)=\bigcap_i \langle  (1-t^{p_i^{r_i}}) \rangle = \langle(1-t) \prod_i (1 +t + t^2 + \cdots + t^{p_i^{r_i}-1})\rangle\,.$$
This concludes the proof.
%---------------------------------------------------------------------------------------------------
\section{Proof of Proposition \ref{prop:implication}}
%---------------------------------------------------------------------------------------------------
Making use of Theorem \ref{thm:comprehensive}(iv)(b) and the fact that every permutation $\sigma_p$ can be written as a composition of transpositions, it suffices to prove the following claim: given central simple $k$-algebras $D_1=\otimes_{q \in \cP} D_1^q$ and $D_2=\otimes_{q \in \cP} D_2^q$, the relations \eqref{eq:relations2} gives rise to an isomorphism between the following objects ($\cP':=\cP\backslash\{p\}$):
\begin{eqnarray}
E(D_1^p \otimes(\otimes_{q \in \cP'} D_1^q))\oplus E(D_2^p \otimes(\otimes_{q \in \cP'} D_2^q)) \label{eq:searched-1} \\
E(D_2^p \otimes(\otimes_{q \in \cP'} D_1^q))\oplus E(D_1^p \otimes(\otimes_{q \in \cP'} D_2^q)) \label{eq:searched-2} \,.
\end{eqnarray}
The relation \eqref{eq:relations2} applied to the central simple $k$-algebras $A:= (D_1^p)^\op \otimes D_2^p$, $B:=(\otimes_{q \in \cP'}D_1^q)\otimes (\otimes_{q \in \cP'}D_2^q)$ and $C:=D_1$, implies that \eqref{eq:searched-1}-\eqref{eq:searched-2} are isomorphic. This proves item (i). In what concerns item (ii), note that by tensoring \eqref{eq:searched-1}-\eqref{eq:searched-2} with the object $E(D_1^\op)$, we obtain:
\begin{eqnarray}
E(k) \oplus E((D_1^p)^\op\otimes D_2^p \otimes (\otimes_{q \in \cP'}D_1^q)^\op \otimes (\otimes_{q \in \cP'}D_2^q)) \label{eq:given-1} \\
E((D_1^p)^\op \otimes D_2^p) \oplus E((\otimes_{q \in \cP'}D_1^q)^\op\otimes (\otimes_{q \in \cP'}D_2^q))\,. \label{eq:given-2}
\end{eqnarray}
Hence, relation \eqref{eq:relations2} applied to the central simple $k$-algebras $A := (D_1^p)^\op \otimes D_2^p$, $B:=(\otimes_{q \in \cP'}D_1^q)^\op\otimes (\otimes_{q \in \cP'}D_2^q)$ and $C:=k$ implies that \eqref{eq:given-1}-\eqref{eq:given-2} are isomorphic. Finally, using the fact that the object $E(\cD_1^\op)$ is $\otimes$-invertible, we conclude that \eqref{eq:searched-1}-\eqref{eq:searched-2} are also isomorphic. This proves item (ii).
%---------------------------------------------------------------------------------------------------
\section{Proof of Theorem \ref{thm:Flag}}
%---------------------------------------------------------------------------------------------------
Recall from \cite[\S1]{MP} the construction of Merkurjev-Panin's motivic category $\underline{\cC}$ and of the symmetric monoidal functors $\Phi: \SmProj(k)^\op \to \underline{\cC}$ and $\Psi: \mathrm{sep}(k) \to \underline{\cC}$, where $\mathrm{sep}(k)$ stands for the category of separable $k$-algebras.
\begin{proposition}\label{prop:computation}
We have an isomorphism (see Notation \ref{not:notation2})
\begin{equation}\label{eq:iso-aux}
\Phi(\mathrm{Flag}(d_1, \ldots, d_m;A))\simeq {\mathrm{deg}(A) \choose d_1 \cdots d_m}_{\Psi(A)}\,.
\end{equation}
\end{proposition}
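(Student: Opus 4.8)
The plan is to reduce the statement to Merkurjev-Panin's motivic decomposition of twisted flag varieties and then to match the resulting combinatorics with the Gaussian polynomial. Set $N:=\mathrm{deg}(A)$, so that $\mathrm{Flag}(d_1,\ldots,d_m;A)$ is a projective variety homogeneous under the inner form $\mathrm{GL}_1(A)$ of $\mathrm{GL}_N$, with isotropy the parabolic $P$ of type $(d_1,\ldots,d_m)$. First I would base-change to $k_{\mathrm{sep}}$: there $\mathrm{Flag}(d_1,\ldots,d_m;A)_{k_{\mathrm{sep}}}$ becomes the split partial flag variety $(\mathrm{GL}_N/P)_{k_{\mathrm{sep}}}$, whose Bruhat decomposition exhibits it as a disjoint union of affine cells $C_w\simeq\bbA^{\ell(w)}$ indexed by the minimal-length representatives $w$ of the coset space $W^P:=S_N/(S_{d_1}\times\cdots\times S_{d_m})$. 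The classical identity $\sum_{w\in W^P}t^{\ell(w)}={\mathrm{deg}(A)\choose d_1\cdots d_m}_t$ then identifies the cell-counting generating function with the Gaussian polynomial, so that the proposition amounts to the assertion that, after Galois descent, the cell $C_w$ contributes the object $\Psi(A)^{\otimes\ell(w)}$ to $\Phi(\mathrm{Flag}(d_1,\ldots,d_m;A))$.

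Next I would invoke the motivic decomposition of Merkurjev-Panin \cite{MP} (the same source used for the construction of $\underline{\cC}$, $\Phi$ and $\Psi$; it specializes, for $P$ the maximal parabolic with $d_1=1$, to the Severi--Brauer case behind \eqref{eq:decomp-SB}): for a projective $k$-variety $X$ which becomes cellular over $k_{\mathrm{sep}}$ in a way compatible with the Galois action — as holds for $X=\mathrm{Flag}(d_1,\ldots,d_m;A)$ — one has in $\underline{\cC}$ a \emph{direct sum} decomposition $\Phi(X)\simeq\bigoplus_{w\in W^P}\Psi(A_w)$, where $A_w$ is the central simple ``generalized Tits algebra'' attached to the cell $C_w$. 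Two facts then finish the computation. First, for a flag variety of $\mathrm{GL}_1(A)$ the algebra $A_w$ is Brauer-equivalent to $A^{\otimes\ell(w)}$ (equivalently to the generalized exterior power $\lambda^{\ell(w)}(A)$, whose Brauer class is $\ell(w)\cdot[A]$), uniformly in $(d_1,\ldots,d_m)$; any ambiguity between $\ell(w)$ and $\dim X-\ell(w)$ is immaterial because the Gaussian polynomial is palindromic. Second, since $\Psi$ is Morita-invariant and symmetric monoidal, $\Psi(A_w)\simeq\Psi(A^{\otimes\ell(w)})\simeq\Psi(A)^{\otimes\ell(w)}$. Substituting into the Merkurjev-Panin decomposition and regrouping the summands according to the value of $\ell(w)$ yields
\[
\Phi(\mathrm{Flag}(d_1,\ldots,d_m;A))\ \simeq\ \bigoplus_{j\ge 0}\bigl(\Psi(A)^{\otimes j}\bigr)^{\oplus\,\#\{w\in W^P\,:\,\ell(w)=j\}}\ =\ {\mathrm{deg}(A)\choose d_1\cdots d_m}_{\Psi(A)}\,,
\]
which is precisely \eqref{eq:iso-aux}.

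I expect the main obstacle to be the simultaneous control of the two ingredients just used: one must know both that the cellular pieces assemble into an honest biproduct in $\underline{\cC}$ (no residual extensions) and that the Tits algebra of a length-$j$ Schubert cell of $\mathrm{GL}_1(A)/P$ has Brauer exponent exactly $j$, uniformly in the parabolic. A possible alternative, which I would fall back on if the off-the-shelf form of the Merkurjev-Panin theorem is not sufficiently explicit, is to argue by induction on $m$: the forgetful projection $\mathrm{Flag}(d_1,\ldots,d_m;A)\to\mathrm{Gr}(d_1;A)$ realizes the source as a relative twisted flag bundle over $\mathrm{Gr}(d_1;A)$ for the residual Azumaya algebra, and, granting a relative version of the Severi--Brauer/Grassmannian decomposition together with homotopy invariance in $\underline{\cC}$, the recursion matches the factorization ${N\choose d_1\cdots d_m}_t={N\choose d_1}_t\cdot{N-d_1\choose d_2\cdots d_m}_t$ of Gaussian polynomials; in that route the obstacle migrates to setting up the requisite relative decomposition inside $\underline{\cC}$.
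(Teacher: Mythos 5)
Your overall strategy coincides with the paper's: both reduce the statement to a decomposition of $\Phi(\mathrm{Flag}(d_1,\ldots,d_m;A))$ in Merkurjev--Panin's category $\underline{\cC}$ into summands of the form $\Psi(A)^{\otimes j}$, and then identify the multiplicities with the coefficients of the Gaussian polynomial (your identity $\sum_{w\in W^P}t^{\ell(w)}={\deg(A)\choose d_1\cdots d_m}_t$ is the same count as the paper's Poincar\'e series computation). However, there is a genuine gap at exactly the point you yourself flag as ``the main obstacle''. The decomposition you invoke as a black box --- ``for a projective $k$-variety which becomes cellular over $k_{\mathrm{sep}}$, $\Phi(X)\simeq\bigoplus_{w}\Psi(A_w)$ with $A_w$ the Tits algebra of the cell, and $A_w$ Brauer-equivalent to $A^{\otimes\ell(w)}$ uniformly in the parabolic'' --- is not a theorem of \cite{MP}: that reference concerns algebraic tori and toric varieties and contains no such general cellular decomposition, and in the stated generality (arbitrary varieties cellular over $k_{\mathrm{sep}}$) no such result is available. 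The actual input is Panin's theorem \cite{Panin} on twisted flag varieties, and it is not indexed by Schubert cells at all: it produces an isomorphism $\bigoplus_i\Psi(\beta_\gamma(|\theta_i|))\simeq\Phi({}_\gamma\cF)$ indexed by a homogeneous basis $\theta_1,\ldots,\theta_q$ of $R(\tilde P)$ over $R(\tilde H)$, with twists given by central characters, at the level of $\underline{\cC}$ (which is essential here, since the claim is stronger than the corresponding $K$-theory statement).

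Consequently, the substance of the proof is precisely what your proposal leaves unproved: one must (a) have the decomposition in $\underline{\cC}$, not merely in $K$-theory, and (b) identify the twists and their multiplicities. The paper does this by specializing Panin's result to $\tilde H=\mathrm{SL}_n$ with $n=\deg(A)$, computing $R(\tilde T)$, $R(\tilde P)$, $R(\tilde H)$ as quotients of symmetric-function rings, lifting the $\bbZ/n$-grading to the $\bbZ$-grading with $|t_i|=1$, using $A^{\otimes n}\sim k$ to get $\Psi(\beta_\gamma(|\theta_i|))=\Psi(A)^{\otimes|\tilde\theta_i|}$, and computing the Poincar\'e series $\sum_i t^{|\tilde\theta_i|}$ of a graded basis to be the Gaussian polynomial. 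Your assertion that the summand attached to a length-$j$ cell is $\Psi(A)^{\otimes j}$ is true for inner forms of type $A_{n-1}$ (it is consistent with the Severi--Brauer and Grassmannian cases), but as written it is asserted rather than derived from any cited result, so the proposal does not yet constitute a proof; filling it in would amount to redoing the representation-ring (or an equivalent Schubert-calculus) computation. Your fallback route via relative twisted Grassmannian bundles would likewise require establishing the relevant projective/Grassmannian bundle formulas in $\underline{\cC}$, which is again nontrivial and not contained in the sources you cite.
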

\begin{proof}
Recall that $G:=\operatorname{Gal}(k_{\text{sep}}/k)$. We start by recalling some results from Panin's work \cite{Panin}.  Let $\tilde{H}$ be a split
semi-simple simply connected algebraic group (defined over $k$) and $\tilde{T}\subset\tilde{P}\subset \tilde{H}$ a split maximal torus and a parabolic subgroup. The corresponding representation rings are denoted by $R(\tilde{T}),
R(\tilde{P})$, and $R(\tilde{H})$, respectively. Let
  $\tilde{Z}$ be the center
of $\tilde{H}$ and $H:=\tilde{H}/\tilde{Z}$ the corresponding adjoint
group. We write $\Ch=\Hom(\tilde{Z},\bbG_m)$ for the character group. The representation rings
introduced above are canonically $\Ch$-graded. A representation $V$
has degree $\cX:\tilde{Z}\r \bbG_m$ if $\tilde{Z}$ acts on $V$
through the character $\cX$. \def\Gal{\operatorname{Gal}}Fix an element $\gamma\in
H^1(G,H(k_{\text{sep}}))$. It gives rise to a canonical group
homomorphism $\beta_\gamma:\Ch\r \Br(k)$
which sends $\cX:\tilde{Z}\r \bbG_m$ to the image of $\gamma$ under the composed homomorphism 
$$
H^1(G,H(k_{\text{sep}}))\xrightarrow{\partial}  H^2(G,\tilde{\cZ}(k_{\text{sep}}))\xrightarrow{\cX}H^2(G,\bbG_m(k_{\text{sep}}))
=\Br(k)\,.
$$
Let $\cF:=\tilde{H}/\tilde{P}$. 
 Since the $\tilde{H}$-action
on $\cF$ factors through $H$ we have a corresponding twisted
homogeneous space ${}_\gamma\cF$. 
For any $\Ch$-homogeneous basis $\theta_1,\ldots,\theta_q$ of $R(\tilde{P})$ over $R(\tilde{H})$,
Panin constructs an isomorphism in the category $\underline{\cC}$
\begin{equation}
\label{eq:panin}
\bigoplus_{i=1}^q \Psi(\beta_{\gamma}(|\theta_i|))\simeq \Phi({}_\gamma\cF)\,,
\end{equation}
where $|\theta_i|\in \Ch$ is the degree of $\theta_i$. This
construction follows from the combination of \cite[Thm.~6.7]{Panin} with the functor $F_\gamma$
introduced in the proof of \cite[Lem.~6.5]{Panin}.

\medskip
\def\Gl{\operatorname{Gl}}
\def\Sl{\operatorname{Sl}}
We now specialize\footnote{It would be more convenient to use $\tilde{H}=\Gl_n$
but this is not a semi-simple group so it does not fall literally 
under Panin's setting \cite{Panin}. 
} the above constructions to $\tilde{H}=\Sl_n$, where
$n:=\mathrm{deg}(A)$.  In this case, $\tilde{\cZ}$ consists of
the constant diagonal matrices with entries the $n$-roots of unity. Hence, $\Ch$ has a canonical generator given by the
inclusion of $\tilde{\cZ}$ inside the diagonal matrices of
$\Gl_n$. This implies that canonically $\Ch=\bbZ/n\bbZ$. Let
$\tilde{P}\subset \tilde{H}$ be the standard parabolic subgroup of
elements preserving the flag $(0\subset k^{d_1}\subset
k^{d_1+d_2}\subset\cdots\subset k^{d_1+\cdots+d_n}=k^n)$.  In this
case, we have ${}_\gamma\cF=\operatorname{Flag}(d_1,\ldots,d_m;A)$,
where $[A]=\beta_\gamma(\bar{1})$.  Let $\tilde{T}\subset \tilde{P}$
be the maximal torus of diagonal matrices. If we denote by $t_i$ the
$i^{\mathrm{th}}$ diagonal entry, then $t_i$ is an element of
$R(\tilde{T})=\Hom(\tilde{T},\bbG_m)$.  Under the above choices, the
associated Weyl groups $\cW_{\tilde{P}}$ and $\cW_{\tilde{H}}$ of
$\tilde{P}$ and $\tilde{H}$ are given, respectively, by $S_{d_1}\times
\cdots\times S_{d_m}$ and $S_n$. Therefore, we have the equalities
\begin{eqnarray*}
R(\tilde{T})&=&\tilde{R}(\tilde{T})/(t_1\ldots t_n-1)\\ 
R(\tilde{P})&=&R(\tilde{P})^{\cW_{\tilde{P}}}=\tilde{R}(\tilde{P})/(\sigma^{(1)}_{d_1}\sigma^{(2)}_{d_2}\ldots \sigma^{(m)}_{d_m}-1)\\
R(\tilde{H})&=&R(\tilde{H})^{\cW_{\tilde{H}}}=\tilde{R}(\tilde{H})/(\sigma_{n}-1)\,,
\end{eqnarray*}
where $\tilde{R}(\tilde{T})$, $\tilde{R}(\tilde{P})$, and $\tilde{R}(\tilde{H})$, are given respectively by 
\begin{eqnarray*}
\bbZ[t_1,\ldots,t_n]  & \bbZ[\sigma_1^{(1)},\ldots,\sigma_{d_1}^{(1)},\ldots \sigma_1^{(m)},\ldots,
\sigma_{d_m}^{(m)}] & \bbZ[\sigma_1,\ldots,\sigma_n]
\end{eqnarray*}
for the appropriate symmetric functions $\sigma^{(j)}_i$ and $\sigma_k$ in $t_1, \ldots, t_n$. The $\Ch$-grading on the above is obtained from the $\bbZ$-grading on $\tilde{R}(\tilde{T})$ given by $|t_i|=1$. 

Now, fix a $\bbZ$-graded basis $\tilde{\theta}_1,\ldots,\tilde{\theta}_q$ of 
$\tilde{R}(\tilde{P})$ over $\tilde{R}(\tilde{H})$. Tensoring $-\otimes_{\tilde{R}(\tilde{T})} R(\tilde{T})$
yields a $\Ch$-graded basis $\theta_1,\ldots,\theta_q$ of $R(\tilde{P})$ over $R(\tilde{H})$. Since $A^{\otimes n}$ is Morita equivalent to $k$ we hence obtain the following equalities:
\begin{equation}\label{eq:equalities}
\Psi(\beta_\gamma(|\theta_i|))=\Psi(\beta_\gamma(\bar{1})^{|\tilde{\theta}_i|})=\Psi([A]^{|\tilde{\theta}_i|})=\Psi(A)^{\otimes |\tilde{\theta_i}|}\,.
\end{equation}
Finally, by combining \eqref{eq:equalities} into \eqref{eq:panin} and with the Poincar{\'e}
series computation
\begin{equation*}
\label{eq:gaussian}
\sum_i t^{|\tilde{\theta}_i|}=\left( {n\atop d_1\cdots d_m}\right)_t\,,
\end{equation*}
we obtain the desired isomorphism \eqref{eq:iso-aux}. This concludes the proof.
\end{proof}
As proved in \cite[Thm.~6.10]{twisted}, there exists an additive fully-faithful symmetric monoidal functor $\Theta: \underline{\cC} \to \NChow(k) \subset \Hmo_0(k)$ making the diagrams commute:
$$
\xymatrix{
\SmProj(k)^\op\ar[d]_-{\Phi} \ar[rr]^-{X \mapsto \perf_\dg(X)} &&  \dgcat(k) \ar[d]^-U & \mathrm{sep}(k) \ar[d]_-\Psi \ar[rr]^-{A\mapsto A} && \dgcat(k) \ar[d]^-U \\
\underline{\cC} \ar[rr]_-{\Theta} && \Hmo_0(k) & \underline{\cC} \ar[rr]_-{\Theta}  && \Hmo_0(k)\,.
}
$$
As a consequence, the searched motivic decomposition \eqref{eq:motivic-decomp-flag} is obtained by applying the functor $\Theta$ to the isomorphism \eqref{eq:iso-aux}. This achieves the proof.

\appendix

%---------------------------------------------------------------------------------------------------
\section{Cyclic sieving phenomenon}\label{app:combinatorial}
%---------------------------------------------------------------------------------------------------
Let $C$ be a cyclic group of order $n$ acting on a finite set $X$ and $p(t) \in \bbZ[t]$ a polynomial with integer coefficients. Recall from \cite{Reiner} that the triple $(X,p(t), C)$ {\em exhibits the cyclic sieving phenomenon} if $|X^c|=p(\omega)$ for every $c \in C$ and root of unit $\omega$ having the same order as $c$. Intuitively speaking, the polynomial $p(t)$ works as a generating function for the set $X$.
\begin{example}\label{ex:cyclic}
Given non-negative integers $m \leq n$, let $X$ be the set of all $m$-element subsets of $\{1,2, \ldots, n\}$ and $p(t)$ the {\em Gaussian polynomial}
$$ {n \choose m}_t := \frac{(1-t^n)(1-t^{n-1})\cdots(1-t^{n-m+1})}{(1-t)(1-t^2)\cdots (1-t^m)} \in \bbN[t]\,.$$
If $c\in C$ acts on $X$ by cycling the elements of a $m$-subset modulo $n$, then the triple $(X,p(t),C)$ exhibits the cyclic sieving phenomenon; see \cite[Thm.~1.1(b)]{Reiner}. 
\end{example}
\begin{lemma}
Let $(X,p(t),C)$ be a triple exhibiting the cyclic sieving phenomenon. Given a divisor $l|n$ and a coprime integer $(i,l)=1$, we have the congruence relation 
\begin{equation}\label{eq:modulo}
p(t^i) \cong p(t) \,\,\mathrm{modulo}\,\,(t^l-1)\,.
\end{equation}
\end{lemma}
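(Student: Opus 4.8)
The plan is to reduce the polynomial congruence to a pointwise identity at roots of unity. Two polynomials $f,g\in\bbZ[t]$ satisfy $f\equiv g\pmod{t^l-1}$ if and only if $f(\zeta)=g(\zeta)$ for every $l$-th root of unity $\zeta$: indeed $t^l-1=\prod_{\zeta^l=1}(t-\zeta)$ is monic with distinct roots, so it divides $f-g$ in $\bbC[t]$ as soon as $f-g$ vanishes at all the $\zeta$, and since the divisor is monic the Euclidean division keeps the quotient in $\bbZ[t]$. Hence it suffices to prove that $p(\zeta^i)=p(\zeta)$ for every $l$-th root of unity $\zeta$.

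First I would fix such a $\zeta$ and let $d$ be its multiplicative order, so that $d\mid l$, and, since by hypothesis $l\mid n$, also $d\mid n$. As $C$ is cyclic of order $n$, it contains an element $c$ of order exactly $d$. Now $\zeta$ is a root of unity having the same order as $c$, so the cyclic sieving hypothesis yields $|X^c|=p(\zeta)$. On the other hand, from $(i,l)=1$ and $d\mid l$ we get $(i,d)=1$, hence $\zeta^i$ is again a primitive $d$-th root of unity; applying the cyclic sieving hypothesis to the \emph{same} element $c$ gives $|X^c|=p(\zeta^i)$. Comparing, $p(\zeta)=|X^c|=p(\zeta^i)$, as desired. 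Letting $\zeta$ range over all $l$-th roots of unity and invoking the reduction of the first paragraph then completes the proof.

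I do not expect a genuine obstacle here; the argument is short. The only two points that need a word of care are: (a) that $\zeta^i$ really has order $d$ — this is exactly where the coprimality of $i$ and $l$ (equivalently, of $i$ and $d$) enters, and the statement would be false without it; and (b) that the congruence, first obtained over $\bbC$, descends to $\bbZ[t]$, which holds because the modulus $t^l-1$ is monic. Both are routine remarks rather than real difficulties.
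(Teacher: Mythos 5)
Your argument is correct and is essentially the paper's proof: reduce the congruence modulo $t^l-1$ to the pointwise equality $p(\omega^i)=p(\omega)$ at $l$-th roots of unity, then use the coprimality $(i,l)=1$ to see that $\omega$ and $\omega^i$ have the same order and compare both values to $|X^c|$ for an element $c$ of that order. Your extra remarks (existence of $c$ since the order divides $n$, and descent of the divisibility from $\bbC[t]$ to $\bbZ[t]$ because $t^l-1$ is monic) just make explicit what the paper leaves implicit.
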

\begin{proof}
It suffices to show that $p(\omega^i)=p(\omega)$ for every $\omega$ such that $\omega^l=1$. Since by hypothesis $(i,l)=1$, the roots of unit $\omega$ and $\omega^i$ have the same order. Consequently, $p(\omega^i)=|X^c|=p(\omega)$ for some $c \in C$. This achieves the proof.
\end{proof}
\begin{corollary}\label{cor:sieving}
Given non-negative integers $m \leq n$, an additive symmetric monoidal category $(\cC,\otimes, {\bf 1})$, and an object $b \in \cC$ such that $b^{\otimes l}\simeq {\bf 1}$, we have an induced isomorphism ${n \choose m}_{b^{\otimes i}} \simeq {n \choose m}_b$; see Notation \ref{not:notation2}.
\end{corollary}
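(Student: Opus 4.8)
The plan is to deduce the isomorphism from the congruence \eqref{eq:modulo} of the preceding Lemma, applied to the cyclic sieving phenomenon of Example \ref{ex:cyclic}. Taking $X$ to be the set of $m$-element subsets of $\{1,\dots,n\}$ and $C$ the cyclic group of order $n$ acting by cycling, the triple $\bigl(X,{n \choose m}_t,C\bigr)$ exhibits the cyclic sieving phenomenon; hence, for a divisor $l\mid n$ and an integer $i$ with $\gcd(i,l)=1$ (the hypotheses under which the Corollary is to be read), the Lemma yields
\[
{n \choose m}_{t^i}\;\equiv\;{n \choose m}_{t}\pmod{t^l-1}\,.
\]

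The remaining point is a purely formal translation of such a congruence into an isomorphism in $\cC$, using only $b^{\otimes l}\simeq\mathbf 1$. For a polynomial $p(t)=\sum_{j\ge 0}a_j t^j\in\bbN[t]$, I would introduce its reduction $\bar p(t)\in\bbN[t]$, obtained by replacing each monomial $t^j$ with $t^{\,j\bmod l}$ and collecting terms; this is again a polynomial with non-negative coefficients, now of degree $<l$, and $\bar p\equiv p\pmod{t^l-1}$. Since $b^{\otimes l}\simeq\mathbf 1$ gives $b^{\otimes j}\simeq b^{\otimes(j\bmod l)}$ for every $j$, regrouping the direct sum defining $p(b)$ (see Notation \ref{not:notation2}) by residues modulo $l$ shows $p(b)\simeq\bar p(b)$. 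The decisive elementary fact is that two polynomials of degree $<l$ which are congruent modulo $t^l-1$ must be \emph{equal}, since their difference is a multiple of $t^l-1$ of degree $<l$, hence zero.

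Combining these, I would apply the reduction to $p(t)={n \choose m}_{t^i}$ and $p'(t)={n \choose m}_{t}$ and invoke the displayed congruence to get $\bar p=\bar p'$ as elements of $\bbN[t]$, so $\bar p(b)=\bar p'(b)$ as objects of $\cC$, whence
\[
{n \choose m}_{b^{\otimes i}}\;\simeq\;\bar p(b)\;=\;\bar p'(b)\;\simeq\;{n \choose m}_{b}\,,
\]
as claimed.

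The one subtlety, and the reason the reduction step is needed, is that one cannot argue directly that ``${n \choose m}_{t^i}-{n \choose m}_{t}$ is divisible by $t^l-1$, hence the two objects differ by copies of $b^{\otimes l}\ominus\mathbf 1$'': the category $\cC$ has no subtraction, and the difference of two $\bbN[t]$-polynomials need not lie in $\bbN[t]$. Passing to the degree-$<l$ representatives moves the comparison to the level of honest elements of $\bbN[t]$, after which the conclusion is immediate. Nothing beyond the preceding Lemma, Example \ref{ex:cyclic}, and Notation \ref{not:notation2} is required.
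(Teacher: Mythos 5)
Your proof is correct and follows essentially the same route as the paper: the paper likewise combines the congruence \eqref{eq:modulo} with the evaluation map $\bbN[t]/(t^l=1) \to \mathrm{Iso}(\cC)$, $p(t)\mapsto p(b)$, which is well defined because $b^{\otimes l}\simeq {\bf 1}$. Your reduction to degree-$<l$ representatives is simply an explicit unpacking of why that evaluation map is well defined and why the $\bbZ[t]$-congruence forces equality in $\bbN[t]/(t^l=1)$, a point the paper leaves implicit.
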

\begin{proof}
Since $b^{\otimes l}\simeq {\bf 1}$, we have the following evaluation map
\begin{eqnarray*}
\bbN[t]/(t^l=1) \too \mathrm{Iso}(\cC) && p(t) \mapsto p(b)\,.
\end{eqnarray*}
Consequently, the proof follows from the above congruence \eqref{eq:modulo}.
\end{proof}
%---------------------------------------------------------------------------------------------------
\section{Noncommutative motives of dg Azumaya algebras}\label{app:dgAzumaya}
%---------------------------------------------------------------------------------------------------
In this appendix we assume that $k$ is a commutative ring; let $s$ be the number of components of the associated affine $k$-scheme $\mathrm{Spec}(k)$. Recall from \cite{ICM-Keller,Buenos,Additive} that all constructions and results of \S\ref{sec:dg}-\ref{sec:NCmotives} hold also in this generality; simply replace the tensor product by its derived version $-\otimes^{\bf L}-$. 
\subsection*{DG Azumaya algebras}
A dg $k$-algebra $A$ is called a {\em dg Azumaya algebra} if:
\begin{itemize}
\item[(i)] The underlying complex of $k$-modules is a compact generator of $\cD(k)$.
\item[(ii)] The canonical morphism $A^\op \otimes^{\bf L} A \to {\bf R}\Hom(A,A)$ in $\cD(k)$ is an isomorphism.
\end{itemize}
\begin{example}\label{ex:Azumaya}
\begin{itemize}
\item[(i)] The ordinary Azumaya algebras (see \cite{Grothendieck}) are the dg Azumaya algebras whose underlying complex is $k$-flat and concentrated in degree zero.
\item[(ii)] When $k$ is a field, every dg Azumaya algebra is isomorphic in the homotopy category $\Hmo(k)$ to an ordinary Azumaya algebra; see \cite[Prop.~2.12]{Toen}.
\item[(iii)] For every non-torsion {\'e}tale cohomology class $\alpha \in H^2_{\mathrm{et}}(\mathrm{Spec}(k),\bbG_m)$ there exists a dg Azumaya algebra $A_\alpha$ (representing this class $\alpha$) which is {\em not} isomorphic in $\Hmo(k)$ to an ordinary Azumaya algebra; see \cite[page~584]{Toen}. Unfortunately, the construction of $A_\alpha$ is not explicit. 
\end{itemize}
\end{example}
The {\em derived Brauer group $\dBr(k)$ of $k$} is the set of isomorphism classes of dg Azumaya algebras in $\Hmo(k)$. The group structure is induced by the derived tensor product. 
B.~To{\"e}n constructed in \cite[Cor.~3.8]{Toen} an injective map
\begin{equation}
\label{eq:toen}
\psi:\dBr(k)\too H^1_{\operatorname{et}}(\mathrm{Spec}(k),\bbZ) \times H^2_{\operatorname{et}}(\mathrm{Spec}(k),\bbG_m) 
\end{equation}
We will now describe this map in down-to-earth terms, avoiding the language of derived stacks. For notational reasons, we will follow the geometric setting.

Let $X=\mathrm{Spec}(k)$, $\cL(X)$ the set of locally constant functions $X \to \bbZ$, and $\DPic(X)$ the derived Picard group of $X$ (which we consider as a $2$-group). As proved by Rouquier-Zimmermann in \cite[\S3]{Rouquier}, we have the following equivalence
\begin{eqnarray}
\label{eq:dpiciso}
\cL(X)\times \Pic(X)\stackrel{\sim}{\too} \DPic(X) && (\underline{n},L)\mapsto L[\underline{n}]\,,
\end{eqnarray}
where $L[\underline{n}]$ is such that $L[\underline{n}]_{|U}:=L_{|U}[n_U]$ for every connected component $U \subset X$. 

Now, let $A$ be a dg Azumaya algebra over $X$. Following \cite[Prop.\ 1.14]{Toen}, there exists an {\'e}tale cover $f:Y \to X$ such that $f^\ast(A)\simeq \REnd_Y(P)$ for some perfect complex on $Y$. Let us write $(Y/X)_\bullet$ for the associated hypercovering with $Y_n:=(Y/X)_n:=Y^{\times_X^n}$. Under these notations, we have the following isomorphisms:
\begin{equation}\label{eq:isom-B}
\REnd_{Y_2}(\pr^\ast_1(P))\simeq \pr^\ast_1\REnd_Y(P)\simeq\pr_2^\ast\REnd_Y(P)\simeq \REnd_{Y_2}(\pr^\ast_2(P))\,.
\end{equation}
In what follows, we will use the notations $(-)_{i\cdots j}:=\pr_{i\cdots j}^\ast(-)$. Thanks to Morita theory, \eqref{eq:isom-B} is induced from an isomorphism 
\begin{eqnarray}
\label{eq:startiso}
\gamma:L\otimes^{\bf L}_{Y_2} P_1\stackrel{\sim}{\too} P_2 && L\in \DPic(Y)\,,
\end{eqnarray}
which is unique up to multiplication with an element of $\cO_{Y_2}^\ast$. By pulling-back $\gamma$ to $Y_3$, in three different ways, we obtain once again by Morita theory an isomorphism $\phi:L_{23}\otimes^{\bf L}_{Y_3} L_{12}\stackrel{\sim}{\to} L_{13}$ in $\DPic(Y)$ making the following diagram commute:
\begin{equation}
\label{eq:foundation}
\xymatrix@C=3em@R=2em{
L_{23}\otimes^{\bf L}_{Y_3} L_{12}\otimes^{\bf L}_{Y_3} P_1\ar[d]_{\phi\otimes\Id} \ar[rr]^-{\Id\otimes \gamma_{12}}&&  L_{23}\otimes^{\bf L}_{Y_3} P_2\ar[d]^{\gamma_{23}}\\
L_{13} \otimes^{\bf L}_{Y_3} P_1\ar[rr]_{\gamma_{13}}&& P_3\,.
}
\end{equation}
The morphism $\phi$ satisfies the standard cocycle condition when pulled-back to $Y_4$. Moreover, $\phi$ is well-defined up to an obvious type of coboundary. Let $\bar{\phi}$ be the corresponding
equivalence class. We call $(Y,L,\bar{\phi})$ a set of \emph{Picard data} on $Y$. 

Thanks to the above isomorphism \eqref{eq:dpiciso}, we have $L=L'[\underline{n}]$ with $L'\in \Pic(Y)$. Moreover, the isomorphism $\phi$ implies that $\underline{n}_{12}+
\underline{n}_{23}=\underline{n}_{13}$ on $Y_2$. Hence, $\underline{n}$
defines an element of $\check{H}^1((Y/X)_\bullet,\bbZ)$. Now, let $g:Z\r Y_2$ be an {\'e}tale cover\footnote{Of course, the ``\'etale'' cover can be taken to be a Zariski cover.} such that $g^\ast(L')\simeq \cO_Z$ in $\Pic(Z)$. In what follows, we fix such a trivialization. Let $S_{1,\bullet}$ be the
truncated hypercovering $Z\mathbin{\vcenter{\vbox{\hbox{$\r$}\nointerlineskip\hbox{$\r$}}}}Y\mathbin{\r}
X$, $S_\bullet$ the coskeleton $\cosk S_{1,\bullet}$, and $g:S_\bullet\r (Y/X)_\bullet$ the
induced map of hypercoverings. Using the chosen trivialization, the following morphism (deduced from $\phi$)
\[
g^\ast(\phi):g^\ast(L_{23}')\otimes^{\bf L}_{Z}  g^\ast(L_{12}')
\too g^\ast(L_{13}')
\]
is given by multiplication with an element $\tilde{\phi}\in \Gamma(Z,\cO^\ast_Z)$.  Since $\tilde{\phi}$ still satisfies the cocycle
condition, it defines an element in $\check{H}^2(S^\bullet,\bbG_m)$; 
 one checks that this element does not depend on the chosen trivialization neither on the isomorphism $\gamma$.
 
 The map \eqref{eq:toen} can now be explicitly described as the image of $(\underline{n},\tilde{\phi})$
under
\[
 \check{H}^1((Y/X)_\bullet,\bbZ)\times \check{H}^2(S_\bullet,\bbG_m)\too H^1_{\text{et}}(X,\bbZ)\times {H}_{\text{et}}^2(X,\bbG_m)\,.
\]
\begin{lemma} 
\label{lem:bclass} Let $A$ be a dg Azumaya algebra over $X$ and $(Y,L,\bar{\phi})$ a set of Picard data for $A$. Then, 
$A$ is trivial if and only if $(Y,L,\bar{\phi})$ satisfies the following
condition: there exists an element $K\in \DPic(Y)$ and an isomorphism $\theta:L\stackrel{\sim}{\to} K_2\otimes^{\bf L}_{Y_2} K_1^{-1}$ making the following diagram commute:
\begin{equation}
\label{eq:cond2}
\xymatrix{
L_{23}\otimes^{\bf L}_{Y_3} L_{12}\ar[d]_{\theta_{23}\otimes \theta_{12}}\ar[rr]^{\phi} && L_{13}\ar[d]^{\theta_{13}}\\
(K_3\otimes^{\bf L}_{Y_3} K_2^{-1})\otimes^{\bf L}_{Y_3} (K_2\otimes^{\bf L}_{Y_3} K_1^{-1})
\ar[rr]_-{\mathrm{canonical}} && K_3\otimes^{\bf L}_{Y_3} K_1^{-1}\,.
}
\end{equation}
\end{lemma}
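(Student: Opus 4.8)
The plan is to recognize that the perfect complex $P$ on $Y$ with $\REnd_Y(P)\simeq f^\ast(A)$, together with the gluing isomorphism $\gamma$ of \eqref{eq:startiso} over $Y_2$, is a \emph{twisted} descent datum for $A$ along $f$, the twist being precisely the pair $(L,\bar\phi)$; $P$ will descend to an honest perfect complex on $X$ — which forces $A$ to be trivial — exactly when $P$ can be untwisted by a derived line bundle $K\in\DPic(Y)$, and the obstruction to untwisting is visibly the statement that $(L,\bar\phi)$ is the coboundary of $K$, i.e.\ condition \eqref{eq:cond2}. Here ``$A$ trivial'' is taken to mean $A$ Morita equivalent to $\cO_X$ in $\Hmo(X)$, equivalently $A\simeq\REnd_X(M)$ for some perfect complex $M$ on $X$ which is a compact generator of $\cD(X)$. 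Two remarks will be used freely: for $K\in\DPic(Y)$ one has a canonical identification $\REnd_Y(K\otimes^{\bf L}_Y E)\simeq\REnd_Y(E)$, and the passage from such algebra identifications to the corresponding Morita bimodules is exactly what turns \eqref{eq:isom-B} into $\gamma$.

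\textbf{Triviality implies \eqref{eq:cond2}.} Assume $A\simeq\REnd_X(M)$. Pulling back along $f$ and composing with the fixed isomorphism $f^\ast(A)\simeq\REnd_Y(P)$ gives $\REnd_Y(P)\simeq\REnd_Y(f^\ast M)$, whence $P\simeq K\otimes^{\bf L}_Y f^\ast(M)$ for some $K\in\DPic(Y)$ by Morita theory. Since $f\circ\pr_1=f\circ\pr_2\colon Y_2\to X$, writing $N$ for the common pullback of $M$ to $Y_2$ one obtains $P_i\simeq K_i\otimes^{\bf L}_{Y_2}N$, under which the canonical identification \eqref{eq:isom-B} becomes the tautological one $\REnd_{Y_2}(K_1\otimes^{\bf L}N)\simeq\REnd_{Y_2}(N)\simeq\REnd_{Y_2}(K_2\otimes^{\bf L}N)$. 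Transporting back through Morita theory one checks that, up to the $\cO_{Y_2}^\ast$-indeterminacy of $\gamma$ and the coboundary indeterminacy of $\phi$ (which affect neither $[L]$ nor the class $\bar\phi$, hence not the validity of \eqref{eq:cond2}), one may take $L=K_2\otimes^{\bf L}_{Y_2}K_1^{-1}$ with $\gamma$ the canonical isomorphism; then $\phi$ is forced by \eqref{eq:foundation} to be the canonical isomorphism $(K_3\otimes^{\bf L}K_2^{-1})\otimes^{\bf L}(K_2\otimes^{\bf L}K_1^{-1})\to K_3\otimes^{\bf L}K_1^{-1}$, and taking $\theta=\id$ makes \eqref{eq:cond2} commute.

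\textbf{Condition \eqref{eq:cond2} implies triviality.} Conversely, suppose $K\in\DPic(Y)$ and $\theta\colon L\isoto K_2\otimes^{\bf L}_{Y_2}K_1^{-1}$ satisfy \eqref{eq:cond2}. Set $Q:=K^{-1}\otimes^{\bf L}_Y P$, so that $\REnd_Y(Q)\simeq\REnd_Y(P)\simeq f^\ast(A)$. Combining $\gamma$ with $\theta$ yields an isomorphism $\delta\colon\pr_1^\ast Q\isoto\pr_2^\ast Q$ over $Y_2$, concretely $K_1^{-1}\otimes^{\bf L}P_1\isoto K_2^{-1}\otimes^{\bf L}L\otimes^{\bf L}P_1\xrightarrow{\id\otimes\gamma}K_2^{-1}\otimes^{\bf L}P_2$ with the first arrow induced by $\theta^{-1}$. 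The commutativity of \eqref{eq:cond2}, together with the defining square \eqref{eq:foundation} for $\phi$ and the cocycle identity that $\phi$ satisfies over $Y_4$, is exactly what a diagram chase over $Y_3$ needs in order to conclude that $\delta$ satisfies the cocycle condition; hence $\delta$ is an effective descent datum for $Q$ along the fppf cover $f$. Since the algebra identification \eqref{eq:isom-B} is the identity on the pullback of $A$, the isomorphism $\gamma$ — and therefore $\delta$ — is $f^\ast(A)$-linear, and $A$ carries the canonical descent datum coming from $X$; thus $\delta$ descends $Q$, as an $f^\ast(A)$-module, to an $A$-module $M$ on $X$ with $f^\ast(M)\simeq Q$ compatibly with the actions. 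Applying $f^\ast$ to the action map $A\to\REnd_X(M)$ recovers the isomorphism $f^\ast(A)\simeq\REnd_Y(Q)$, so that map is itself an isomorphism by faithfully flat descent, and $A\simeq\REnd_X(M)$ is trivial.

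\textbf{Main obstacle.} The only non-formal point is the $Y_3$ diagram chase in the last paragraph: checking that the untwisted gluing datum $\delta$ obeys the cocycle condition requires carefully keeping track of all the pullbacks along the face maps of $(Y/X)_\bullet$ and inserting, at the right spots, the square \eqref{eq:foundation}, the hypothesis \eqref{eq:cond2}, and the $Y_4$-cocycle relation for $\phi$. Everything else is Morita theory and faithfully flat descent of perfect complexes, both of which hold in the present generality.
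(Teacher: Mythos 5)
Your implication ``$A$ trivial $\Rightarrow$ \eqref{eq:cond2}'' is essentially the paper's own argument: pull back $A\simeq\REnd_X(M)$ along $f$, use Morita theory to write $P\simeq K\otimes^{\bf L}_Y f^\ast(M)$, and read off the relation between $L$ and $K$ together with the compatibility of $\phi$. The only (cosmetic) difference is that you replace $(L,\phi)$ by an isomorphic/cobounded representative and take $\theta=\id$, whereas the lemma asks for a $\theta$ for the given data; the paper produces it directly from the isomorphism $K_1\otimes^{\bf L}_{Y_2}P_1\simeq K_2\otimes^{\bf L}_{Y_2}P_2$ via Morita theory.

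The genuine gap is in the converse direction. You untwist $P$ to $Q:=K^{-1}\otimes^{\bf L}_Y P$, build a gluing isomorphism $\delta:\pr_1^\ast Q\isoto\pr_2^\ast Q$, assert (without carrying out the $Y_3$ chase you yourself flag as the main obstacle) that it satisfies the cocycle identity, and then invoke ``faithfully flat descent of perfect complexes'' to descend $Q$ with its $f^\ast(A)$-action. But perfect complexes do \emph{not} satisfy effective descent in this naive form: the assignment $U\mapsto \cD_c(U)$ (or the derived category of modules over the pullback of $A$) is not a stack, and a gluing isomorphism obeying the cocycle identity only up to homotopy over $Y_3$ need not be effective. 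One needs a homotopy-coherent descent datum in the dg/$\infty$-categorical enhancement, or vanishing of negative self-Exts of $Q$; the latter is unavailable here because $\REnd_Y(Q)\simeq f^\ast(A)$ and a dg Azumaya algebra may have nonzero negative cohomology --- exactly the situation of Example \ref{ex:Azumaya}(iii), which is the interesting case for the paper. This coherence problem is precisely the nontrivial content packaged in To\"en's theory, and it is why the paper's proof of this direction descends nothing: it observes that under \eqref{eq:cond2} the representing pair $(\underline{n},\tilde{\phi})$ of $\psi(A)$ becomes a coboundary, hence $\psi(A)$ is trivial, and then concludes by the injectivity of \eqref{eq:toen}. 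To repair your argument you would either have to upgrade $\delta$ to a full coherent descent datum (essentially redoing To\"en's work) or, more economically, replace the descent step by this cohomological computation plus injectivity of $\psi$, as the paper does.
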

\begin{proof} Let $(Y,L,\overline{\phi})$ be a Picard data for $A$ satisfying the above condition(s). In this case, we can replace the $L$ in the isomorphism $\phi:L \otimes^{\bf L}_{Y_2}P_1 \stackrel{\sim}{\to} P_2$ by $K_1 \otimes^{\bf L}_{Y_2}K_2^{-1}$. Via the above procedure, using the lower row of \eqref{eq:cond2}, one observes that $\psi(A)$ is trivial. Therefore, the injectivity of \eqref{eq:toen} implies that $A$ is trivial. 

Let us now prove the converse. Assume that $A$ is trivial. In this case, $A\simeq \REnd_X(Q)$ with $Q$ a perfect complex
on $X$. Consider the isomorphisms
\begin{equation}\label{eq:iso-B2-1}
\REnd_Y(P)\simeq f^\ast(A)=\REnd_X(f^\ast(Q))\,.
\end{equation}
Thanks to Morita theory, \eqref{eq:iso-B2-1} is induced from an isomorphism $f^\ast (Q)\simeq K\otimes^{\bf L}_Y P$ with $K \in \DPic(Y)$. Since $(f^\ast(Q))_1\simeq Q_{12} \simeq (f^\ast(Q))_2$ on $Y_2$, we hence obtain an isomorphism $K_1\otimes^{\bf L}_{Y_2} P_1\simeq K_2\otimes^{\bf L}_{Y_2} P_2$. This implies that $L\simeq K_1\otimes^{\bf L}_{Y_2} K_2^{-1}$. The commutativity of \eqref{eq:cond2} follows from Morita theory. This concludes the proof.
\end{proof}
\begin{definition}
Given a dg Azumaya algebra $A$ over $X$, let $\deg(A):=\sqrt{\mathrm{rank}(A)}$, where $\mathrm{rank}(A)$ is defined by the usual alternating sum of ranks in each degree.
\end{definition}
We consider $\deg(A)$ as a locally constant function on $X$.
Note that the local {\'e}tale triviality of $A$ implies that $\deg(A)$ takes integer values. The following result plays a key role in the case of ordinary Azumaya algebras; see \cite[\S IV Thm.~6.1]{Knus}. 
\begin{theorem} 
\label{thm:trivbrclass}
The dg Azumaya algebra $
A^{\otimes \deg(A)}
$ is trivial.
\end{theorem}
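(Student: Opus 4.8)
The plan is to establish the theorem by verifying the criterion of Lemma~\ref{lem:bclass} for the dg Azumaya algebra $A^{\otimes\deg(A)}$, the extra datum there being furnished by the determinant of a perfect complex. First I would fix an \'etale cover $f\colon Y\to X$ together with a perfect complex $P$ on $Y$ and an isomorphism $f^{\ast}(A)\simeq\REnd_{Y}(P)$, and pass to the associated Picard data $(Y,L,\bar{\phi})$ coming from the isomorphism $\gamma\colon L\otimes^{\bf L}_{Y_{2}}P_{1}\xrightarrow{\sim}P_{2}$ of \eqref{eq:startiso} and the compatibility diagram \eqref{eq:foundation}. Since $\REnd_{Y}(P)\simeq\REnd_{Y}(P[1])$, after replacing $P$ by a shift on each connected component of $Y$ I may assume that the locally constant function $\operatorname{rank}(P)$ is non-negative; as $\operatorname{rank}(\REnd_{Y}(P))=\operatorname{rank}(P)^{2}$, this forces $\operatorname{rank}(P)=\deg(A)$.

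Next, because $\REnd_{Y}(-)$ is symmetric monoidal, one has $f^{\ast}(A^{\otimes\deg(A)})\simeq\REnd_{Y}(P^{\otimes\deg(A)})$, and raising the data $\gamma,\phi$ to the $\deg(A)$-th tensor power shows that $A^{\otimes\deg(A)}$ admits the set of Picard data $(Y,\,L^{\otimes\deg(A)},\,-)$, whose cocycle is $\phi^{\otimes\deg(A)}$. I would then bring in the determinant functor $\delta\colon\operatorname{Perf}(Y)^{\simeq}\to\DPic(Y)$: it is symmetric monoidal, commutes with pullback, and satisfies $\delta(M\otimes^{\bf L}N)\simeq\delta(M)^{\otimes\operatorname{rank}(N)}\otimes\delta(N)$ whenever $N$ is invertible. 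Setting $K:=\delta(P)\in\DPic(Y)$, so that $K_{i}=\delta(P_{i})$ on $Y_{2}$, and applying $\delta$ to $\gamma$ — using $\operatorname{rank}(P_{1})=\deg(A)$ — produces an isomorphism $\theta\colon L^{\otimes\deg(A)}\xrightarrow{\sim}K_{2}\otimes^{\bf L}_{Y_{2}}K_{1}^{-1}$ of exactly the shape required by Lemma~\ref{lem:bclass}. Finally, applying the symmetric monoidal, $\pr_{ij}$-compatible functor $\delta$ to every object and arrow of \eqref{eq:foundation} turns that diagram into the commutativity of \eqref{eq:cond2} for the data $(L^{\otimes\deg(A)},\phi^{\otimes\deg(A)},\theta,K)$; Lemma~\ref{lem:bclass} then yields that $A^{\otimes\deg(A)}$ is trivial.

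I expect the main obstacle to be the careful treatment of the determinant functor $\delta$ with values in $\DPic(Y)$ rather than in ordinary line bundles: one must keep track of the $\bbZ$-grading factor of $\DPic(Y)$ and of the Koszul signs in its symmetric monoidal structure, and — most importantly — check that the isomorphisms output by $\delta$ are the Morita-canonical ones, so that $\delta$ genuinely carries diagram \eqref{eq:foundation} to diagram \eqref{eq:cond2}. As an alternative route, since the map $\psi$ of \eqref{eq:toen} is an injective group homomorphism it suffices to prove $\deg(A)\cdot\psi(A)=0$: the $H^{2}_{\mathrm{et}}(X,\bbG_{m})$-component is annihilated by $\deg(A)$ via the standard argument attached to the inclusion $\mu_{\deg(A)}\hookrightarrow\bbG_{m}$ applied to the rank-$\deg(A)$ local model $P$, and the $H^{1}_{\mathrm{et}}(X,\bbZ)$-component vanishes because $\deg(A)\cdot\underline{n}$ is the shift class of $\delta(P_{2})\otimes^{\bf L}_{Y_{2}}\delta(P_{1})^{-1}=K_{2}\otimes^{\bf L}_{Y_{2}}K_{1}^{-1}$, hence a coboundary.
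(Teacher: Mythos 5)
Your argument is correct and is essentially the paper's own proof: you verify the criterion of Lemma~\ref{lem:bclass} for the Picard data $(Y,L^{\otimes \deg(A)},\bar{\phi}^{\otimes \deg(A)})$ of $A^{\otimes \deg(A)}$ by taking $K:=\det(P)$ and applying the determinant functor to the isomorphism \eqref{eq:startiso} and to the diagram \eqref{eq:foundation}, which after the evident rearrangement yields exactly \eqref{eq:cond2}. The only slip is in your stated rule $\delta(M\otimes^{\bf L}N)\simeq\delta(M)^{\otimes\operatorname{rank}(N)}\otimes\delta(N)$ for $N$ invertible, which omits the exponent $\operatorname{rank}(M)$ on $\delta(N)$; the application you actually make (giving $\theta:L^{\otimes\deg(A)}\stackrel{\sim}{\to}K_{2}\otimes^{\bf L}_{Y_{2}}K_{1}^{-1}$ from $\operatorname{rank}(P_{1})=\deg(A)$) uses the correct formula and agrees with the paper's computation.
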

\begin{proof} Let $d:=\deg A$ and $B:=A^{\otimes d}$. If $(Y,L,\overline{\phi})$ is a set of Picard data for $A$, we need to show that the set of Picard data $(Y,L^{\otimes d},\overline{\phi}^{\otimes d})$ for $B$ satisfies the conditions of Lemma \ref{lem:bclass}. By taking the determinant of \eqref{eq:startiso} we obtain an isomorphism
\begin{eqnarray*}
\det(\gamma):L^{\otimes d}\otimes^{\bf L}_{Y_2} \det(P_1)\stackrel{\sim}{\to} \det(P_2) & \Leftrightarrow & \det(\gamma):L^{\otimes d}\stackrel{\sim}{\to} \det(P_2)\otimes^{\bf L}_{Y_2} \det(P_1)^{-1}\,.
\end{eqnarray*}
Similarly, by first taking the determinant of \eqref{eq:foundation}, and then tensoring the result with $\det P_1^{-1}$, we obtain the following commutative diagram:
\[
\xymatrix@C=0.01em@R=1.5em{
L_{23}^{\otimes d}\otimes^{\bf L}_{Y_3} 
L_{12}^{\otimes d}\ar@{.>}[rrrrdd]\ar[d]_{\phi^{\otimes d}\otimes\Id} \ar[rrrr]^-{\Id\otimes \det(\gamma_{12})}&&&&  L_{23}^{\otimes d}\otimes^{\bf L}_{Y_3} \det(P_2)\otimes^{\bf L}_{Y_3} \det(P_1)^{-1}\ar[dd]^{\det(\gamma_{23})\otimes \Id}\\
L_{13}^{\otimes d} \ar[d]_-{\det(\gamma_{13})}&& &&\\
\det(P_3)\otimes^{\bf L}_{Y_3} \det(P_1)^{-1}\ar[rrrr]_-{\sim}&&&& (\det(P_3)\otimes^{\bf L}_{Y_3} \det(P_2))^{-1}\otimes^{\bf L}_{Y_3}
(\det(P_2)\otimes^{\bf L}_{Y_3} \det(P_1)^{-1})
}
\]
Note that the lower triangle is precisely \eqref{eq:cond2} with $L$ replaced by $L^{\otimes d}$
and $K$ by $\det(P)$. This concludes the proof.
\end{proof}
The following result sheds some new light on dg Azumaya algebras.
\begin{theorem}\label{thm:rank}
Let $A$ be a dg Azumaya algebra of rank $(r_1,\ldots, r_s)$. When $r_1, \ldots, r_s \neq 0$, $A$ is isomorphic in  $\Hmo(k)$ to an ordinary Azumaya algebra.
\end{theorem}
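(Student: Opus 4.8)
The plan is to leverage Theorem \ref{thm:trivbrclass} together with To\"en's injective invariant $\psi$ of \eqref{eq:toen}, and to read off the conclusion from the two numerical pieces of $\psi$. Write $X=\Spec(k)$, and let $\psi\colon\dBr(k)\to H^1_{\mathrm{et}}(X,\bbZ)\times H^2_{\mathrm{et}}(X,\bbG_m)$ be the injective homomorphism of abelian groups \eqref{eq:toen}. Since the constructions involved respect the decomposition of $X$ into its (finitely many) connected components, and since being isomorphic in $\Hmo(k)$ to an ordinary Azumaya algebra can be tested componentwise, I may work on a single component and assume that $\deg(A)$ equals a fixed positive integer $d$; indeed, local \'etale triviality of $A$ (together with $\mathrm{rank}(\REnd(P))=\mathrm{rank}(P)^2$) forces $\mathrm{rank}(A)$ to be a perfect square on each component, so the hypothesis $r_i\neq 0$ says precisely that $\deg(A)=\sqrt{\mathrm{rank}(A)}$ is everywhere a positive integer.

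First I would apply Theorem \ref{thm:trivbrclass}: the dg Azumaya algebra $A^{\otimes d}$ is trivial, whence $d\cdot\psi(A)=\psi(A^{\otimes d})=0$. Now I split into the two factors. On the $H^1$-factor: the group $H^1_{\mathrm{et}}(X,\bbZ)$ is torsion-free — this follows from the piece $H^0_{\mathrm{et}}(X,\bbQ)\to H^0_{\mathrm{et}}(X,\bbQ/\bbZ)\to H^1_{\mathrm{et}}(X,\bbZ)\to H^1_{\mathrm{et}}(X,\bbQ)$ of the long exact sequence of $0\to\bbZ\to\bbQ\to\bbQ/\bbZ\to 0$, since the first arrow is surjective ($X$ has finitely many components) and $H^1_{\mathrm{et}}(X,\bbQ)$ is a $\bbQ$-vector space. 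Hence the $H^1$-component of $\psi(A)$ vanishes. On the $H^2$-factor: $d\cdot\psi(A)=0$ shows that the $H^2$-component $\alpha\in H^2_{\mathrm{et}}(X,\bbG_m)$ is killed by $d$, in particular torsion. So $\psi(A)=(0,\alpha)$ with $\alpha$ a torsion class.

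To conclude, I would invoke that, because $X=\Spec(k)$ is affine (hence admits an ample line bundle), the natural map $\Br(X)\to H^2_{\mathrm{et}}(X,\bbG_m)_{\mathrm{tors}}$ is an isomorphism (Gabber's theorem). This produces an ordinary Azumaya $k$-algebra $A'$ with Brauer class $\alpha$; as an ordinary Azumaya algebra is \'etale-locally of the form $\REnd(\cO^{\,n})$ with $\cO^{\,n}$ concentrated in degree $0$, the $H^1$-component of $\psi(A')$ vanishes, so $\psi(A')=(0,\alpha)=\psi(A)$. Injectivity of $\psi$ then gives $[A]=[A']$ in $\dBr(k)$, i.e.\ $A$ is isomorphic to $A'$ in $\Hmo(k)$, as required.

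The only non-formal input is the surjectivity $\Br(X)\twoheadrightarrow H^2_{\mathrm{et}}(X,\bbG_m)_{\mathrm{tors}}$ for affine $X$; everything else is bookkeeping of the two summands of $\psi$. (Conceptually, this is the refinement of Example \ref{ex:Azumaya}(ii)--(iii): the exotic non-ordinary dg Azumaya algebras, detected by a non-torsion class in $H^2_{\mathrm{et}}(X,\bbG_m)$, must have $\deg=0$ on some component, precisely by Theorem \ref{thm:trivbrclass}.) If one prefers to avoid citing Gabber in the form above, one can instead unwind the Picard data $(Y,L,\bar\phi)$ of $A$: the vanishing of the $H^1$-part allows one to modify the perfect complex $P$ with $f^\ast A\simeq\REnd_Y(P)$ by a locally constant shift so that $L$ becomes an honest line bundle on $Y_2$, after which $(L,\bar\phi)$ is a $\bbG_m$-valued $2$-cocycle killed by $d$; extracting from it an honest $\mathrm{PGL}$-torsor — equivalently, an ordinary Azumaya algebra — of the prescribed class is, however, again exactly Gabber's theorem, so this route does not really shorten the argument.
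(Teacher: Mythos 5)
Your proposal is correct and follows essentially the same route as the paper's proof: reduce to an indecomposable $\Spec(k)$, use Theorem \ref{thm:trivbrclass} to see that $\psi(A)$ is torsion, kill the $H^1_{\mathrm{et}}(\Spec(k),\bbZ)$-component via the exact sequence coming from $0\to\bbZ\to\bbQ\to\bbQ/\bbZ\to 0$, and then conclude with Gabber's theorem together with the injectivity of \eqref{eq:toen}. The only difference is that you spell out the last step (producing an ordinary Azumaya algebra $A'$ with $\psi(A')=\psi(A)$ and invoking injectivity), which the paper leaves implicit.
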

\begin{proof}
We may assume without loss of generality that $\mathrm{Spec}(k)$ is indecomposable. In this case, $s=1$; let $r:=r_1$. Thanks to Theorem \ref{thm:trivbrclass}, the class $\psi(A)\in H^1_{\operatorname{et}}(\mathrm{Spec}(k),\bbZ) \times H^2_{\operatorname{et}}(\mathrm{Spec}(k),\bbG_m)
$ is $r$-torsion. Making use of the short exact sequence of constant sheaves $0\to \underline{\bbZ}\to\underline{\bbQ} \to \underline{\bbQ}/\underline{\bbZ}\to 0$, we observe that $H^1_{\text{et}}(\mathrm{Spec}(k),\bbZ)$ is contained in $H^1_{\text{et}}(\mathrm{Spec}(k),\bbQ)$. In particular, it is torsion-free. This hence implies that $\psi(A)\in H^2_{\text{et}}(\mathrm{Spec}(k),\bbG_m)_{\text{tors}}$. Finally, making use of Gabber's result \cite[Thm II.1]{Gabber}, we conclude that $A$ is isomorphic in $\Hmo(k)$ to an ordinary Azumaya algebra.
\end{proof}
\begin{corollary}\label{cor:new}
Let $A_\alpha$ be a dg Azumaya algebra as in Example \ref{ex:Azumaya}(iii) of rank $(r_1,\ldots, r_s)$. Then, there exists an integer $i \in \{1, \ldots, s\}$ such that $r_i=0$.
\end{corollary}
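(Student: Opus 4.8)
The plan is to obtain Corollary~\ref{cor:new} as the contrapositive of Theorem~\ref{thm:rank}. Recall from Example~\ref{ex:Azumaya}(iii) that, by its very construction, the dg Azumaya algebra $A_\alpha$ represents a non-torsion class $\alpha \in H^2_{\operatorname{et}}(\mathrm{Spec}(k),\bbG_m)$ and is \emph{not} isomorphic in $\Hmo(k)$ to an ordinary Azumaya algebra. Assume, for the sake of contradiction, that $r_i \neq 0$ for every $i \in \{1,\ldots,s\}$. Then Theorem~\ref{thm:rank} applies verbatim to $A_\alpha$ and produces an isomorphism in $\Hmo(k)$ between $A_\alpha$ and an ordinary Azumaya algebra, contradicting the choice of $A_\alpha$. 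Hence $r_i = 0$ for some $i$.

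If one prefers an argument that does not invoke Theorem~\ref{thm:rank} as a black box, one can instead reuse its two inputs directly. Still assuming all $r_i \neq 0$, the locally constant function $\deg(A_\alpha) = \sqrt{\mathrm{rank}(A_\alpha)}$ takes strictly positive integer values, so $d := \deg(A_\alpha)$ is a nowhere-zero locally constant function on $\mathrm{Spec}(k)$. By Theorem~\ref{thm:trivbrclass} the dg Azumaya algebra $A_\alpha^{\otimes d}$ is trivial; since the map $\psi$ of \eqref{eq:toen} is a group homomorphism, it follows that $\psi(A_\alpha)$ is $d$-torsion inside $H^1_{\operatorname{et}}(\mathrm{Spec}(k),\bbZ) \times H^2_{\operatorname{et}}(\mathrm{Spec}(k),\bbG_m)$. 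But the second component of $\psi(A_\alpha)$ is precisely the class $\alpha$, which is non-torsion by hypothesis --- a contradiction. Either route forces the existence of an index $i$ with $r_i = 0$.

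There is essentially no obstacle here: the whole point has already been done in Theorem~\ref{thm:rank} (equivalently, in Theorem~\ref{thm:trivbrclass} together with the injectivity of To\"en's map $\psi$). The only bookkeeping worth a sentence is the reduction to the indecomposable case. Writing $\mathrm{Spec}(k) = \coprod_{i=1}^s \mathrm{Spec}(k_i)$, the algebra $A_\alpha$ decomposes accordingly and $\alpha$ restricts to classes $\alpha_i \in H^2_{\operatorname{et}}(\mathrm{Spec}(k_i),\bbG_m)$; saying that $\alpha$ is non-torsion means that $\alpha_i$ is non-torsion for at least one $i$, and running the degree argument on that single component yields $r_i = 0$ there, which is exactly the statement to be proved. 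This is the same per-component analysis already carried out in the proof of Theorem~\ref{thm:rank}.
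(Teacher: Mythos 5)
Your first argument is exactly the paper's (implicit) proof: Corollary~\ref{cor:new} is stated without a separate argument precisely because it is the immediate contrapositive of Theorem~\ref{thm:rank}, given that $A_\alpha$ is by construction not isomorphic in $\Hmo(k)$ to an ordinary Azumaya algebra. Your alternative route via Theorem~\ref{thm:trivbrclass}, the injectivity/homomorphism property of $\psi$, and the non-torsion of $\alpha$ (run componentwise) is also sound and even bypasses Gabber's theorem, but the contrapositive already suffices.
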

Corollary \ref{cor:new} shows that the dg Azumaya algebras $A_\alpha$ associated to non-torsion {\'e}tale cohomology classes $\alpha \in H^2_{\mathrm{et}}(\mathrm{Spec}(k),\bbG_m)$ have always trivial rank.
\subsection*{Noncommutative motives}
By combining the above Theorem \ref{thm:rank} with \cite[Thm.~2.1]{TV}, we obtain the following computation:
\begin{corollary}\label{cor:Azumaya}
Given a dg Azumaya algebra $A$ as in Theorem \ref{thm:rank}, we have a canonical isomorphism $U(k)_R \simeq U(A)_R$ for every commutative ring $R$ containing $1/r$ with $r:=r_1 \times \cdots \times r_s$.
\end{corollary}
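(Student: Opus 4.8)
The plan is to reduce, via Theorem~\ref{thm:rank}, to the case of an ordinary Azumaya algebra, and then to invoke the computation \cite[Thm.~2.1]{TV}.

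\smallskip

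\noindent\emph{Step 1 (passing to an ordinary Azumaya algebra).} Since the rank $(r_1,\dots,r_s)$ of $A$ has all its entries nonzero, Theorem~\ref{thm:rank} supplies an ordinary Azumaya algebra $A'$ over $k$ (in the sense of \cite{Grothendieck}, i.e. $k$-flat and concentrated in degree zero) together with an isomorphism $A\simeq A'$ in $\Hmo(k)$. The universal additive invariant with $R$-coefficients factors as $\dgcat(k)\to\Hmo(k)\to\Hmo_0(k)\to\Hmo_0(k)_R$, so $U(-)_R$ sends isomorphisms of $\Hmo(k)$ to isomorphisms; hence $U(A)_R\simeq U(A')_R$ in $\Hmo_0(k)_R$, and it remains to produce a canonical isomorphism $U(k)_R\simeq U(A')_R$. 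Observe that $A'$ has the same rank $(r_1,\dots,r_s)$ as $A$, so $\deg(A')$ takes the value $\sqrt{r_i}$ on the $i$-th component of $\mathrm{Spec}(k)$.

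\smallskip

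\noindent\emph{Step 2 (the computation for an ordinary Azumaya algebra).} Decompose $k\simeq k_1\times\cdots\times k_s$ with each $\mathrm{Spec}(k_i)$ connected, and correspondingly $A'\simeq A'_1\times\cdots\times A'_s$ with $A'_i$ an ordinary Azumaya $k_i$-algebra of constant rank $r_i=\deg(A'_i)^2$. From $1/r\in R$ with $r=r_1\cdots r_s$ one deduces $1/r_i\in R$, so $\deg(A'_i)$ is invertible in $R$, for every $i$. Applying \cite[Thm.~2.1]{TV} over each $k_i$ gives a canonical isomorphism $U(k_i)_R\simeq U(A'_i)_R$. Since $U(-)_R$ converts finite products of dg categories into direct sums (\S\ref{sub:universal}, which holds verbatim when $k$ is a commutative ring), summing these isomorphisms yields $U(k)_R\simeq\bigoplus_iU(k_i)_R\simeq\bigoplus_iU(A'_i)_R\simeq U(A')_R$. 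Composing with the isomorphism of Step~1 produces the asserted isomorphism $U(k)_R\simeq U(A)_R$.

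\smallskip

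\noindent\emph{Main difficulty.} There is no serious obstacle here: the real input is Theorem~\ref{thm:rank} (and, behind it, Gabber's theorem) together with \cite[Thm.~2.1]{TV}. The points that require attention are that \cite[Thm.~2.1]{TV}, recalled in the proof of Lemma~\ref{lem:Brauer2} only in its special form for central simple algebras over a field, must here be used in its general Azumaya-over-a-commutative-ring formulation, with the invertibility hypothesis phrased through the rank (equivalently, the degree); and that the several identifications invoked above ($A\simeq A'$ in $\Hmo(k)$, $k\simeq\prod_ik_i$, $A'\simeq\prod_iA'_i$) be natural enough for the resulting isomorphism $U(k)_R\simeq U(A)_R$ to merit the adjective ``canonical''.
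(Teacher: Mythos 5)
Your route is the same as the paper's: use Theorem~\ref{thm:rank} to replace $A$ by an ordinary Azumaya algebra $A'$ isomorphic to it in $\Hmo(k)$, note that $U(-)_R$ factors through $\Hmo(k)$, decompose over the connected components of $\mathrm{Spec}(k)$, and apply \cite[Thm.~2.1]{TV} componentwise; the paper's proof is exactly this one-line combination of Theorem~\ref{thm:rank} and \cite[Thm.~2.1]{TV}, and your Step~1 and the additivity bookkeeping in Step~2 are fine.

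The one step that does not hold as written is the parenthetical ``Observe that $A'$ has the same rank $(r_1,\dots,r_s)$ as $A$''. An isomorphism in $\Hmo(k)$ is a derived Morita equivalence, and rank is not a Morita invariant: $k$ and $M_{n\times n}(k)$ are isomorphic in $\Hmo(k)$ but have ranks $1$ and $n^2$. Moreover, the $A'$ produced in the proof of Theorem~\ref{thm:rank} comes from Gabber's theorem \cite{Gabber}, which yields \emph{some} ordinary Azumaya representative of the class $\psi(A)$ with no control whatsoever on its degree, so you cannot simply feed $r_i$ (equivalently $\deg(A'_i)$) into the invertibility hypothesis of \cite[Thm.~2.1]{TV} on the $i$-th component. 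What you actually know is Brauer-class information: by Theorem~\ref{thm:trivbrclass} the class $\psi(A)$ is killed by $\deg(A)$, i.e.\ its period divides $\sqrt{r_i}$ on each component and is therefore invertible in any $R\ni 1/r$. The justification that \cite[Thm.~2.1]{TV} applies with only $1/r\in R$ must pass through this (period/index-type) information — e.g.\ by choosing, or knowing one may choose, an ordinary representative whose rank has the same prime factors as $r$, or by invoking the form of that theorem whose hypothesis depends only on the class rather than on the rank of the chosen representative. So: keep the structure of your argument, but delete the ``same rank'' observation and replace it by an argument of this kind; as it stands, that sentence is the load-bearing step and it is false in general.
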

Intuitively speaking, Corollary \ref{cor:Azumaya} shows that the difference between the noncommutative motives of $A$ and $k$ is a torsion phenomenon. As the next result shows, this is {\em not} the case when we consider the dg Azumaya algebras of Example \ref{ex:Azumaya}(iii): 
\begin{theorem}\label{thm:Azumaya}
  Let $A$ be a dg Azumaya
  algebra which is not isomorphic in $\Hmo(k)$ to an ordinary Azumaya
  algebra. When $k$ is noetherian\footnote{It is likely that this hypothesis is superfluous.}, we have $U(k)_\bbQ \not\simeq
  U(A)_\bbQ$.
\end{theorem}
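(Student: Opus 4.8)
The plan is to argue by contradiction: assume $U(A)_\bbQ\simeq U(k)_\bbQ$ and deduce that $A$ is isomorphic in $\Hmo(k)$ to an ordinary Azumaya algebra. First I would recast the conclusion cohomologically. Since $k$ is noetherian, $\Spec(k)$ has finitely many connected components, $k$ splits as the corresponding finite product, and $\Hmo_0(k)_\bbQ$ together with the pair $(U(k),U(A))$ splits accordingly; as ``isomorphic to an ordinary Azumaya algebra'' is a componentwise condition, we may assume $\Spec(k)$ connected. Next, recall the ingredients of the proof of Theorem \ref{thm:rank}: $H^1_{\operatorname{et}}(\Spec(k),\bbZ)$ is torsion-free, and by Gabber's theorem every torsion class of $H^2_{\operatorname{et}}(\Spec(k),\bbG_m)$ is represented by an ordinary Azumaya algebra. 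Combined with the injectivity of To{\"e}n's map \eqref{eq:toen}, this shows that a dg Azumaya algebra is isomorphic in $\Hmo(k)$ to an ordinary one if and only if its class in $\dBr(k)$ is torsion. Hence it suffices to prove that $U(A)_\bbQ\simeq U(k)_\bbQ$ forces $[A]$ to be torsion in $\dBr(k)$.

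Since $A$ is a dg Azumaya algebra, $A^\op\otimes^{\bf L} A\simeq \REnd_k(A)$, and because $A$ is a compact generator of $\cD(k)$ the algebra $\REnd_k(A)$ is Morita equivalent to $k$; therefore $U(A)\otimes U(A^\op)\simeq U(k)$, so $U(A)$ is a $\otimes$-invertible object of $\Hmo_0(k)$ and $B\mapsto U(B)$ defines a homomorphism $\dBr(k)\to\Pic(\Hmo_0(k))$. What remains is to show that the rationalized composite $\dBr(k)\to\Pic(\Hmo_0(k))\to\Pic(\Hmo_0(k)_\bbQ)$ has torsion kernel. For this I would produce a realization of rational noncommutative motives that detects the derived Brauer class: using the explicit local description recalled before Lemma \ref{lem:bclass} --- an {\'e}tale cover $f:Y\to X$ trivializing $A$ together with its Picard datum $(Y,L,\overline{\phi})$ and the associated pair $(\underline{n},\tilde{\phi})\in H^1_{\operatorname{et}}(X,\bbZ)\times H^2_{\operatorname{et}}(X,\bbG_m)$ --- I would track this descent datum through $U$ and an {\'e}tale (or $\ell$-adic) comparison functor on $\Hmo_0(k)_\bbQ$, and argue that the image of $(\underline{n},\tilde{\phi})$ in rational {\'e}tale cohomology is an invariant of $U(A)_\bbQ$. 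Then $U(A)_\bbQ\simeq U(k)_\bbQ$ makes this rational image vanish, which by the torsion-freeness recorded above forces $(\underline{n},\tilde{\phi})$ itself to be torsion, i.e. $[A]$ torsion in $\dBr(k)$. Noetherianity is used throughout this step for the necessary finiteness (quasi-compactness, Gabber's theorem, the comparison isomorphisms), which is presumably why the hypothesis can ultimately be removed.

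The hard part is exactly this construction: isolating the invariant of $U(-)_\bbQ$ that remembers the non-torsion part of the derived Brauer class. The naive candidates all fail --- the rank of $A$ may vanish (Corollary \ref{cor:new}), the Hochschild and cyclic-type invariants of a dg Azumaya algebra coincide with those of $k$, and the categorical dimension of $U(A)$ in $\Hmo_0(k)$ is again $[\HH(k)]$ --- so the difference between ordinary and non-ordinary dg Azumaya algebras is genuinely a descent phenomenon, visible only after a cohomological realization. Granting that realization, the proof concludes as indicated: $[A]$ is torsion, hence $A$ is isomorphic in $\Hmo(k)$ to an ordinary Azumaya algebra, contradicting the hypothesis.
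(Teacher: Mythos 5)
Your reductions at the start are fine and consistent with the paper: the equivalence ``$A$ is isomorphic in $\Hmo(k)$ to an ordinary Azumaya algebra iff its class in $\dBr(k)$ is torsion'' does follow from the injectivity of To{\"e}n's map \eqref{eq:toen}, the torsion-freeness of $H^1_{\operatorname{et}}(\Spec(k),\bbZ)$ and Gabber's theorem (these are exactly the ingredients of Theorem \ref{thm:rank}), and the observation that $U(A)$ is $\otimes$-invertible is correct. But the argument then rests entirely on the existence of an ``{\'e}tale (or $\ell$-adic) realization'' of $\Hmo_0(k)_\bbQ$ that remembers the pair $(\underline{n},\tilde{\phi})$, i.e.\ a functor on rational noncommutative motives detecting the non-torsion part of the derived Brauer class. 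You never construct this realization, and you yourself identify it as ``the hard part''; nothing in the paper (or in the literature it cites) provides such a functor, and it is far from clear one exists, since---as you note---rank, Hochschild-type invariants and the categorical dimension all fail to see the class. So the proposal reduces the theorem to an unproved statement that is essentially equivalent in difficulty to the theorem itself; this is a genuine gap, not a fillable routine step.

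The paper's proof avoids any cohomological realization and works at the level of bimodules. From $U(k)_\bbQ\simeq U(A)_\bbQ$ and the definition of $\Hom$'s in $\Hmo_0(k)_\bbQ$ as rationalized Grothendieck groups, one clears denominators to obtain $P\in\cD_c(A)$, $Q\in\cD_c(A^\op)$ and integers $m,n>0$ with $[P\otimes^{\bf L}_A Q]=n\cdot[k]$ and $[Q\otimes^{\bf L} P]=m\cdot[A]$. Base-changing along an {\'e}tale extension $k\to k'$ that splits $A$ (Morita bimodules $R,S$), one sees that $P'\otimes^{\bf L}_{A'}S$ has nonzero rank, hence is a compact generator of $\cD(k')$ by Lemma \ref{lem:rank} (this is where the noetherian hypothesis enters, via Neeman's theorem and Nakayama---not via comparison isomorphisms); then $P'$ generates $\cD(A')$, and by faithfully flat descent (Lemma \ref{lem:extension}) $P$ generates $\cD(A)$. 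Replacing $A$ by the Morita-equivalent dg Azumaya algebra $B:={\bf R}\mathrm{End}_A(P)$, whose rank is nonzero, Theorem \ref{thm:rank} shows $B$, hence $A$, is isomorphic in $\Hmo(k)$ to an ordinary Azumaya algebra, the desired contradiction. If you want to salvage your outline, the missing ``invariant'' should be replaced by exactly this rank-and-generation argument.
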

\begin{proof}
Let us assume that $U(k)_\bbQ \simeq U(A)_\bbQ$. Thanks to the construction of the $\bbQ$-linear category $\Hmo_0(k)_\bbQ$ (see \S\ref{sub:coefficients}) and to the fact that $A$ is smooth and proper (see \cite[Prop.~2.5]{Toen}), there exists a right $A$-module $P \in \cD_c(A)$, a right $A^\op$-module $Q \in \cD_c(A^\op)$, and positive integers $m,n >0$ satisfying the following equalities:
\begin{eqnarray*}
 [P \otimes_A^{\bf L} Q] = n\cdot[k] \in K_0\cD_c(k) &&     [Q \otimes^{\bf L} P] = m\cdot[A] \in K_0\cD_c(A^\op \otimes^{\bf L} A)\,.
\end{eqnarray*}
Now, choose an {\'e}tale extension $k \to k'$ making $A':= A\otimes_k k'$ and $k'$ isomorphic in $\Hmo(k')$; see \cite[Prop.~2.14]{Toen}. Once again by construction of the category $\Hmo(k)$, there exists a right $A'$-module $R \in \cD_c(A')$, a right $A'^\op$-module $S$, and isomorphisms 
\begin{eqnarray*}
S \otimes^{\bf L}_{k'} R \simeq A' \in \cD_c(A'^\op \otimes^{\bf L}A) && R\otimes^{\bf L}_{A'}S \simeq k' \in \cD_c(k')\,,
\end{eqnarray*}
giving rise the following equalities:
\begin{eqnarray*}
[(P'\otimes^{\bf L}_{A'} S)\otimes^{\bf L}_{k'}(R\otimes^{\bf L}_{A'} Q')] = n\cdot [k']  &&
 [(R\otimes^{\bf L}_{A'} Q') \otimes^{\bf L}_{k'} (P'\otimes^{\bf L}_{A'} S)]=  m\cdot [k']\,.
\end{eqnarray*}
This clearly implies that the rank of $P' \otimes_{A'} S$ is non-trivial. Thanks to Lemma \ref{lem:rank} below, $P'\otimes_{A'} S$ is a compact generator
of $\cD(k')$. Hence, since $S$ induces an isomorphism in $\Hmo(k')$ between $A'$ and $k'$, $P'$ is a compact generator of $\cD(A')$. Making use of Lemma \ref{lem:extension} below, we conclude moreover that $P$ is a generator
of $A$. We can therefore consider the dg Azumaya $k$-algebra $B:= {\bf R}\mathrm{End}_A(P)$. Note that $B$ is isomorphic in $\Hmo(k)$ to the dg Azumaya $k$-algebra $A$. Using the equalities
$$ \mathrm{rank}(B) = \mathrm{rank}({\bf R} \mathrm{End}_A(P)) = \mathrm{rank} ({\bf R} \mathrm{End}_{A'}(P')) = \mathrm{rank}({\bf R}\mathrm{End}_{k'}(P' \otimes^{\bf L}_{A'} S))\,,$$
we observe that the rank of $B$ is also non-trivial. The above Theorem \ref{thm:rank} hence implies that $B$ (and consequently $A$) is isomorphic in $\Hmo(k)$ to an ordinary Azumaya algebra. This contradiction achieves the proof.
\end{proof}
\begin{lemma} \label{lem:rank}
Let $k$ be a commutative noetherian ring and $P\in \cD_c(k)$. If $\mathrm{rank}(P)\neq0$, then $P$ is a (compact) generator of $\cD(R)$.
\end{lemma}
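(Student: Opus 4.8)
\emph{Proof proposal.} The plan is to reduce this to the classical criterion that a perfect complex over a Noetherian ring whose support is the whole spectrum is a compact generator of its (unbounded) derived category. First I would pin down the meaning of the hypothesis. Since $k$ is Noetherian, $\Spec(k)$ has finitely many connected components and the function $\mathfrak{p}\mapsto \chi\big(P\otimes^{\bf L}_k\kappa(\mathfrak{p})\big)$ — the alternating sum of the ranks of a finite free model of $P$ near $\mathfrak{p}$ — is locally constant; this locally constant function is $\mathrm{rank}(P)\colon\Spec(k)\to\bbZ$, and ``$\mathrm{rank}(P)\neq 0$'' must be read as: it is \emph{nowhere} zero (the reading ``nonzero somewhere'' would make the lemma false as soon as $\Spec(k)$ is disconnected, e.g. $k=\bbZ\times\bbZ$, $P=\bbZ\times 0$). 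Decomposing $k$ as a finite product of rings with connected spectrum splits $\cD(k)$ as a product and $P$ correspondingly, so one reduces at once to the case $\Spec(k)$ connected, where $\mathrm{rank}(P)$ is a single nonzero integer $r$.

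Next I would show that $\Supp(P)=\Spec(k)$, where $\Supp(P)=\{\mathfrak{p}\in\Spec(k) : P_\mathfrak{p}\not\simeq 0\}$. Indeed, for every $\mathfrak{p}$ the object $P\otimes^{\bf L}_k\kappa(\mathfrak{p})$ is a bounded complex of $\kappa(\mathfrak{p})$-vector spaces of Euler characteristic $r\neq 0$, hence not acyclic; therefore $P_\mathfrak{p}\not\simeq 0$, i.e. $\mathfrak{p}\in\Supp(P)$. (For $P$ perfect over a Noetherian ring this residue-field nonvanishing locus coincides with the closed set $\bigcup_i\Supp_k H^i(P)$, but I will not need this refinement.)

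Finally, $P$ is perfect, hence compact in $\cD(k)$, and it remains to invoke that a perfect complex $P$ over a Noetherian ring $k$ with $\Supp(P)=\Spec(k)$ generates $\cD(k)$. One argues as follows. By the Hopkins--Neeman--Thomason classification of thick subcategories of $\cD_c(k)$, the thick subcategory generated by $P$ consists of all $M\in\cD_c(k)$ with $\Supp(M)$ contained in the specialization closure of $\Supp(P)$; since $\Supp(P)=\Spec(k)$, this is all of $\cD_c(k)$, and in particular it contains $k$. By Neeman's theorem that, for a compact object, the thick subcategory it generates equals the intersection of the localizing subcategory it generates with $\cD_c(k)$, we get $k\in\mathrm{Loc}(P)$; hence $\mathrm{Loc}(P)\supseteq\mathrm{Loc}(k)=\cD(k)$, so $P$ generates $\cD(k)$. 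Being also compact, $P$ is a compact generator.

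I expect the only genuine point to watch is the bookkeeping of the first paragraph — matching the invariant called ``rank'' in the statement with $\mathfrak{p}\mapsto\chi(P\otimes^{\bf L}_k\kappa(\mathfrak{p}))$ and reading ``$\neq 0$'' as ``nowhere zero'' — since the derived-category input (the thick subcategory theorem and Neeman's compact-generation lemma) is entirely standard and, crucially, already handles the unbounded target $\cD(k)$. If one prefers to avoid citing the thick subcategory theorem, the conservativity of $\RHom_k(P,-)$ on \emph{bounded} complexes can be obtained directly: writing $\RHom_k(P,N)\simeq P^\vee\otimes^{\bf L}_k N$ with $P^\vee:=\RHom_k(P,k)$ perfect of full support, and tensoring with residue fields, one sees that $\RHom_k(P,N)=0$ forces $N\otimes^{\bf L}_k\kappa(\mathfrak{p})=0$ for every $\mathfrak{p}$, whence $N=0$ when $N$ is bounded; the general (unbounded) case is exactly what the localizing-subcategory formalism supplies for free.
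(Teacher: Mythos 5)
Your proposal is correct and takes essentially the same route as the paper: both arguments come down to checking that $P\otimes^{\bf L}_k \kappa(\mathfrak{p})\neq 0$ for every $\mathfrak{p}\in\Spec(k)$ (the paper via full support plus Nakayama, you via the nonvanishing Euler characteristic) and then invoking Neeman's classification from the chromatic tower paper, which you merely repackage as the thick-subcategory theorem together with $\mathrm{thick}(P)=\mathrm{Loc}(P)\cap\cD_c(k)$ instead of citing the localizing-subcategory statement directly. Your reading of $\mathrm{rank}(P)\neq 0$ as ``nowhere zero'' on $\Spec(k)$ is indeed the intended one, consistent with the componentwise hypothesis $r_1,\ldots,r_s\neq 0$ in force where the lemma is applied.
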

\begin{proof} If $\mathrm{rank}(P)\neq 0$, then the support $\mathrm{supp}(P)$ of $P$ is equal to $\Spec(k)$. It follows then from Nakayama's
lemma that $P\otimes^{\bf L}_k k(\mathfrak{p})\neq 0$ for every $\mathfrak{p}\in \Spec(k)$. Using Neeman's work
\cite[Thm 2.8]{Neeman}, we hence conclude that the triangulated localizing subcategory of $\cD(k)$ generated by $P$ agrees with $\cD(k)$.
\end{proof}
\begin{lemma} 
\label{lem:extension} Let $A$ be a dg $k$-algebra, $P \in \cD_c(A)$, and $k'/k$ a faithfully flat extension. If $P'$ is a generator of $\cD(A')$, then $P$ is a (compact) generator of $\cD(A)$.
\end{lemma}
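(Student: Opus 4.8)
The plan is to exploit that $P$ is already compact (it lies in $\cD_c(A)$), so that ``$P$ is a generator of $\cD(A)$'' is equivalent to the vanishing of its right orthogonal: $\RHom_A(P,M)\simeq 0$ in $\cD(k)$ implies $M\simeq 0$ in $\cD(A)$. (This is the same elementary reduction used in the proof of Lemma~\ref{lem:rank}, via Neeman's results on compactly generated triangulated categories.) So first I would fix $M\in\cD(A)$ with $\RHom_A(P,M)\simeq 0$ and aim to show $M\simeq 0$.

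Next I would base-change along $k\to k'$. Write $A':=A\otimes_k k'$, $M':=M\otimes_k k'$, and $P':=P\otimes_k k'$; since $k'/k$ is faithfully flat we have $\otimes_k k'=\otimes^{\mathbf L}_k k'$, so these are honest objects of $\cD(A')$. The key input is the base-change formula $\RHom_{A'}(P',M')\simeq \RHom_A(P,M)\otimes_k k'$, which holds because $P$ is perfect over $A$: it is immediate for $P=A$ and then propagates through shifts, cones and retracts using exactness of $-\otimes_k k'$. Consequently $\RHom_{A'}(P',M')\simeq 0$. Moreover $P'$ is perfect over $A'$ (base change sends $A\mapsto A'$ and is triangulated and coproduct-preserving, hence preserves compact objects), so by hypothesis $P'$ is a \emph{compact} generator of $\cD(A')$; therefore $M'\simeq 0$, i.e.\ $M\otimes_k k'\simeq 0$.

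Finally I would descend along the faithfully flat extension: for every integer $n$ (and objectwise on $A$) we have $H^n(M)\otimes_k k'\cong H^n(M\otimes_k k')=0$, and faithful flatness forces $H^n(M)=0$. Hence $M\simeq 0$ in $\cD(A)$, so the right orthogonal of $P$ vanishes and $P$ is a compact generator of $\cD(A)$. I do not expect a genuine obstacle here: the only points requiring any care are that base change preserves perfectness and that the perfect base-change formula for $\RHom$ holds, both of which are routine; the conceptual content is just ``acyclicity descends along faithfully flat extensions.''
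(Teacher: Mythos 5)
Your proposal is correct and follows essentially the same route as the paper's proof: base-change the orthogonality relation using flatness of $k'/k$ together with perfectness of $P$ to get $\RHom_{A'}(P',M')=0$, conclude $M'=0$ from the hypothesis on $P'$, and then descend the vanishing along the faithfully flat extension. The extra details you spell out (compactness of $P'$ and the cohomology-level descent $H^n(M)\otimes_k k'=0\Rightarrow H^n(M)=0$) are exactly what the paper leaves implicit.
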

\begin{proof} 
Let $Q \in \cD(A)$ such that $\RHom_A(P,Q)=0$. Since the extension $k'/k$ is flat and $P$ is a perfect complex, we have $\RHom_{A'}(P',Q')=\RHom_A(P,Q)'=0$. Consequently, $M'=0$. Using the fact that the extension $k'/k$ is faithfully flat, we hence conclude that $M=0$.
\end{proof}
\begin{remark}
Let $A$ be a dg Azumaya algebra. Similarly to the case of ordinary Azumaya algebras, we have the following equivalence of categories:
\begin{eqnarray*}\label{eq:equiv-induced}
\cD_c(k) \stackrel{\simeq}{\too} \cD_c(A^\op \otimes^{\bf L} A) && P \mapsto P \otimes^{\bf L} A\,. 
\end{eqnarray*}
Hence, since the Hochschild homology $HH_\ast(A)$ and the Hochschild cohomology $HH^\ast(A)$ of $A$ can be recovered from $\cD_c(A^\op \otimes^{\bf L}A)$, we obtain induced isomorphisms
\begin{eqnarray*}
k \simeq HH_\ast(k) \simeq HH_\ast(A) && k \simeq HH^\ast(k) \simeq HH^\ast(A)\,.
\end{eqnarray*}
Note that Theorem \ref{thm:Azumaya} implies that these isomorphisms are {\em not} motivic, \ie they are not induced from an isomorphism in $\Hmo_0(k)$.
\end{remark}

%\section{Notations}
%\begin{itemize}
%\item $k$ base field.
%\item $R$ commutative ring.
%\item $\cA, \cB$ dg categories; $x, y \in \cA$; $w, z \in \cB$.
%\item $\cC$ arbitrary category; $a, b \in \cC$; $f, g$ morphisms.
%\item $X, Y$ schemes.
%\item $U$ universal additive invariant.
%\item $G$ absolute Galois group.
%\item $A, B, C$ (dg) $k$-algebra; $D_A, D_B$ division algebras
%\item $M$ Chow motive.
%\item $S$ $G$-set; $s_1, s_2, s_3, s' \in S$.
%\item $\alpha: S \to \bbZ$ and $\beta: S \to \bbZ$.
%\item $H,K,L \subseteq G$.
%\item $\alpha \ast \beta$ convolution product; $\alpha \star \beta$ convolution product.
%\item $\overline{g}, \overline{h} \in H \backslash G / K$
%\item $P$ permutation module.
%\item $l/k$ finite Galois field extension.
%\item $M_r(-)$ matrix.
%\item $Z(-)$ center in an algebra.
%\item $D$ division algebra.
%\item $i, j$ indexes.
%\item $\cP$ set of prime numbers; $p, q, p_i, p_i^{\alpha_i} \in \cP$.
%\item $p(t)$ polynomial.
%\item $[f_{i,j}]_{i,j}$ matrix
%\end{itemize}

\end{document}
\end{proof}